\documentclass[review,onefignum,onetabnum]{siamart171218}


\usepackage{geometry}
\geometry{a4paper,scale=0.8}
\usepackage{commath}
\usepackage{tcolorbox}


\usepackage{lipsum}
\usepackage{amsfonts}
\usepackage{graphicx}
\usepackage{epstopdf}
\usepackage{algorithmic}
\usepackage{subcaption}
\usepackage{mathtools}
\usepackage{mathrsfs}  
\usepackage{amsmath}
\usepackage{amssymb}
\ifpdf
  \DeclareGraphicsExtensions{.eps,.pdf,.png,.jpg}
\else
  \DeclareGraphicsExtensions{.eps}
\fi


\newsiamremark{remark}{Remark}
\newsiamremark{hypothesis}{Hypothesis}
\crefname{hypothesis}{Hypothesis}{Hypotheses}
\newsiamthm{claim}{Claim}

\headers{PARAMETERIZED WASSERSTEIN HAMILTONIAN FLOW}{H. Wu, S. Liu, X. Ye, and H. Zhou}

\title{Parameterized Wasserstein Hamiltonian Flow 
}

\author{Hao Wu\thanks{School of Mathematics, Georgia Institute of Technology, Atlanta, GA, USA
 (\email{hwu406@gmail.com}).}
 \and
 Shu Liu\thanks{Department of Mathematics, University of California, Los Angles, CA, USA(\email{shuliu@math.ucla.edu}).}
 \and 
 Xiaojing Ye\thanks{Department of Mathematics and Statistics, Georgia State University, Atlanta, GA, USA(\email{xye@gsu.edu}).}
 \and
 Haomin Zhou\thanks{School of Mathematics, Georgia Institute of Technology, Atlanta, GA, USA(\email{hmzhou@gatech.edu}).}
 }
%

\usepackage{amsopn}

\makeatletter
\newcommand*{\addFileDependency}[1]{
  \typeout{(#1)}
  \@addtofilelist{#1}
  \IfFileExists{#1}{}{\typeout{No file #1.}}
}
\makeatother

\newcommand*{\myexternaldocument}[1]{%
    \externaldocument{#1}%
    \addFileDependency{#1.tex}%
    \addFileDependency{#1.aux}%
}

\ifpdf
\hypersetup{
  pdftitle={Parameterized Wasserstein Hamiltonian Flow},
  pdfauthor={robot}
}
\fi


\myexternaldocument{ex_supplement}


\newcommand{\rhotheta}{\rho_{\theta}}
\newcommand{\thetadot}{\dot{\theta}}

\newtheorem{example}[theorem]{Example}
\newtheorem{assumption}{Assumption}

\begin{document}

\maketitle

\begin{abstract}

 In this work, we propose a numerical method to compute the Wasserstein Hamiltonian flow (WHF), which is a Hamiltonian system on the probability density manifold. Many well-known PDE systems can be reformulated as WHFs. We use parameterized function as push-forward map to characterize the solution of WHF, and convert the PDE to a finite-dimensional ODE system, which is a Hamiltonian system in the phase space of the parameter manifold. We establish theoretical error bounds for the continuous time approximation scheme in Wasserstein metric. For the numerical implementation, neural networks are used as push-forward maps. We design an effective symplectic scheme to solve the derived Hamiltonian ODE system so that the method preserves some important quantities such as Hamiltonian. The computation is done by fully deterministic symplectic integrator without any neural network training. Thus, our method does not involve direct optimization over network parameters and hence can avoid errors introduced by the stochastic gradient descent (SGD) or methods alike, which is usually hard to quantify and measure in practice. The proposed algorithm is a sampling-based approach that scales well to higher dimensional problems. In addition, the method also provides an alternative connection between the Lagrangian and Eulerian perspectives of the original WHF through the parameterized ODE dynamics. 
\end{abstract}

\begin{keywords}
  Hamiltonian dynamics; Wasserstein Hamiltonian flow; Deep learning; Symplectic Euler scheme; Numerical analysis.
\end{keywords}

\section{Introduction}
Wasserstein Hamiltonian flow (WHF) describes the time evolution of a Hamiltonian system on a Wasserstein manifold. It can be formulated as the following first-order Hamiltonian system of dual coordinates on the Wasserstein manifold, which is the space of probability densities equipped with optimal transport distance \cite{villani2009optimal},
\begin{subequations}
\label{WHF in dual coordinates}\begin{align}
    &\partial_t\rho = \frac{\delta}{\delta\Phi}\mathcal{H}(\rho, \Phi),\\
    &\partial_t\Phi = -\frac{\delta}{\delta\rho}\mathcal{H}(\rho, \Phi),
\end{align}
\end{subequations}
with given initial values
\begin{equation}
    \label{eq:WHF-init}
    \rho(0, x)=\rho_0(x) \qquad \mbox{and} \qquad \Phi(0, x)=\Phi_0(x).
\end{equation}
In \eqref{WHF in dual coordinates}, $x\in \mathbb{R}^d$ (our theory applies to any Riemannian manifold $M$ without boundary but for simplicity we only consider $M=\mathbb{R}^{d}$ in this work) and $\frac{\delta}{\delta\rho}$ is the $L^2$ first variation, $\rho$ is the probability density, i.e., a non-negative function with $\int_{\mathbb{R}^d} \rho(x) dx =1$, $\Phi$ is called the dual function, whose gradient provides the vector field transporting $\rho$ on the Wasserstein manifold, and $\rho_0$ and $\Phi_0$ are their initial values respectively.
We consider the following general class of Hamiltonian:
\begin{align}
\label{eq:WHF-H}
\mathcal{H}(\rho, \Phi) = \int_{\mathbb{R}^d} \frac{1}{2}|\nabla \Phi(x)|^2\rho(x) dx+\mathcal{F}(\rho),  
\end{align}
where the first term is the kinetic energy associated with the 2-Wasserstein metric, and $\mathcal{F}(\rho)$ is a potential functional defined on the Wasserstein manifold, which typically is one or a combination of the three terms appeared in the following formula,
\begin{align}
\label{general potential energy}
    \mathcal{F}(\rho) = \int _{\mathbb{R}^d} V(x)\rho(x)dx + \int_{\mathbb{R}^d} U(\rho,x)dx + \iint_{\mathbb{R}^d\times \mathbb{R}^d} W(x-y)\rho(x)\rho(y)dxdy.
\end{align} 
Here the first term is determined by the linear potential $V$, the second one is a nonlinear functional $U$ of $\rho$ such as entropy or Fisher information, and the third is an interactive potential $W$ between particles whose population density is given by $\rho$.
Recent work \cite{chow2020wasserstein} reveals that WHF has deep connections to many well-known partial differential equations (PDEs), such as Wasserstein geodesic, Vlasov and Schr\"odinger equations, just to name a few. WHF provides an alternative framework and a set of new tools originated from optimal transport that potentially can be used to study those PDEs and relevant applications.
However, computation of WHF remains a challenging problem, especially in higher dimensions.
In this work, we develop a computational framework to solve WHF by leveraging several techniques together, including generative models, neural networks, symplectic integrator, and Wasserstein metric on density manifold. In particular, our method is readily scalable to solve WHFs in high-dimensional spaces.

There are two main objectives in this paper. The first one is to derive an effective finite-dimensional approximation of WHF \eqref{WHF in dual coordinates} by using reduced-order models. While the theory developed in this paper is applicable to general reduced-order models, we use a special class known as neural networks due to their excellent empirical approximation power in this study. Our derivation is conducted on a parameter space equipped with an induced Wasserstein metric and a submanifold of the probability density space where a density function $\rho$ is determined by a push-forward map parameterized by a neural network. For convenience, we call the induced Wasserstein metric on the parameter space as the \textit{pullback Wasserstein metric} in this paper. The resulting Hamiltonian system is a set of coupled ordinary differential equations (ODEs) for the neural network parameter and its dual, which is a finite-dimensional approximation to the infinite-dimensional WHF \eqref{WHF in dual coordinates}. We call this new Hamiltonian system the \textit{parameterized} WHF (PWHF). 

Our second objective is to develop a symplectic numerical scheme to solve the PWHF. This is accomplished by introducing an approximation to the pullback Wasserstein metric, which can be efficiently computed by recently developed machine learning techniques, such as residual neural networks \cite{he2016deep, 10.5555/3495724.3495951} or continuous normalizing flows \cite{chen2018neural, grathwohl2018ffjord}. The algorithm is designed by using samples only so that it is readily scalable to high-dimensional problems. Moreover, the proposed method allows effective computation of both particle motion of the Hamiltonian system in the classical phase space and the density evolution on the Wasserstein manifold simultaneously.

Here we highlight several main features of the proposed method:
\begin{itemize}
    \item (Dimension reduction) The PDEs (\ref{WHF in dual coordinates}), which can also be viewed as an infinite-dimensional dynamical system, is effectively approximated by a finite-dimensional ODE system and solved by a customized symplectic numerical scheme.
    \item (Computation effectiveness) A simplified Wasserstein metric is introduced to greatly reduce  the computational cost when compared to that of the pullback Wasserstein metric on the parameter space.
    \item (Training free) The proposed method does \textit{not} involve non-convex optimization algorithms like stochastic gradient descent (SGD) methods which are commonly adopted in machine learning. This avoids errors introduced by those optimization methods that are usually difficult to control and analyze.  

    \item (Symplectic structure preservation) The proposed scheme preserves the symplectic structure of PWHF. Thus the Hamiltonian is conserved, even for large time horizon.
    \item (Error estimation) The convergence of the proposed scheme is guaranteed by error estimates obtained in the Wasserstein metric. 
    \item (Eulerian and Lagrangian formulation) PWHF provides a natural connection bridging the Eulerian and Lagrangian formulations of the underlying Hamiltonian system. 
\end{itemize}

The remainder of the paper is organized as follows. In Section \ref{sec: relatedwork}, we describe the related work to this study. We briefly introduce the WHF, and its equivalent formulations in Section \ref{Background WHF}. We derive the PWHF and its simplified dynamics in Sections \ref{sec: derive theta dynamics} and \ref{sec: derive relaxed PWHF} respectively. In Section \ref{pseudo inverse error analysis}, we show that the density $\rho_{\theta}$ obtained by PWHF is a good approximation to the true solution with provable error bound. Then we provide a numerical algorithm to effectively solve the PWHF, with details about the simplified pullback Wasserstein metric tensor in Section \ref{sec: numerical method}. Numerical results are given in Section \ref{sec:experiments}. We provide a discussion about potential applications of PWHF on other types of problems in Section \ref{sec:discussion} and conclude the paper in Section \ref{sec:conclusion}.

\section{Related work} \label{sec: relatedwork}
The formulation of WHF studied here is first introduced in the paper \cite{chow2020wasserstein} where a derivation framework based on Lagrangian functional for general WHF on density manifold is proposed.  This work also reveals the connections between WHF and several well-known PDEs through examples. Numerical methods have been developed for solving the WHF in recent works \cite{cui2022continuation, cui2022time}, in which the classical finite difference and shooting techniques are used to solve WHF in lower dimensions.

We note that the idea of introducing the metric defined on probability manifold to parameter space originates from \cite{amari1998natural} in which the Fisher metric is discussed. Later, the case of Wasserstein metric was introduced and studied in \cite{li2018natural} and \cite{chen2020optimal}. Soon after, the Wasserstein gradient flows defined on the parameter space of the generative model were introduced in \cite{li2019affine, liliuzhouzha, lin2021wasserstein, liu2022neural}. 

The present study is mostly inspired by a recent work on parametric Fokker-Planck equation (PFPE) \cite{liliuzhouzha, liu2022neural} which establishes a finite-dimensional approximation of the Fokker-Planck equation (FPE) by using push-forward maps, neural networks, and Wasserstein metric. Leveraging the viewpoint that the FPE is the gradient flow of relative entropy functional on Wasserstein manifold \cite{jordan1998variational, doi:10.1081/PDE-100002243}, PFPE is derived by taking the gradient flow of relative entropy projected onto the parameter space equipped with the pullback Wasserstein metric. The resulting PFPE is a system of ODEs for the parameters. Our work follows a similar strategy. We use the same parameter space defined by push-forward maps and neural networks, and a similar pullback Wasserstein metric on the parameter space. Different from PFPE, our aim is to establish PWHF on the parameterized Wasserstein submanifold. In addition, we introduce a new metric, which can be viewed as a close approximation to the one introduced in \cite{liu2022neural}. Such a new metric does not require $\rho_\theta$-weighted Helmholtz projection of vector fields $\partial_{\theta} T_\theta$
in \cite{liu2022neural}. As a result, using the new metric enables us to directly compute PWHF via customized symplectic scheme with provable accuracy and significantly reduced computational cost.

Since introduced in the seminal works \cite{lasry2006jeux, lasry2007mean} to describe the limiting behavior of stochastic differential games, the mean field games (MFGs) have been studied extensively including numerical methods \cite{achdou2010mean, achdou2020mean, camilli2012semi} and machine learning based approaches \cite{pmlr-v162-lauriere22a, pmlr-v130-cui21a, ruthotto2020machine}. The works reported in \cite{ruthotto2020machine, doi:10.1073/pnas.2024713118} provide methods for computing MFG in high dimensional cases. The WHF is closely related to MFG systems at least in their mathematical forms, i.e., the MFG systems with quadratic kinetic energy can be treated as WHFs with boundary conditions.

Hamiltonian Monte Carlo algorithms introduced in \cite{DUANE1987216} aim at generating samples from a given probability distribution by evolving an associated Hamiltonian system in the phase space. We refer readers to \cite{betancourt2017conceptual, girolami2011riemann} and the references therein for more details. Instead of sampling from a fixed terminal distribution, our research in the paper computes the entire density evolution of Hamiltonian flow.

There are also numerous pieces of research \cite{greydanus2019hamiltonian, toth2019hamiltonian, chen2021data, so2022data, so2022data} focusing on recovering the Hamiltonian, and predicting the dynamics of certain physical systems based on observed trajectories. This is called inverse problem in computing Hamiltonian system in some literature.  Neural networks are widely utilized in those studies to make the computation scalable for high-dimensional settings. Nevertheless, there are significant differences between our problem and theirs, with the most prominent one being that we aim at solving for the entire probability flow while the aforementioned researches always focus on particle-wised computation.

In a broader sense, PWHF and the proposed numerical method provide an alternative approach that can potentially be applied to solve some PDEs in higher dimension by using neural networks. In the past few years, various machine learning methods have been developed for solving PDEs. For example, a deep learning method based on backward stochastic differential equations (SDEs) has been designed to solve high dimensional parabolic PDEs in \cite{han2017deep}. Deep Ritz method (DRM) is studied to solve PDEs whose solutions can be reformulated as the minimizers of variational forms \cite{11a35c8971ea4ca6af3db3c5545af9ed}. Physics-informed neural network (PINN) is proposed as a general framework to solve PDEs by minimizing the residual in least squares sense \cite{raissi2019physics, haghighat2021physics}. Weak adversarial network (WAN) solves PDEs in weak forms through a minimax approach \cite{zang2020weak, bao2020numerical}. More recently, Fourier neural operator \cite{li2020fourier}, DeepONet \cite{wang2021learning}, and Neural control \cite{gaby2023neural} are constructed to approximate the solution operators by neural networks so that the computation can be carried out more efficiently when the same PDEs must be solved repetitively with different initial or boundary conditions. Those and many more studies have shown that deep neural networks (DNN) possess great potentials in handling high-dimensional PDEs with various non-linearities.  

\section{Parameterization of Wasserstein Hamiltonian flow}
In this section, we first briefly review the Wasserstein metric and WHF, then we derive the parameterization of WHF and suggest a strategy to speed up its computation by using an approximate Wasserstein metric. We provide a comprehensive error analysis of PWHF in the end.

\subsection{Formulation of Wasserstein Hamiltonian flow}
\label{Background WHF}
The review here follows the formulation detailed in \cite{chow2020wasserstein}. 
For simplicity, let $M$ be a smooth manifold without boundary. Let us consider the space of smooth density functions supported on $M$ with finite second moment:
\begin{align}
    \mathcal{P}(M)=\cbr[2]{\rho\in C^{\infty}(M)\, : \, \rho\geq 0, \int_M\rho  \,dx=1,\ \int_M |x|^2\rho \,dx < \infty },
\end{align} 
and its tangent space at $\rho\in \mathcal{P}(M)$:
\begin{align}
    T_{\rho}\mathcal{P}(M)=\cbr[2]{\sigma\in C^{\infty}(M):\int_M\sigma \,dx=0}.
\end{align}
We also denote the interior of $\mathcal{P}(M)$ as $\mathcal{P}_{+}(M) := \mathcal{P}(M) \cap \{\rho > 0\}$.

We introduce the tangent bundle and the cotangent bundle of $\mathcal{P}$ by denoting 
\begin{equation}
    \mathcal{T}\mathcal{P} = \bigcup_{\rho\in\mathcal{P}} \{\rho\}\times T_\rho \mathcal{P} , \label{def: tangent bundle}
\end{equation}
as the tangent bundle of $\mathcal{P}$ and 
\begin{equation}
    \mathcal{T}^*\mathcal{P} = \bigcup_{\rho\in\mathcal{P}} \{\rho\}\times T_\rho^* \mathcal{P}  ,\label{def: cotangentbundle}
\end{equation}
as the cotangent bundle of $\mathcal{P}$. 
Here for each $\rho$ the cotangent spaces $T^*_\rho\mathcal{P}$ is taken as $$C^\infty(\mathbb{R}^d)/\mathbb{R} = \{[\Phi]:\Phi\in C^\infty(\mathbb{R}^d)\},$$ where $[\Phi]$ is the equivalent class of functions that are identical to $\Phi$ up to a constant, i.e., $[\Phi] = \{\Phi+c: c\in\mathbb{R}\}$. In the following discussion, we always write the equivalent class $[\Phi]$ as $\Phi$ for simplicity. It is clear that $\nabla \phi=\nabla\Phi$ for any $\phi\in[\Phi]$. Thus 

we also denote $\nabla [\Phi]$ as $\nabla\Phi$ for convenience. 

The space $\mathcal{P}(M)$ becomes a metric space when equipped with the Wasserstein distance. For any $\rho_1, \rho_2\in \mathcal{P}(M)$, the 2-Wasserstein distance (we call it Wasserstein distance for short hereafter) between $\rho_1$ and $\rho_2$ is given by \cite{villani2009optimal}
\begin{equation*}
\label{W2 distance}
W_2(\rho_1, \rho_2) = \Big(\inf_{\pi\in \Pi (\rho_1, \rho_2)}\iint |x-y|^2d\pi(x, y)
\Big)^{1/2},
\end{equation*}
where $\Pi (\rho_1, \rho_2)$ is the set of joint distributions on $\mathbb{R}^d\times \mathbb{R}^d$ with $\rho_1$ and $\rho_2$ as the marginals. This distance naturally induces a metric on $\mathcal{P}(M)$. In fact,  for any $\rho \in \mathcal{P}(M)$ and $\sigma \in T_{\rho}\mathcal{P}(M)$, let us denote $\Delta_{\rho}:=\nabla\cdot(\rho\nabla)$ and $\Delta_{\rho}^{\dagger}$ be its pseudo inverse operator, i.e., $\Phi =(-\Delta_{\rho})^{\dagger}\sigma$ implies $\sigma = -\Delta_{\rho} \Phi$. 

It is shown that $\Phi$ is unique up to a constant for any given $\sigma$ \cite{chow2020wasserstein}. Then the Wasserstein metric is defined by $g^W({\rho})(\cdot, \cdot):T_{\rho}\mathcal{P}(M)\times T_{\rho}\mathcal{P}(M)\rightarrow \mathbb{R}$ as follows,
\begin{align}
\label{Wasserstein metric}
    g^W({\rho})(\sigma_1, \sigma_2)&=\int_M\sigma_1(x) (-\Delta_{\rho})^{\dagger}\sigma_2(x)\,dx
    =\int_M \nabla \Phi_{1}(x) \cdot \nabla \Phi_{2}(x) \rho(x) \,dx, 
\end{align}
where $-\Delta_{\rho} \Phi_{i}(x) = - \nabla \cdot (\rho(x) \nabla \Phi_{i}(x)) = \sigma_i(x)$ for any $x \in M$ and $i=1,2$.

It is known that, equipped with the Wasserstein distance, the density manifold $\mathcal{P}(M)$ becomes a Riemannian manifold on which various differential operators and geometric flows can be established. In particular, WHF is derived by considering the following variational problem, 

\begin{align}
    \label{vari WHF}
    \mathcal{I}(\rho)= \inf_{\rho}\cbr[2]{\int_0^T \mathcal{L}(\rho, \partial_t\rho)\,dt: \rho|_{t=0}=\rho_0, \rho|_{t=T}=\rho_T },
\end{align}
where $\mathcal{L}(\rho, \partial_t\rho) = \frac{1}{2}g^W({\rho})(\partial _t\rho, \partial _t\rho) -\mathcal{F}(\rho)$ is a functional defined on $\mathcal{T}\mathcal{P}$ known as the Lagrangian, and $\rho_0$ and $\rho_T$ are some given initial and terminal densities respectively. The solution of \eqref{vari WHF} satisfies the  Euler-Lagrange equation which can be written as a second-order PDE
\begin{align}
\label{2nd order WHF}
    \partial _{tt}\rho + \Gamma_W(\partial_t\rho, \partial_t\rho)=-\textrm{grad}_W\mathcal{F}(\rho),
\end{align}
where $\Gamma_W$ is a quadratic function of $\partial_t \rho$ called the Christopher symbol given by
\begin{align}
    \Gamma_W(\partial_t\rho, \partial_t\rho)=-\cbr[2]{\Delta_{\partial_t\rho}\Delta_{\rho}^{\dagger}\partial_t\rho+\frac{1}{2}\Delta_{\rho}(\nabla\Delta _{\rho}^{\dagger}\partial_t\rho)^2}, 
\end{align}
and $\textrm{grad}_W$ is the gradient operator on Wasserstein manifold, which is defined by following the standard Riemannian geometry: for any curve $\{\rho(t,\cdot)\}_{t\in (-\delta, \delta)}$ with $\rho|_{t=0} =\rho_0, \ \frac{d}{dt}\rho|_{t=0} = \dot{\rho}|_{t=0} = \dot{\rho}_{0}$, and $\delta>0$ on $\mathcal{P}_+(M)$, the gradient of $\mathcal{F}$ at $\rho_0$ in the sense of Wasserstein metric is defined by the unique tangent vector $\textrm{grad}_W\mathcal{F}(\rho_0)$ such that the following identity holds:

\begin{align*}
\label{Wass gradient}
    \frac{d}{dt}\mathcal{F}(\rho(t,\cdot))\bigg|_{t=0}=g^W(\rho_0)(\textrm{grad}_W\mathcal{F}(\rho_0), \dot{\rho}_0).
\end{align*}

By direct calculation, it can be shown that at any specific $\rho \in \mathcal{P}_{+}(M)$ there is
\begin{align*}
    \textrm{grad}_W\mathcal{F}(\rho)=g^W(\rho)^{-1}\del[2]{\frac{\delta \mathcal{F}}{\delta \rho}}=-\nabla \cdot \del[2]{\rho(x)\nabla\frac{\delta \mathcal{F}}{\delta \rho}(x)}.
\end{align*}
Furthermore, the second-order PDE (\ref{2nd order WHF}) can be reformulated as a system of first-order PDEs given in the following theorem. 
\begin{theorem}[\cite{chow2020wasserstein} Hamiltonian flow in dual coordinates]
Consider $\Phi=(-\Delta_{\rho})^{\dagger}\partial_t\rho\in\mathcal{T}^*_\rho\mathcal{P}$, then equation \eqref{2nd order WHF} is equivalent to \eqref{WHF in dual coordinates} which can be treated as a Hamiltonian system on $\mathcal{T}^* \mathcal{P}$.
\end{theorem}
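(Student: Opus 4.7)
The plan is to verify the equivalence of the two formulations by computing the variational derivatives of $\mathcal{H}$ explicitly and matching them against the terms of the second-order equation \eqref{2nd order WHF}. First I would compute $\delta\mathcal{H}/\delta\Phi$ holding $\rho$ fixed: since $\Phi$ enters $\mathcal{H}$ only through the kinetic term $\int \tfrac{1}{2}|\nabla\Phi|^2\rho\,dx$, integration by parts yields $\delta\mathcal{H}/\delta\Phi = -\nabla\cdot(\rho\nabla\Phi) = -\Delta_\rho\Phi$. The first Hamilton equation $\partial_t\rho = -\Delta_\rho\Phi$ is then, by the very definition of the pseudo-inverse recalled just before the theorem, exactly $\Phi = (-\Delta_\rho)^\dagger\partial_t\rho$, which identifies $\Phi$ as the momentum conjugate to $\rho$ under the Wasserstein metric. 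Holding $\Phi$ fixed in the other slot gives $\delta\mathcal{H}/\delta\rho = \tfrac{1}{2}|\nabla\Phi|^2 + \delta\mathcal{F}/\delta\rho$, so that the second Hamilton equation reads $\partial_t\Phi = -\tfrac{1}{2}|\nabla\Phi|^2 - \delta\mathcal{F}/\delta\rho$.

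For the direction $\eqref{WHF in dual coordinates}\Rightarrow\eqref{2nd order WHF}$, I would differentiate the first Hamilton equation once more in time to obtain $\partial_{tt}\rho = -\Delta_{\partial_t\rho}\Phi - \Delta_\rho(\partial_t\Phi)$, then substitute $\partial_t\Phi$ from the second Hamilton equation. Rewriting $\Phi = -\Delta_\rho^\dagger\partial_t\rho$ so that $\nabla\Phi = -\nabla\Delta_\rho^\dagger\partial_t\rho$ and hence $|\nabla\Phi|^2 = (\nabla\Delta_\rho^\dagger\partial_t\rho)^2$, the right-hand side reorganizes precisely as $-\Gamma_W(\partial_t\rho,\partial_t\rho) - \textrm{grad}_W\mathcal{F}(\rho)$, upon invoking the identity $\textrm{grad}_W\mathcal{F}(\rho) = -\Delta_\rho(\delta\mathcal{F}/\delta\rho)$ recorded earlier in the excerpt. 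For the converse $\eqref{2nd order WHF}\Rightarrow\eqref{WHF in dual coordinates}$, given $\rho$ satisfying \eqref{2nd order WHF}, I would simply \emph{define} $\Phi := (-\Delta_\rho)^\dagger\partial_t\rho$, which by construction encodes the first Hamilton equation; the second Hamilton equation then follows by reversing the algebraic manipulations above, with the expression for $\partial_t\Phi$ determined modulo an additive constant that is harmless in the cotangent space $C^\infty(M)/\mathbb{R}$.

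The main obstacle is essentially bookkeeping rather than conceptual: one must carefully track the sign convention $\Delta_\rho^\dagger = -(-\Delta_\rho)^\dagger$ throughout, and must verify that the constant-mod-$\mathbb{R}$ ambiguity in $\Phi$ is consistent with both sides of the Hamiltonian system. The latter is resolved by noting that $\mathcal{H}$ depends on $\Phi$ only through $\nabla\Phi$, so it descends to a well-defined functional on $C^\infty(M)/\mathbb{R}$, and that $\partial_t\Phi$ is therefore defined up to a time-dependent constant that may be absorbed into the chosen representative. A secondary, genuinely analytic point is the existence and regularity of $\Delta_\rho^\dagger$ acting on $T_\rho\mathcal{P}$; this is legitimate on $\mathcal{P}_+(M)$, where $\Delta_\rho$ is uniformly elliptic on the zero-mean subspace and admits a Green's operator, so that the formal manipulations above can be rigorously justified.
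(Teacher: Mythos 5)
The paper does not supply a proof of this statement; it is stated as a cited result from \cite{chow2020wasserstein} and no argument is reproduced, so there is nothing in the present text to compare against. Your direct-verification approach is nonetheless sound: computing $\delta\mathcal{H}/\delta\Phi=-\nabla\cdot(\rho\nabla\Phi)=-\Delta_\rho\Phi$ and $\delta\mathcal{H}/\delta\rho=\tfrac{1}{2}|\nabla\Phi|^2+\delta\mathcal{F}/\delta\rho$, differentiating $\partial_t\rho=-\Delta_\rho\Phi$ once more in time, and matching the resulting $-\Delta_{\partial_t\rho}\Phi+\tfrac{1}{2}\Delta_\rho|\nabla\Phi|^2+\Delta_\rho(\delta\mathcal{F}/\delta\rho)$ against $-\Gamma_W-\mathrm{grad}_W\mathcal{F}$ does reproduce \eqref{2nd order WHF} once one uses $\Delta_\rho^\dagger\partial_t\rho=-\Phi$ and the given formula $\mathrm{grad}_W\mathcal{F}=-\Delta_\rho(\delta\mathcal{F}/\delta\rho)$. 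Your handling of the converse is also correct in outline, though it is worth stating the crux explicitly: after defining $\Phi:=(-\Delta_\rho)^\dagger\partial_t\rho$ and substituting, one obtains $\Delta_\rho\bigl(\partial_t\Phi+\tfrac{1}{2}|\nabla\Phi|^2+\tfrac{\delta\mathcal{F}}{\delta\rho}\bigr)=0$, and it is precisely the fact that $\ker\Delta_\rho$ consists of constants on a connected manifold with $\rho>0$ that reduces this to the second Hamilton equation in $C^\infty(M)/\mathbb{R}$; your closing remarks gesture at this but do not quite isolate it as the decisive step.
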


WHF \eqref{WHF in dual coordinates} describes the evolution of density  $\rho$ as a function of space and time. This can be viewed as the Eulerian formulation if using the language of classical fluid mechanics. Likewise, the dynamics can be written in the Lagrangian formulation, which describes the particle motion, i.e., the evolution of particle position, $\boldsymbol{X}$ as a function of time $t$. $\boldsymbol{X}$ is a random variable whose distribution follows the density $\rho$ governed by \eqref{WHF in dual coordinates}. The connections between two formulations are summarized in the following theorem.

\begin{theorem}[\cite{chow2020wasserstein}]
\label{theorem: particle whf}
    Let $(\boldsymbol{X}(t))_{0\leq t<t_0}$ be a random process in $\mathbb{T}^d$ with density $\rho$. Suppose $\boldsymbol{X}(t)$ satisfies
\begin{equation} \label{pWHF-1}
\begin{split}
        &\frac{d^2}{dt^2}\boldsymbol{X}(t)=-\nabla\frac{\delta}{\delta \rho(t, \boldsymbol{X})}\mathcal{F}(\rho(t, \boldsymbol{X}), \boldsymbol{X}), \quad \textrm{for any }\boldsymbol{X}_0\in \mathbb{T}^d,\\
        &\dot{\boldsymbol{X}}(0)=\nabla \Phi_0(\boldsymbol{X}_0).
\end{split}
\end{equation}
Then the density $\rho(t, \cdot)$ of ${\boldsymbol{X}}$ is a solution of the WHF (\ref{WHF in dual coordinates}).
\end{theorem}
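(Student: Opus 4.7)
The plan is to derive the WHF system \eqref{WHF in dual coordinates} from the particle ODEs \eqref{pWHF-1} by combining a method-of-characteristics argument with a Hamilton-Jacobi reformulation. First I would introduce the Eulerian velocity field $v(t,x)$ defined along each trajectory by $v(t,\boldsymbol{X}(t)) = \dot{\boldsymbol{X}}(t)$; assuming $\boldsymbol{X}_0$ has density $\rho_0$ and that characteristics do not cross, this field is single-valued on the support of $\rho(t,\cdot)$. Then the standard push-forward argument for random processes yields the continuity equation $\partial_t \rho + \nabla\cdot(\rho v)=0$. Once $v$ is shown to be a gradient $\nabla \Phi$, this becomes $\partial_t \rho = -\nabla\cdot(\rho\nabla\Phi)$, which, in view of the expression for $\mathcal{H}$ in \eqref{eq:WHF-H}, is exactly $\partial_t\rho = \tfrac{\delta \mathcal{H}}{\delta \Phi}(\rho,\Phi)$, i.e., the first equation of \eqref{WHF in dual coordinates}.

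The key structural step is then to show $v(t,\cdot) = \nabla \Phi(t,\cdot)$ for all $t$. At $t=0$ this holds by the initial condition $\dot{\boldsymbol{X}}(0)=\nabla \Phi_0(\boldsymbol{X}_0)$. For $t>0$ I would use the material derivative identity
\[
\frac{d}{dt}v(t,\boldsymbol{X}(t)) = \partial_t v + (v\cdot\nabla) v = \ddot{\boldsymbol{X}}(t) = -\nabla \frac{\delta \mathcal{F}}{\delta \rho}\bigl(\rho(t,\cdot)\bigr)(\boldsymbol{X}(t)),
\]
and take the curl of both sides. The right-hand side is an exact differential, so it vanishes under curl, producing a transport-type equation for the vorticity $\nabla\times v$ with zero initial data; hence $\nabla\times v \equiv 0$, and a potential $\Phi(t,\cdot)$ can be extracted.

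With $v = \nabla\Phi$ in hand, substituting back into the material derivative identity and using $(\nabla\Phi\cdot\nabla)\nabla\Phi = \nabla\bigl(\tfrac{1}{2}|\nabla\Phi|^2\bigr)$ yields
\[
\nabla\Bigl(\partial_t \Phi + \tfrac{1}{2}|\nabla\Phi|^2 + \tfrac{\delta \mathcal{F}}{\delta \rho}\Bigr) = 0.
\]
Because elements of $T_\rho^* \mathcal{P}$ are taken modulo constants, any purely $t$-dependent term can be absorbed into $\Phi$, giving the Hamilton-Jacobi equation $\partial_t \Phi = -\tfrac{1}{2}|\nabla\Phi|^2 - \tfrac{\delta \mathcal{F}}{\delta \rho}$. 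A direct computation from \eqref{eq:WHF-H} shows $\tfrac{1}{2}|\nabla\Phi|^2 + \tfrac{\delta \mathcal{F}}{\delta \rho} = \tfrac{\delta \mathcal{H}}{\delta \rho}(\rho,\Phi)$, which is precisely the second equation of \eqref{WHF in dual coordinates}.

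The hard part will be rigorously justifying the curl-free step and the existence of a single-valued potential on $\mathbb{T}^d$: the torus is not simply connected, so in principle the harmonic component of $v$ in the Hodge decomposition could survive. This is ruled out by noting that both the initial velocity $\nabla\Phi_0$ and the driving force $-\nabla\tfrac{\delta \mathcal{F}}{\delta \rho}$ are exact, so the harmonic part of $v$ is zero at $t=0$ and the Newton-type equation transports it trivially. One additionally needs enough regularity on $\rho$ and the flow map to interchange Eulerian and Lagrangian derivatives; these conditions are inherited from the standing smoothness assumptions set up in Section \ref{Background WHF}.
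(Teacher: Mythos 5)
The paper itself does not prove this theorem --- it is quoted directly from \cite{chow2020wasserstein} --- so there is no in-text argument to compare against. Your reconstruction is the standard Lagrangian-to-Eulerian passage and is essentially correct: the continuity equation follows by push-forward, the Hamilton--Jacobi equation follows by the material-derivative identity once $v=\nabla\Phi$, and the two equations are exactly $\partial_t\rho=\delta\mathcal{H}/\delta\Phi$ and $\partial_t\Phi=-\delta\mathcal{H}/\delta\rho$ for the Hamiltonian \eqref{eq:WHF-H}.

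Two spots deserve more care than your sketch gives them. First, in dimension $d$ the ``curl'' should be understood as the antisymmetric $2$-tensor $\omega_{ij}=\partial_i v_j-\partial_j v_i$ (equivalently the exterior derivative $dv^\flat$), not a cross product; antisymmetrizing $\partial_t v + (v\cdot\nabla)v = -\nabla\,\tfrac{\delta\mathcal{F}}{\delta\rho}$ gives a \emph{linear homogeneous} evolution for $\omega$, so $\omega(0)=0$ forces $\omega\equiv 0$ --- that linearity is what makes the argument work, and it is worth saying so. Second, ``the Newton-type equation transports the harmonic part trivially'' is not immediate and is the genuine content of the $\mathbb{T}^d$ case. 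Make it explicit: once $\omega\equiv 0$ one has $(v\cdot\nabla)v=\tfrac{1}{2}\nabla|v|^2$, so spatially averaging the Newton equation over $\mathbb{T}^d$ kills both the nonlinear term and $-\nabla\tfrac{\delta\mathcal{F}}{\delta\rho}$ by periodicity, hence the constant (harmonic) component $\tfrac{1}{|\mathbb{T}^d|}\int_{\mathbb{T}^d} v\,dx$ is conserved in time; it vanishes at $t=0$ because $v(0,\cdot)=\nabla\Phi_0$, so it vanishes for all $t$ and a single-valued potential $\Phi(t,\cdot)$ exists. With these two clarifications, and noting (as you do) that the $t$-dependent additive constant is absorbed because $T_\rho^*\mathcal{P}$ is taken modulo constants, the argument is complete under the standing smoothness and non-crossing assumptions.
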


For simplicity, this theorem was presented with periodic boundary condition or the underlying manifold being $\mathbb{T}^d$. 
By introducing a new momentum variable $v(t, \boldsymbol{X})$,  \eqref{pWHF-1} 
is converted into a system of first-order equations: 
\begin{subequations}
  \label{particle dynamics}
  \begin{align}
      & \frac{d}{dt}{\boldsymbol{X}} = v(t, \boldsymbol{X}), \quad \boldsymbol{X}(0) = \boldsymbol{X}_0,\\
      & \frac{d}{dt}v(t, \boldsymbol{X}) = -\nabla\frac{\delta}{\delta \rho}\mathcal{F}(\rho(t, \boldsymbol{X}), \boldsymbol{X}),  \quad v(0, \boldsymbol{X}_0) = \nabla\Phi(0, \boldsymbol{X}_0).
  \end{align}
\end{subequations}
For convenience, we call the system \eqref{particle dynamics} the \emph{particle} WHF.

\subsection{Parameterized WHF}
\label{sec: derive theta dynamics}
As one of the main goals of this paper, we introduce the PWHF in this subsection. The adopted strategy is to project the Lagrangian $\mathcal{L}$ in \eqref{vari WHF} onto a parameter space defined by the push-forward maps, and then derive the corresponding Euler-Lagrange equation in the parameter space.

\subsubsection{Parameter space defined by push-forward maps}\label{section : param space} Let $T:\mathbb{R}^d\rightarrow \mathbb{R}^d$ be a measurable map, also called push-forward map in $\mathbb{R}^d$. Given a reference distribution $\lambda$, the induced push-forward distribution, denoted by $T_{\sharp}\lambda$, is defined as, 
\begin{align*}
    T_{\sharp}\lambda (E)=\lambda(T^{-1}(E))\textrm{ for all measurable } E\subset \mathbb{R}^d,
\end{align*}
where $T^{-1}(E)$ is the pre-image of $E$. 

Let us take $T$ as parameterized map, namely for any $\theta\in \Theta$, $T_{\theta}: \mathbb{R}^d\rightarrow\mathbb{R}^d$ is a parametric function with parameter $\theta$. 
Here $\Theta$, as a subset of $\mathbb{R}^m$, is called the \emph{parameter space}, where $m$ is the number of parameters of $T_{\theta}$ (i.e., the dimension of $\theta$). Typical examples of $T_{\theta}$ include Fourier expansion, finite element approximation, and neural networks. 

The map $T_{(\cdot)\#}:\Theta\rightarrow\mathcal{P}$ given by $\theta\mapsto T_{\theta\#}\lambda$ naturally defines an immersion map from $\Theta$ to the probability manifold $\mathcal{P}$. Collecting all parameterized distributions together, i.e.,
\begin{equation*}
  \mathcal{P}_\Theta = \left\{\rho_\theta = T_{\theta\sharp}\lambda~:~\theta\in\Theta\right\}, 
\end{equation*}
we obtain a finite-dimensional submanifold $\mathcal{P}_\Theta$ of $\mathcal{P}$. We can define the tangent space of $\mathcal{P}_\Theta$ at each $\theta$ as
$T_{\rho_\theta}\mathcal{P}_\Theta = \mathrm{span}\{\frac{\partial\rho_\theta}{\partial\theta_1},\cdots, \frac{\partial\rho_\theta}{\partial\theta_m}\}.$ The tangent bundle is then
$\mathcal{T}\mathcal{P}_\Theta = \cup_{\theta\in\Theta}\{\rho_\theta\}\times T_{\rho_\theta}\mathcal{P}_\Theta.$ On the other hand, the cotangent space $T_{\rho_\theta}^*\mathcal{P}_\Theta$ is the dual space of $T_{\rho_\theta}\mathcal{P}_\Theta$, and the cotangent bundle is $\mathcal{T}^*\mathcal{P}_\Theta = \cup_{\theta\in\Theta}\{\rho_\theta\}\times T_{\rho_\theta}^*\mathcal{P}_\Theta.$

A counterpart to the Wasserstein metric defined on $\mathcal{P}$ can be introduced on the parameter space $\Theta\in \mathbb{R}^m$ by using the pullback operator through $T_{\theta}$, i.e., $G(\theta) = {T_{\theta \sharp}}^* g^W$, where $g^W$ is Wasserstein metric tensor given in (\ref{Wasserstein metric}). This is the pullback Wassertein metric on the parameter space. It turns out that $G(\theta)$ is an $m \times m$ positive semi-definite matrix which defines a bilinear form on the tangent space of $\Theta$ at $\theta$, $\mathcal{T}_\theta\Theta\simeq\mathbb{R}^m$ (rigorously speaking $\mathcal{T}_\theta\Theta$ may be a subspace of $\mathbb{R}^{m}$ depending on the choice of $T_{\theta}$ as addressed in Remark \ref{rmk:singular-G} below). For any $\theta\in\Theta$ and $\xi_1,\xi_2 \in \mathcal{T}_\theta \Theta$, we have
\begin{equation}
  G(\theta)(\xi_1, \xi_2) = g^W(\rho_\theta)((T_{\theta \sharp})_*\xi_1, (T_{\theta \sharp})_*\xi_2), \label{metric tensor}
\end{equation}
where $(T_{\theta \sharp})_*\xi_i$ is the tangent vector at $T_{\theta\sharp}\lambda$ on the Wasserstein manifold due to the push-forward of $\xi_i$ by the map $T_{\theta\sharp}$ for $i=1,2$.

Following the study detailed in \cite{liu2022neural}, the metric tensor $G(\theta)$ takes the following form 

\begin{equation}
\label{eq:G}
  G(\theta) = \int  \nabla\Psi_\theta(T_\theta(z))\nabla \Psi_\theta(T_\theta(z))^{\top}~d \lambda(z),
\end{equation}
where $\Psi_\theta=(\psi_{\theta, 1},\cdots,\psi_{\theta, m})^\top: \mathbb{R}^{d} \to \mathbb{R}^{m}$ and $\nabla\Psi_\theta$ is the $m\times d$ Jacobian of $\Psi_\theta$. For each $j=1,2,\cdots,m$, $\psi_{\theta, j}$ solves the following equation:
\begin{equation}
  \nabla\cdot(\rho_\theta\nabla\psi_{\theta, j}(x)) = \nabla\cdot(\rho_\theta~\partial_{\theta_j} T_\theta(T^{-1}_\theta(x))), \label{Hodge Dcom}
\end{equation}
with condition $\lim_{x\rightarrow\infty}\rho_\theta(x)\nabla\psi_{\theta, j}(x)=0.$
We omit the derivation and the properties of $G(\theta)$. Interested readers are referred to Section 3.1 of \cite{liu2022neural} for further details.

\subsubsection{Parameterization of WHF}
\label{sec: PWHF}
We introduce the parameterization of WHF in this section, which is the first contribution of this work. Our treatment is outlined in the following flowchart:
\begin{tcolorbox}
\begin{gather}
 \textrm{Starting with a given Hamiltonian } ~\mathcal{H}(\rho,\Phi)  \nonumber \\
 \Downarrow  \nonumber \\
 \textrm{By taking Legendre Transform of $\mathcal{H}$, we obtain Lagrangian }~\mathcal{L}(\rho, \dot{\rho}) \nonumber \\
 \Downarrow \nonumber \\
 \textrm{Using $\mathcal{L}$, we can define $L$ on $\mathcal{T}\Theta$ as }~ L(\theta,\dot\theta) = \mathcal{L}((T_{\cdot \sharp}\lambda)(\theta), (T_{\theta \sharp}\lambda)_*\dot\theta) \nonumber \\
 \Downarrow \nonumber \\
 \textrm{By applying Legendre transform to $L$, we obtain the Hamiltonian in parameter space } ~ H(\theta, p) \nonumber \\
 \Downarrow \nonumber \\
 \textrm{We formulate the PWHF as }\nonumber\\
 \dot\theta(t) = \partial_p H (\theta(t), p(t)), \nonumber \\
 \dot p(t) = -\partial_\theta H(\theta(t), p(t)).\nonumber \\
 \nonumber
\end{gather}
\end{tcolorbox}

 Following this procedure, we derive the PWHF by leveraging the perspective of the Lagrangian mechanics. 
 To be more specific, as introduced in Section \ref{Background WHF}, we consider the Lagrangian 
\begin{equation*}
  \mathcal{L}(\rho,\dot\rho) = \frac{1}{2}g^W(\dot \rho, \dot \rho) - \mathcal{F}(\rho),
\end{equation*}
where $g^W$ is defined in (\ref{Wasserstein metric}) and $\mathcal{F}(\rho)$ takes the general form in \eqref{general potential energy}.
We define the counterpart Lagrangian $L$ of $\mathcal{L}$ on $\mathcal{T}\Theta$ as 
\begin{equation*}
  L(\theta, \dot\theta)=\mathcal{L}(T_{\theta \sharp}\lambda, (T_{\theta \sharp})_*\dot\theta).
\end{equation*}
More precisely, denote $\rho_\theta=T_{\theta\sharp}\lambda$, then $(T_{\theta \sharp})_*\dot \theta = \frac{\partial\rho_\theta}{\partial \theta} \dot{\theta}$, the Lagrangian $L$ takes the following form
\begin{equation}
\label{para Lagrangian}
    L(\theta, \dot\theta) = \mathcal{L} \del[2]{\rho_\theta, \frac{\partial \rho_\theta}{\partial \theta}\dot\theta}=\frac{1}{2}\dot\theta^\top G(\theta)   \dot\theta-F(\theta),
\end{equation}
where $G(\theta)$ 
is defined in \eqref{eq:G}, and $F(\theta):=\mathcal{F}(\rho_\theta)$. The detailed calculation of \eqref{para Lagrangian} is given in Appendix \ref{calculate L}.

\begin{theorem}[Euler-Lagrange equation in parameter space]
\label{Theorem exact PWHF}
Consider the Lagrangian $L$ defined in (\ref{para Lagrangian}), as well as the variational problem 
\begin{align}
    \label{vari exact PWHF}
    \mathcal{I}^\Theta (\theta)= \inf_{\theta}\cbr[2]{\int_0^T L(\theta, \dot{\theta})dt: \rho_{\theta}|_{t=0}=\rho_0, \rho_{\theta}|_{t=T}=\rho_T }.
\end{align}
The Euler-Lagrange equation of the above variational problem is the following second-order ODE,
\begin{equation}
\label{theta 2nd order PWGF}
G(\theta)\ddot{\theta} + \sum_{k=1}^m \dot{\theta}_{k}\partial_{\theta_k}G(\theta) \dot{\theta} - \frac{1}{2} [\dot{\theta}^{\top} \partial_{\theta_k} G(\theta) \dot{\theta}]_{k=1}^{m} = -\nabla_{\theta}F(\theta)
\end{equation}
with $\dot{\theta} = [\dot{\theta}_k]_{k=1}^{m}$. Here $[a_k]_{k=1}^{m}$ represents an $m$-dimensional vector with $a_k$ as its $k$th component.

\end{theorem}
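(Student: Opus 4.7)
The plan is to apply the standard Euler--Lagrange machinery directly to the finite-dimensional Lagrangian $L(\theta,\dot\theta) = \tfrac{1}{2}\dot\theta^\top G(\theta)\dot\theta - F(\theta)$. Once the parameterization pulls the variational problem back to $\Theta\subset\mathbb{R}^m$, \eqref{vari exact PWHF} becomes a classical variational problem on a finite-dimensional smooth manifold, so the critical-point condition is simply $\tfrac{d}{dt}\partial_{\dot\theta} L - \partial_\theta L = 0$. I would obtain this in the usual way: take an admissible variation $\theta(t) + \epsilon \eta(t)$ with $\eta(0) = \eta(T) = 0$ (preserving the endpoint conditions $\rho_\theta|_{t=0} = \rho_0$ and $\rho_\theta|_{t=T} = \rho_T$ to first order), differentiate the action at $\epsilon=0$, integrate by parts in $t$ so that boundary terms drop out, and invoke the fundamental lemma of the calculus of variations.

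The remaining work is pure computation of the two partial derivatives of $L$. Because $G(\theta)$ is independent of $\dot\theta$, the conjugate momentum is $p := \partial_{\dot\theta} L = G(\theta)\dot\theta$, and the chain rule in $t$ yields
\begin{equation*}
  \frac{d}{dt}\bigl(G(\theta)\dot\theta\bigr) = G(\theta)\ddot\theta + \sum_{k=1}^{m} \dot\theta_{k}\,\partial_{\theta_{k}} G(\theta)\,\dot\theta .
\end{equation*}
For $\partial_\theta L$, only the kinetic quadratic form contributes to each component $\theta_k$, giving the $m$-vector whose $k$-th entry is $\tfrac{1}{2}\dot\theta^\top \partial_{\theta_{k}} G(\theta)\dot\theta$, while the potential contributes $-\nabla_\theta F(\theta)$. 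Substituting the two sides into $\tfrac{d}{dt}\partial_{\dot\theta}L = \partial_\theta L$ and rearranging reproduces \eqref{theta 2nd order PWGF}.

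The only subtlety --- hardly an obstacle in the derivation itself --- is bookkeeping: in the quadratic form $\dot\theta^\top G(\theta)\dot\theta$ the variable $\theta$ appears only through $G$, since $\theta$ and $\dot\theta$ are treated as independent coordinates on $\mathcal{T}\Theta$, so one must avoid double-counting the $\theta$-dependence when computing $\partial_\theta L$. A separate point worth flagging is that the possible singularity of $G(\theta)$ noted earlier means \eqref{theta 2nd order PWGF} cannot always be solved explicitly for $\ddot\theta$, but this affects only interpretation (and motivates the subsequent Hamiltonian reformulation $(\theta,p)$) rather than the derivation. Finally, the statement is properly a characterization of critical points of the action rather than true minimizers, so invoking the Euler--Lagrange equation on \eqref{vari exact PWHF} is justified in the usual Hamiltonian sense even if the infimum is not attained; the derivation applies verbatim to stationary points.
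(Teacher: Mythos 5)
Your proposal is correct and follows essentially the same route as the paper: write down the finite-dimensional Euler--Lagrange equation $\tfrac{d}{dt}\partial_{\dot\theta}L = \partial_\theta L$, compute both sides directly from $L(\theta,\dot\theta) = \tfrac{1}{2}\dot\theta^\top G(\theta)\dot\theta - F(\theta)$, and substitute. The intermediate expressions you obtain for $\tfrac{d}{dt}\partial_{\dot\theta}L$ and $\partial_\theta L$ are exactly those in the paper's proof, and your remarks on the admissible variations, the $\theta$--$\dot\theta$ independence, and the stationary-point interpretation are consistent with the paper's treatment.
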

\begin{proof}
Recall the Euler-Lagrange equation of the parameterized variational problem \eqref{vari exact PWHF} is
\begin{equation}
\label{eq:EL}
    \frac{d}{dt}\frac{\partial}{\partial\dot\theta}L(\theta, \dot\theta)=\frac{\partial}{\partial\theta}L(\theta, \dot\theta).
\end{equation}
The left-hand side of \eqref{eq:EL} is 
\begin{align*}
    \frac{d}{dt}\frac{\partial}{\partial\dot\theta}L(\theta, \dot\theta)=G(\theta)\ddot{\theta} + \sum_{k=1}^m \dot{\theta}_{k}\partial_{\theta_k}G(\theta) \dot{\theta}, 
\end{align*}
and the right-hand side of \eqref{eq:EL} is
\begin{align*}
\label{parametric WHF}
\frac{\partial}{\partial\theta}L(\theta, \dot{\theta})=\frac{1}{2} [\dot{\theta}^{\top} \partial_{\theta_k} G(\theta) \dot{\theta}]_{k=1}^{m}-\nabla_{\theta}F(\theta)  .
\end{align*}
Plugging them into \eqref{eq:EL} yields \eqref{theta 2nd order PWGF}. 
\end{proof}

Now we temporarily assume that $G(\theta)$ is non-singular for any $\theta\in\Theta$ (the more general case where $G(\theta)$ can be singular will be discussed in Remark \ref{rmk:singular-G}), we can introduce the associated Hamiltonian via Legendre transform. Specifically, we denote $\mathcal{T}^*\Theta$ as the phase space (cotangent bundle) of $\Theta$, then define $H(\cdot,\cdot): \mathcal{T}^*\Theta\rightarrow \mathbb{R}$ as
\begin{equation}
  H(\theta, p) = \sup_{\dot{\theta}}\,\{\dot\theta^\top p - L(\theta, \dot\theta)\} = \frac{1}{2} p^\top G(\theta)^{-1} p + F(\theta).  \label{def: H(theta,p)}
\end{equation}
Following the convention in classical mechanics \cite{goldstein2011classical}, we introduce the momentum 
\begin{equation}
  p = \frac{\partial L(\theta, \dot\theta)}{\partial \dot \theta} =  G(\theta)\dot\theta.   \label{introduce momemtun}
\end{equation}
Then the Hamiltonian system associated with \eqref{theta 2nd order PWGF} can be formulated as
\begin{subequations}
\label{eq:pwhf}
\begin{align}
   & \dot \theta = \frac{\partial H(\theta, p)}{\partial p} = G(\theta)^{-1} p ,  \label{pwhf 1}\\
   & \dot p = -\frac{\partial H(\theta, p)}{\partial \theta} = \frac{1}{2}[p^\top G(\theta)^{-1} \partial_{\theta_k}G(\theta)G(\theta)^{-1}p]_{k=1}^m - \nabla_\theta F(\theta). \label{pwhf 2}
\end{align}
\end{subequations}
We call the ODE system \eqref{eq:pwhf} the \textit{parameterized Wasserstein Hamiltonian flow} (PWHF).

\begin{remark}[Existence and uniqueness of PWHF]
    Under the assumption that $G(\theta)$ is non-singular on $\Theta$, one can verify that both $\frac{\partial H(\theta, p)}{\partial \theta}$  and  $\frac{\partial H(\theta, p)}{\partial p}$ are locally Lipschitz. Thus by the standard ODE theory, the PWHF \eqref{eq:pwhf} must have a unique solution over a finite time interval $[0, t^*)$ for some $t^*>0$ from any given initial value. However, determining $t^*$ is a challenging problem due to the complex structure of $T_{\theta}$ and geometry of $\mathcal{P}_{\Theta}$. We leave this for future investigations.
\end{remark}

\begin{remark}[Singular $G(\theta)$]\label{rmk:singular-G}
    In our derivation of the PWHF \eqref{eq:pwhf}, the metric tensor $G$ is assumed to be non-singular. This assumption can be relaxed. If $G$ is singular, the PWHF can be derived similarly with the following modifications: we restrict $\mathcal{T}_{\theta}\Theta = \mathcal{T}_{\theta}^*\Theta= \mathcal{R}(G(\theta)) \subset \mathbb{R}^m$ at each $\theta$, where $\mathcal{R}(\cdot)$ denotes the range (i.e., column space) of its argument matrix. Then $G(\theta)$ is positive definite, hence a non-degenerate inner product, on the tangent and cotangent spaces with corresponding bundles denoted by $\{(\theta, \mathcal{T}_{\theta}\Theta):\theta \in \Theta\}$ and $\{(\theta, \mathcal{T}_{\theta}^*\Theta):\theta \in \Theta\}$ respectively. We can define $H(\cdot,\cdot): \mathcal{T}^*\Theta \to \mathbb{R}$ by
\begin{equation}
  H(\theta, p) = \sup_{\dot{\theta}\in \mathcal{R}(G)} \{\dot\theta^\top p - L(\theta, \dot\theta)\} = \frac{1}{2} p^\top G(\theta)^{\dagger} p + F(\theta),
\end{equation}
where $G(\theta)^{\dagger}$ is the Penrose-Moore pseudo inverse of $G(\theta)$. In this case, the Legendre transform is well-defined, because the maximizer $\dot{\theta}=G(\theta)^{\dagger} p$ can be attained as long as $p \in \mathcal{T}^*_{\theta}\Theta = \mathcal{R}(G(\theta))$, which is always true given the definition of momentum $p=G(\theta)\dot{\theta} \in \mathcal{R}(G(\theta))$.
The resulting Hamiltonian system on $(\theta,p)$ is the same as \eqref{pwhf 1} and \eqref{pwhf 2} except for two modifications: $G^{-1}$ is replaced by $G^{\dagger}$; and \eqref{pwhf 2} has an additional term due to the derivative of the pseudo inverse $G(\theta)^{\dagger}$. Detailed derivation procedure about this additional term can be found in Section \ref{relaxedhd-pio}.  
\end{remark}

\subsubsection{Transformation between $\mathcal{T}^*\Theta$ and $\mathcal{T}^*\mathcal{P}_\Theta$ induced by the push-forward map}

In the previous section, we derive a Hamiltonian system PWHF \eqref{eq:pwhf} on the phase space $\mathcal{T}^*\Theta$. But how the parameter-momentum pair $(\theta, p)$ in PWHF relates to the probability-potential pair $(\rho, \Phi)$ in WHF is not clearly illustrated. In this section, we connect $(\theta, p)$ to $(\rho, \Phi)\in\mathcal{T}^*\mathcal{P}_\Theta$ by deriving a transformation $\tau$ that maps every $(\theta, p)\in\mathcal{T}^*\Theta$ to $(\rho_\theta, \Phi_{\theta,p})\in\mathcal{T}^*\mathcal{P}_\Theta$. We again assume $G(\theta)$ is non-singular here for simplicity, and the general singular case can be handled similarly by replacing $G(\theta)^{-1}$ with $G(\theta)^{\dagger}$ as described in Remark \ref{rmk:singular-G}.

To determine the map from $(\theta, p)$ to $\Phi_{\theta, p}$, we recall that in the derivation of WHF \eqref{WHF in dual coordinates} detailed in \cite{chow2020wasserstein}, the relation between $\Phi$ and $\dot\rho$ is given by
\begin{equation}
    \Phi = \partial_{\dot\rho}\mathcal{L}(\rho, \dot\rho). \label{rel on P}
\end{equation}
If restricting $(\rho, \dot\rho)$   
on $\mathcal{T}\mathcal{P}_\Theta$, i.e., setting $(\rho, \dot\rho)$ as $(\rho_\theta, (T_{\theta \sharp})_*\dot\theta)=(\rho_\theta, \frac{\partial\rho_\theta}{\partial\theta}\cdot\dot\theta)$, we obtain 
\begin{equation}
\label{rel psitilde}
    \Phi = \partial_{\dot\rho} \mathcal{L} \del[2]{\rho_\theta, \frac{\partial\rho_\theta}{\partial\theta}\cdot\dot\theta} =  -\Delta_{\rho_\theta}^\dagger(-\nabla\cdot(\rho_\theta\nabla\Psi_\theta^\top\dot\theta))=\Psi_\theta^\top\dot\theta, 
\end{equation}
where $\Psi_\theta:\mathbb{R}^d\rightarrow \mathbb{R}^m$ is defined in \eqref{Hodge Dcom}. On the other hand, we define the momentum $p\in \mathcal{T}^*_\theta \Theta$ through $p = G(\theta)\dot\theta$ in \eqref{introduce momemtun}. Thus, $\dot\theta = G(\theta)^{-1} p $, and by plugging this into \eqref{rel psitilde}, we obtain
\begin{equation}
 \Phi_{\theta, p} = \Psi_\theta^\top G(\theta)^{-1}  p. \label{transfm cotspc}
\end{equation}

By combining $\rho_\theta$ and $\Phi_{\theta,p}$ together, we obtain the following transformation $\tau$ from $\mathcal{T}^*\Theta$ to $T^*\mathcal{P}_\Theta$: 
\begin{align}
  \tau:\quad & ~   \mathcal{T}^*\Theta \longrightarrow \mathcal{T}^*\mathcal{P}_\Theta,\nonumber \\
        & (\theta,~p) \longmapsto (T_{\theta\sharp}\lambda,~ \Psi_\theta^\top G(\theta)^{-1}p ) .\label{def tau}
\end{align}
Further discussions on the geometric properties, such as whether or not $\tau$ preserves the symplectic form, are provided in Appendix \ref{append a }.

Once the solution $\{\theta(t), p(t)\}$ to PWHF is computed, the transformation $\tau$
gives a valid approximation to the solution $(\rho(t,\cdot), \Phi(t,\cdot))$ of WHF. However, in order to significantly improve computation efficiency of PWHF, we will introduce a simplified version of the pullback Wasserstein metric $G(\theta)$ in the next section, and establish a bound for the approximation error, measured by the  Wasserstein metric,  between our numerical solution $(\rho_{\theta(t)}, \partial_\theta T_{\theta(t)}\circ T_{\theta(t)}^{-1}(\cdot)\dot\theta(t))$ and the solution $(\rho(t,\cdot), \nabla \Phi(t,\cdot))$ of the original WHF. 

\subsection{PWHF with a simplified metric}
\label{sec: derive relaxed PWHF}
Theorem \ref{Theorem exact PWHF} reduces the PDE in density space to a parameterized system in finite-dimensional space, hence potentially providing a way to compute the WHF by numerical algorithms. However, the computational cost to solve (\ref{theta 2nd order PWGF}), as well as \eqref{eq:pwhf}, is still high. The main difficulty comes from the computation of the metric tensor $G$. More precisely, directly evaluating $G$ requires solving $m$ different elliptic PDEs where $m$ is the number of parameters in the pushforward map $T_{\theta}$, in which $m$ can be very large if we choose $T_{\theta}$ to be neural networks. In \cite{liu2022neural}, a bi-level minimization scheme is proposed to circumvent this challenge. By introducing several auxiliary functions, the term $G(\theta)^{-1}\theta$ is calculated as the critical point of a min-max problem. However, it may still be expensive to solve such optimization problems in general. In this paper, we develop another strategy by introducing a simplified metric $\widehat{G}$ and use it to replace $G$ in the derivation. This new metric not only yields much simpler implementation and more effective computations, but also enables us to establish a  theoretical estimate in Wasserstein metric to quantify the error of the approximation. Furthermore, our investigation shows that both the computation and theory can be extended to the general case where $\widehat{G}$ is not necessarily invertible, but positive semi-definite with constant rank. Numerical results also demonstrate excellent approximation accuracy of this new metric.

\begin{definition}[Simplified pullback Wasserstein metric in $\Theta$]
Let $T_{\theta}$ be the pushforward map and $\lambda$ be the reference distribution, we define the simplified pullback Wasserstein metric on $\mathcal{P}_\Theta$ as:
\begin{align}
    \widehat{G}(\theta)=\int  \partial_{\theta} T_\theta(z)^{\top} \partial_{\theta} T_\theta(z)~d\lambda(z).\label{relaxed metric tensor}
\end{align}
\end{definition}

It is worth mentioning that this definition is directly inspired by \eqref{eq:G} and \eqref{Hodge Dcom}. A more in-depth motivation is influenced by the work of Otto \cite{doi:10.1081/PDE-100002243} in which the Wasserstein metric is defined through an isometric submersion from  the space of push-forward operators $\mathcal{O}$ onto the Wasserstein manifold of density $\mathcal{P}(\mathbb{R}^d)$. Here by the space of push-forward operators we meant that $\mathcal{O}$ is the set of smooth transformations $T$ on $\mathbb{R}^{d}$. Let us consider the map $\mathscr{T}:\Theta\ni\theta\mapsto T_\theta\in\mathcal{O}$. We define the pullback metric on 
$\Theta$ as
\[ \widehat{G}(\theta) = \mathscr{T}^* g_{L^2(\lambda)}, \]
where $\mathscr{T}^*$ is the pullback operation induced by $\mathscr{T}$ and $g_{L^2(\lambda)}$ is the metric on the tangent space of $\mathcal{O}$.  To evaluate $\widehat{G}(\theta)$, we consider any curve $\{\theta(t)\}_{-\epsilon \leq t \leq \epsilon}$ in $\Theta$, and denote $\dot{\theta}(0) = \frac{d}{dt}\theta(t)|_{t=0}$. By the definition of pullback operation, we have
\[ \widehat{G}(\dot\theta(0), \dot\theta(0)) = g_{L^2(\lambda)}\left(\frac{d}{dt}T_{\theta(t)}|_{t=0}, \frac{d}{dt}T_{\theta(t)}|_{t=0}\right) 
= \dot\theta(0)^\top \left( \int_{\mathbb{R}^d} \partial_\theta T_\theta(z)^\top \partial_\theta T_\theta(z) \, d\lambda(z)\right)\dot\theta(0). \]
Thus we obtain \eqref{relaxed metric tensor}. More precisely, we have $\widehat{G}(\theta)=(\widehat{G}(\theta)_{ij})_{1\leq i,j\leq m}$, and  
\begin{equation*}
\widehat{G}(\theta)_{ij}=\sum_{k=1}^d\int_{\mathbb{R}^d}\partial_{\theta_i}T^{(k)}_\theta(z)\cdot\partial_{\theta_j}T^{(k)}_\theta(z)\,d\lambda(z),
\end{equation*}
where $T^{(k)}_{\theta}:\mathbb{R}^{d}\to \mathbb{R}$ is the $k$th component of $T_{\theta}:\mathbb{R}^{d} \to \mathbb{R}^{d}$.
The form of $\widehat{G}(\theta)$ indicates it as an $m\times m$ semi-positive definite matrix for any $\theta\in\Theta.$

Replacing $G$ by $\widehat{G}$ in the expression of $L$, we can establish similar results as those stated in Theorem \ref{Theorem exact PWHF}. 

\begin{theorem}
Consider the variational problem 
\begin{align}
    \label{vari relaxed PWHF}
    \widehat{\mathcal{I}}^\Theta(\theta)= \inf_{\theta}\cbr[2]{\int_0^T \widehat{L}(\theta, \dot{\theta})dt: \rho_{\theta}|_{t=0}=\rho_0, \rho_{\theta}|_{t=T}=\rho_T },
\end{align}
where $\widehat{L}$ is defined as 
\begin{equation}
    \widehat{L}(\theta, \dot{\theta})=\frac{1}{2}\dot\theta^\top \widehat{G}(\theta)   \dot\theta-F(\theta).
\end{equation}
The Euler-Lagrange equation for the variation formulation 
is
\begin{equation}
\label{relaxed theta 2nd order PWGF}
\widehat{G}(\theta)\ddot{\theta} + \sum_{k=1}^m \dot{\theta}_{k}\partial_{\theta_k}\widehat{G}(\theta) \dot{\theta} - \frac{1}{2} [\dot{\theta}^{\top} \partial_{\theta_k} \widehat{G}(\theta) \dot{\theta}]_{k=1}^{m} = -\nabla_{\theta}F(\theta).
\end{equation}

\end{theorem}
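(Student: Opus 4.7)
The proof will parallel exactly the argument given for Theorem~\ref{Theorem exact PWHF}, since the Lagrangian $\widehat{L}(\theta,\dot\theta) = \frac{1}{2}\dot\theta^\top \widehat{G}(\theta)\dot\theta - F(\theta)$ has the same quadratic-in-velocity structure as $L$, with $\widehat{G}$ playing the formal role of $G$. The only features of $G$ that were used in that earlier derivation were (i) symmetry and (ii) differentiability in $\theta$; both clearly hold for $\widehat{G}$ in view of its definition \eqref{relaxed metric tensor}, where differentiation under the integral sign is justified by standard smoothness assumptions on $T_\theta$ and the reference measure $\lambda$.

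My plan is thus to write the Euler-Lagrange equation associated to the variational problem \eqref{vari relaxed PWHF},
\begin{equation*}
\frac{d}{dt}\frac{\partial \widehat{L}}{\partial \dot\theta}(\theta,\dot\theta) \;=\; \frac{\partial \widehat{L}}{\partial \theta}(\theta,\dot\theta),
\end{equation*}
and then compute each side separately. First I would differentiate $\widehat{L}$ in $\dot\theta$: using symmetry of $\widehat{G}(\theta)$ we obtain $\partial_{\dot\theta}\widehat{L} = \widehat{G}(\theta)\dot\theta$, and then the total time derivative yields
\begin{equation*}
\frac{d}{dt}\bigl(\widehat{G}(\theta)\dot\theta\bigr) \;=\; \widehat{G}(\theta)\ddot\theta + \sum_{k=1}^{m}\dot\theta_k\,\partial_{\theta_k}\widehat{G}(\theta)\,\dot\theta,
\end{equation*}
by the chain rule. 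Next I would differentiate $\widehat{L}$ in $\theta$: the potential $F$ contributes $-\nabla_\theta F(\theta)$, while the kinetic part contributes componentwise $\frac{1}{2}\dot\theta^\top \partial_{\theta_k}\widehat{G}(\theta)\dot\theta$, collected as the $m$-vector $\frac{1}{2}[\dot\theta^\top \partial_{\theta_k}\widehat{G}(\theta)\dot\theta]_{k=1}^{m}$. Equating the two expressions and rearranging yields \eqref{relaxed theta 2nd order PWGF} verbatim.

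Since $\widehat{G}$ need not be invertible (it is only guaranteed positive semi-definite), I would be careful not to invoke any step that requires inverting $\widehat{G}$; fortunately the Euler-Lagrange derivation is purely symbolic and never uses invertibility, so no obstacle arises here. The only genuine delicacy, which I would flag but not dwell on, is that the variational principle \eqref{vari relaxed PWHF} implicitly assumes the existence of admissible curves connecting $\rho_0$ to $\rho_T$ within $\mathcal{P}_\Theta$; the Euler-Lagrange equation is a necessary condition for any smooth minimizer, which is all that is claimed. The main ``obstacle'' is therefore cosmetic — the calculation is a direct transcription of the proof of Theorem~\ref{Theorem exact PWHF} with $G$ replaced by $\widehat{G}$, and I would simply remark on this and omit the duplicate details, or spell out the two line computations above for completeness.
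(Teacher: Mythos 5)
Your proposal is correct and follows exactly the paper's (implicit) approach: the paper omits a separate proof for this theorem precisely because the Euler--Lagrange computation from Theorem~\ref{Theorem exact PWHF} carries over verbatim with $G$ replaced by $\widehat{G}$, and your two-line computation reproduces that derivation. Your remark that invertibility of $\widehat{G}$ is never needed in the purely symbolic Euler--Lagrange calculation is a correct and worthwhile observation, consistent with the paper's subsequent treatment of the singular case via the pseudo-inverse.
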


\begin{remark}
In the $1$-dimensional case, the simplified metric $\widehat{G}$ coincides with the exact matrix $G$, see \cite{liu2022neural} for a proof.
\end{remark}

The following theorems state conditions for the matrix $\widehat{G}$ to be invertible.

\begin{theorem}[Positive definiteness of $\widehat{G}$]\label{pos def relaxed metric}
The metric $\widehat{G}(\theta)$ defined in \eqref{relaxed metric tensor} is positive definite if and only if the $m$ vectors $\{\partial_{\theta_k} T_{\theta}: k=1, \cdots, m\}$ are linearly independent in $L^2(\mathbb{R}^d;\mathbb{R}^d,\lambda)$.
\end{theorem}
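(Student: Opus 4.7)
The plan is to recognize $\widehat{G}(\theta)$ as the Gram matrix of the family $\{\partial_{\theta_k} T_\theta\}_{k=1}^m$ viewed as vectors in the Hilbert space $L^2(\mathbb{R}^d;\mathbb{R}^d,\lambda)$, and then invoke the standard Gram matrix characterization of linear independence. Concretely, using the componentwise formula
\[
\widehat{G}(\theta)_{ij} = \sum_{k=1}^d\int_{\mathbb{R}^d}\partial_{\theta_i}T^{(k)}_\theta(z)\,\partial_{\theta_j}T^{(k)}_\theta(z)\,d\lambda(z) = \langle \partial_{\theta_i}T_\theta,\partial_{\theta_j}T_\theta\rangle_{L^2(\lambda)},
\]
the entries of $\widehat{G}(\theta)$ are exactly the pairwise $L^2(\lambda)$ inner products of the proposed vectors.

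The key computation is that for any $\xi=(\xi_1,\dots,\xi_m)^\top\in\mathbb{R}^m$, one has
\[
\xi^\top \widehat{G}(\theta)\xi = \int_{\mathbb{R}^d}\Bigl|\sum_{k=1}^m \xi_k \,\partial_{\theta_k} T_\theta(z)\Bigr|^2 d\lambda(z) = \Bigl\|\sum_{k=1}^m \xi_k \,\partial_{\theta_k} T_\theta\Bigr\|_{L^2(\lambda)}^2 \ge 0,
\]
which immediately yields positive semi-definiteness regardless of linear independence.

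From this identity both implications follow directly. For the $(\Leftarrow)$ direction, if the $\partial_{\theta_k} T_\theta$ are linearly independent in $L^2(\lambda)$, then any nonzero $\xi\in\mathbb{R}^m$ gives a nonzero linear combination $\sum_k \xi_k \partial_{\theta_k}T_\theta$ with strictly positive $L^2(\lambda)$-norm, hence $\xi^\top\widehat{G}(\theta)\xi>0$. For the $(\Rightarrow)$ direction I would argue by contrapositive: if the vectors are linearly dependent, there exists a nonzero $\xi$ with $\sum_k \xi_k \partial_{\theta_k}T_\theta = 0$ in $L^2(\lambda)$, and then $\xi^\top \widehat{G}(\theta)\xi = 0$ shows $\widehat{G}(\theta)$ is not positive definite.

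There is no substantial obstacle here; the only small point worth flagging is that ``linearly independent in $L^2(\mathbb{R}^d;\mathbb{R}^d,\lambda)$'' should be read as equivalence classes modulo $\lambda$-a.e.\ equality, so that ``$\sum_k \xi_k \partial_{\theta_k} T_\theta(z) = 0$'' need only hold $\lambda$-almost everywhere for the converse direction to go through — this is automatic from the vanishing of the $L^2(\lambda)$-norm, so the proof is essentially a three-line calculation plus this interpretive remark.
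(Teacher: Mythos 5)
Your proof is correct, and it is precisely the Gram-matrix argument the paper has in mind when it declares the result ``trivial'' and omits the proof: $\widehat{G}(\theta)$ is the Gram matrix of $\{\partial_{\theta_k} T_\theta\}$ in $L^2(\mathbb{R}^d;\mathbb{R}^d,\lambda)$, and $\xi^\top\widehat{G}(\theta)\xi = \|\sum_k\xi_k\,\partial_{\theta_k}T_\theta\|_{L^2(\lambda)}^2$ gives both directions at once. Your remark about interpreting linear independence modulo $\lambda$-a.e.\ equality is the right reading and costs nothing.
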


The proof of Theorem \ref{pos def relaxed metric} is trivial and hence omitted. In what follows, we also provide a sufficient condition for $\widehat{G}$ to be invertible.

  \begin{theorem} \label{metric positive definiteness}
  If the metric $G(\theta)$ defined in \eqref{metric tensor} is positive definite, then $\widehat{G}(\theta)$ is positive definite.
  \end{theorem}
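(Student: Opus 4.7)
The plan is to show the pointwise inequality $\xi^{\top}\widehat{G}(\theta)\xi \geq \xi^{\top} G(\theta) \xi$ for every $\xi \in \mathbb{R}^m$ via a $\rho_\theta$-weighted Helmholtz decomposition of a naturally associated vector field. The positive definiteness of $\widehat G$ then follows immediately from that of $G$.

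Fix $\theta\in\Theta$ and $\xi\in\mathbb{R}^m$. Define the vector field on $\mathbb{R}^d$ by
\[
V_\xi(x) := \sum_{j=1}^m \xi_j\, \partial_{\theta_j} T_\theta\bigl(T_\theta^{-1}(x)\bigr).
\]
By the defining equation \eqref{Hodge Dcom} for $\psi_{\theta,j}$ combined linearly in $\xi$, the residual
\[
W_\xi(x) := V_\xi(x) - \nabla\bigl(\Psi_\theta^{\top}\xi\bigr)(x)
\]
satisfies $\nabla\cdot(\rho_\theta W_\xi) = 0$ on $\mathbb{R}^d$, with $\rho_\theta\nabla(\Psi_\theta^\top\xi)\to 0$ at infinity by the boundary condition imposed after \eqref{Hodge Dcom}. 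Thus $V_\xi = \nabla(\Psi_\theta^\top\xi) + W_\xi$ is the $\rho_\theta$-weighted Hodge/Helmholtz decomposition of $V_\xi$ into a gradient part and a divergence-free part.

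The next step is to verify orthogonality of this decomposition in $L^2(\rho_\theta;\mathbb{R}^d)$. I integrate by parts, using the decay condition to discard boundary terms:
\[
\int_{\mathbb{R}^d} \nabla(\Psi_\theta^{\top}\xi)\cdot W_\xi\,\rho_\theta\,dx = -\int_{\mathbb{R}^d} (\Psi_\theta^{\top}\xi)\,\nabla\cdot(\rho_\theta W_\xi)\,dx = 0.
\]
Therefore
\[
\|V_\xi\|_{L^2(\rho_\theta)}^2 = \bigl\|\nabla(\Psi_\theta^{\top}\xi)\bigr\|_{L^2(\rho_\theta)}^2 + \|W_\xi\|_{L^2(\rho_\theta)}^2.
\]

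Finally, I convert each term back to an integral against $\lambda$ via the change of variables $x = T_\theta(z)$, which sends $\rho_\theta\,dx$ to $d\lambda$. For the left-hand side this gives $\|V_\xi\|_{L^2(\rho_\theta)}^2 = \int |\sum_j\xi_j\partial_{\theta_j}T_\theta(z)|^2\,d\lambda(z) = \xi^\top \widehat{G}(\theta)\xi$ by \eqref{relaxed metric tensor}. For the first term on the right-hand side I similarly obtain $\int \xi^\top \nabla\Psi_\theta(T_\theta(z))\nabla\Psi_\theta(T_\theta(z))^\top \xi\,d\lambda(z) = \xi^\top G(\theta)\xi$ by \eqref{eq:G}. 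Combining these,
\[
\xi^{\top}\widehat{G}(\theta)\xi \;=\; \xi^{\top}G(\theta)\xi + \|W_\xi\|_{L^2(\rho_\theta)}^2 \;\geq\; \xi^{\top}G(\theta)\xi,
\]
so if $G(\theta)\succ 0$ then $\xi^\top\widehat{G}(\theta)\xi>0$ for every $\xi\neq 0$, proving $\widehat{G}(\theta)\succ 0$.

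The only delicate point is the integration-by-parts step that yields orthogonality: one must check that the boundary terms at infinity really do vanish for the functions $\Psi_\theta^\top\xi$ and the fields $\rho_\theta W_\xi$. This is the main (and essentially only) obstacle; it relies on the decay hypothesis $\lim_{x\to\infty}\rho_\theta(x)\nabla\psi_{\theta,j}(x)=0$ built into the definition of $\Psi_\theta$ together with enough regularity of $T_\theta$ and $\lambda$ to make $V_\xi \in L^2(\rho_\theta)$, which is exactly what is needed for $\widehat{G}(\theta)$ to be finite in the first place.
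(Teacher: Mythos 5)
Your proposal is correct and takes essentially the same approach as the paper: both identify $\partial_\theta T_\theta\circ T_\theta^{-1}$ as the gradient part $\nabla(\Psi_\theta^\top\cdot)$ plus a $\rho_\theta$-divergence-free residual, show the cross terms vanish by integration by parts, and conclude $\xi^\top\widehat{G}\xi = \xi^\top G\xi + \|W_\xi\|^2_{L^2(\rho_\theta)} \ge \xi^\top G\xi$. The only cosmetic difference is that you contract with $\xi$ before expanding, while the paper works componentwise with $w_i$ and sums afterward.
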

  \begin{proof}
  Denote $w_i = \frac{\partial T_{\theta}}{\partial \theta_i}\circ T_{\theta}^{-1} - \nabla \psi_i$ where $-\Delta_{\rho_{\theta}} \psi_i = - \nabla \cdot (\rho_{\theta} \frac{\partial T_{\theta}}{\partial \theta_i}\circ T_{\theta}^{-1})$ for each $i=1,\dots,m$. Then $\nabla \cdot (\rho_{\theta}w_i) = 0$. Denote $W = [w_1,\dots,w_m] \in \mathbb{R}^{d\times m}$. For any $\thetadot \in \mathbb{R}^{m}$, we have 
  \begin{align*}
      \thetadot^{\top} \widehat{G}(\theta) \thetadot
      & = \thetadot^{\top} G(\theta) \thetadot + \sum_{i,j} \int \nabla \psi_i \cdot w_j \rhotheta dx \thetadot_i \thetadot_j + \int |W \thetadot|^2 \rhotheta dx \\
      & = \thetadot^{\top} G(\theta) \thetadot  + \int |W \thetadot|^2 \rhotheta dx \\
      & \ge \thetadot^{\top} G(\theta) \thetadot,
\end{align*}
where we used the fact $\int \nabla \psi_i \cdot w_j \rhotheta dx = - \int \psi_i \nabla \cdot (\rhotheta w_j) dx = 0$ for all $i,j$ in the second equality. Hence $G(\theta) \succ 0$ implies $\widehat{G}(\theta) \succ 0$.
  \end{proof} 
  
However, the converse of Theorem \ref{metric positive definiteness} is not necessarily true, as shown in the following counter example. 
  \begin{example}
  On $\mathbb{R}^4$, let us consider $T_{\theta}(x)=x+\theta_1 \vec{v}_1(x)+\theta_2\vec{v}_2(x)$, where $\vec{v}_1(x)=(-x_2, x_1, 0, 0)^\top$, $\vec{v}_2(x)=(0, 0, -x_4, x_3)^\top$ are two rotational fields. One can verify that $T_\theta$ is invertible for any $\theta=(\theta_1,\theta_2)$. We set reference $\lambda = \mathcal{N}(0,I_4)$. Direct calculation shows that $\frac{\partial T_{\theta}}{\partial \theta_1} = \vec{v}_1$, $\frac{\partial T_{\theta}}{\partial \theta_2} = \vec{v}_2$ are linearly independent. By Theorem \ref{pos def relaxed metric}, $\widehat{G}(\theta)$ is positive definite for any $\theta\in\mathbb{R}^2$. On the other hand, we examine the positive definiteness of $G(\theta)$ at $\theta=(0,0)$. To find it, we first compute $\rho_\theta = \mathcal{N}(0, \textrm{diag}((1+\theta_1^2)I_2,(1+\theta_2^2)I_2))$. Then we solve
    \begin{equation}
      -\nabla\cdot(\rho_\theta\nabla\psi_1(x)) = -\nabla\cdot(\rho_\theta\frac{\partial T_\theta}{\partial\theta_1}\circ T_\theta^{-1}(x)), \quad \theta=(0,0).\label{hodge decomposition at theta=0,0}
    \end{equation}
    The right hand side equals
    \begin{equation*}
      -\nabla\rho_\theta(x) \cdot \vec{v}_1(T_\theta^{-1}(x)) - \rho_\theta(x)\nabla\cdot\vec{v}_1(T_\theta^{-1}(x)).
    \end{equation*}
    Recall at $\theta=(0,0)$, $\rho_\theta = \mathcal{N}(0, I_4)$, we verify that both $\nabla\rho_{\theta}(x)\cdot\vec{v}_1(T_\theta^{-1}(x)) = 0$ and $\nabla\cdot \vec{v}_1(T_\theta^{-1}(x)) = 0$. Thus the right hand side of \eqref{hodge decomposition at theta=0,0} equals $0$, so does $\nabla\psi_1=0$. By similar argument, $\nabla\psi_2 = 0$. Therefore, the metric tensor $G((0,0))=O_2$, the $2 \times 2$ zero matrix, which is not positive definite.
  \end{example}

\subsection{Error bound for the continuous time PWHF}
\label{pseudo inverse error analysis}
In this subsection, we give error estimates on the continuous time dynamics \eqref{relaxed theta 2nd order PWGF}. We assume that the matrix $\widehat{G}$ has constant rank for $\theta\in \Theta$. The main results of this section are Theorem \ref{theorem: W2 error estimation} (error bound on $\rho$) and Theorem \ref{theorem: error estimation on Phi} (error bound on $\Phi$). We shall express \eqref{relaxed theta 2nd order PWGF} as a Hamiltonian system first.

Let us start by recalling some properties of the pseudo inverse operator for a positive semi-definite matrix.

As introduced before, $\widehat{G}(\theta)^{\dagger}$ is the Penrose-Moore pseudo inverse of the matrix $\widehat{G}(\theta)$. We write them as $\widehat{G}^{\dagger}$ and $\widehat{G}$ respectively for notation simplicity below. The Penrose-Moore pseudo inverse operator is a well-defined, one-to-one linear mapping.  
In addition, by \cite[Theorem 4.3]{doi:10.1137/0710036}, we know that if $\widehat{G}$ has constant rank, then
\begin{align}
\label{derivative pseudo inverse}
        \partial_{\theta_k} \widehat{G}^{\dagger}=-\widehat{G}^{\dagger}(\partial_{\theta_k} \widehat{G})\widehat{G}^{\dagger}+\widehat{G}^{\dagger}\widehat{G}^{\dagger}(\partial_{\theta_k}\widehat{G})(I-\widehat{G}\widehat{G}^{\dagger})+(I-\widehat{G}^{\dagger}\widehat{G})(\partial_{\theta_k} \widehat{G})\widehat{G}^{\dagger}\widehat{G}^{\dagger}.
\end{align}%
Further, Penrose-Moore pseudo inverse satisfies the following estimate for any $\eta \in \mathbb{R}^{m}$:
    \begin{align}
    \label{norm pseudo inverse}
        |\widehat{G}(\theta)^{\dagger} \eta |\leq \frac{1}{\lambda_{\min}(\widehat{G}(\theta))}|\eta|,
    \end{align}
where $|\cdot|$ is the standard Euclidean norm of vectors and $\lambda_{\min}(\widehat{G}(\theta))$ is the smallest \emph{nonzero} eigenvalue of $\widehat{G}(\theta)$.


\subsubsection{Parameterization of the potential energy} To obtain the error estimates, we need to express the potential energy in terms of the pushforward map, which is given in this subsection. 

If we have $\rho=T_{\sharp}\lambda$ for some reference density $\lambda$ and push-forward map $T$, the connection between $\rho$ and $T$ is explicitly given by
\begin{align}
    \label{eq: continuity eq for map}
    \rho(x)=\lambda\circ T^{-1}(x)\textrm{det}(\frac{d}{dx}T^{-1}(x)).
\end{align}

Consider the parameterized push-forward map $T_{\theta}$ as well as the density function $\rho_{\theta}=T_{\theta\sharp}\lambda$, we have
\begin{align}
    \left[\partial_{\theta}\rho_{\theta}+\textrm{div}_X \left(\rho_{\theta}\cdot\partial_{\theta}T_{\theta}\circ T_{\theta}^{-1}\right)\right]\circ T_{\theta}=0,
\end{align}
where $\textrm{div}_X$ denotes the divergence operator with respect to $x$.

In the following, we consider the variation of $\mathcal{F}(\rho)$, and denote its function value at $x$ as $\frac{\delta}{\delta \rho}\mathcal{F}(\rho, x)$. Assume $\frac{\delta}{\delta\rho}\mathcal{F}(\rho, \cdot)$ is smooth for each $\rho$, then for $F(\theta)=\mathcal{F}(\rho_{\theta})$ we can verify:
\begin{align}
\label{eq: para grad f}
    \nabla_{\theta} F(\theta) 
    & = \int \frac{\delta \mathcal{F}}{\delta \rho}(\rho_{\theta}(x),x) \partial_{\theta}\rho_{\theta}(x) \, dx \nonumber\\
    & = \int \frac{\delta \mathcal{F}}{\delta \rho}(\rho_{\theta}(T_{\theta}(z)),T_{\theta}(z)) \partial_{\theta}\rho_{\theta}(T_{\theta}(z)) \det(\nabla_z T_{\theta}(z)) \, dz \nonumber\\
    & = - \int \frac{\delta \mathcal{F}}{\delta \rho}(\rho_{\theta}(T_{\theta}(z)),T_{\theta}(z)) \textrm{div}_X\left[\rho_{\theta}(T_{\theta}(z))\partial_{\theta}T_{\theta}(z)\right] \det(\nabla_z T_{\theta}(z)) \, dz \nonumber\\
    & = - \int \frac{\delta \mathcal{F}}{\delta \rho}(\rho_{\theta}(x),x) \textrm{div}_X\left[\rho_{\theta}(x)\partial_{\theta}T_{\theta}\circ T_{\theta}^{-1}(x)\right]  \, dx \nonumber\\
    & = \int \nabla \frac{\delta \mathcal{F}}{\delta \rho}(\rho_{\theta}(T_{\theta}(z)),T_{\theta}(z))^\top \partial_{\theta}T_{\theta}(z) \rho_{\theta}(T_{\theta}(z))\det(\nabla_z T_{\theta}(z)) \, dz\nonumber \\
    &=\int \partial_{\theta}T_{\theta}(z)^\top \nabla \frac{\delta}{\delta\rho}\mathcal{F}( T_{\theta\sharp}\lambda(\cdot), \cdot)\circ T_{\theta}(z)~d\lambda(z).
\end{align}

\subsubsection{Simplified Hamiltonian dynamics with the pseudo inverse operator} \label{relaxedhd-pio}
In this part, we study properties of the parameterized dynamics \eqref{relaxed theta 2nd order PWGF} in detail. From the equivalence between the Lagrange and Hamiltonian mechanics, the second-order ODE \eqref{relaxed theta 2nd order PWGF} is equivalent to a first order Hamiltonian system with Hamiltonian
\begin{align}
    \label{pseudo Hamiltonian}
    H(\theta, p)=\frac{1}{2}p^{\top}\widehat{G}^{\dagger}p + F(\theta).
\end{align}

\begin{remark}
To derive the Hamiltonian system from the Lagrange mechanics, we require $\dot\theta\in \mathcal{R}(\widehat{G})$. This condition can be verified as long as the initial value $\dot\theta (0)$ lies in the range $\widehat{G}(\theta(0))$, which shall be proved in Lemma \ref{res derivative}.    
\end{remark}

In the remaining part of this section, we make the following assumption:
\begin{assumption}
\label{assumtion: pseudo inverse sv}
Assume that $\widehat{G}(\theta)$ is a smooth function of $\theta$ and $$\lambda_{\min, \Theta}:=\inf_{\theta\in \Theta}\lambda_{\min}(\widehat{G}(\theta))>0,$$ where $\lambda_{\min}(\widehat{G}(\theta))$ is the smallest positive eigenvalue of matrix $\widehat{G}(\theta)$. We also assume $$C_{\Theta}:=\sup_{\theta\in \Theta}\max_{k}\|\partial_{\theta_k}\widehat{G}(\theta)\|<\infty,$$ where $\|\partial_{\theta_k}\widehat{G}(\theta)\|$ is the standard matrix 2-norm of $\partial_{\theta_k}\widehat{G}(\theta)$.
\end{assumption}
\begin{remark}
    Under assumption \ref{assumtion: pseudo inverse sv}, the smallest nonzero singular of $\widehat{G}(\theta)$ is a smooth function of $\theta$, and $\lambda_{min, \Theta}>0$ implies that $\widehat{G}(\theta)$ is constant rank, hence \eqref{derivative pseudo inverse} holds.
\end{remark}

\begin{proposition}
\label{prop: hamiltonian eq exact}
    Under Assumption \ref{assumtion: pseudo inverse sv}, the following is the Hamiltonian dynamics of \eqref{pseudo Hamiltonian}:
    \begin{subequations}
\label{general PWGF pseudo}
    \begin{align}
        \dot\theta &= \widehat{G}^{\dagger}p, \label{eq:gPWHF-theta}\\
        \dot p &= \frac{1}{2}[(\widehat{G}^{\dagger}p)^{\top} (\partial_{\theta_k}\widehat{G})\widehat{G}^{\dagger}p]_{k=1}^{m}-\nabla_{\theta}F(\theta)-S(\theta, p), \label{eq:gPWHF-p}
    \end{align}
\end{subequations}
    where
    \begin{align}
    \label{eq:S-def}
        S(\theta, p)=\frac{1}{2}\left[p^{\top}\widehat{G}^{\dagger}\widehat{G}^{\dagger}(\partial_{\theta_k}\widehat{G})(I-\widehat{G}\widehat{G}^{\dagger})p+p^{\top}(I-\widehat{G}^{\dagger}\widehat{G})(\partial_{\theta_k}\widehat{G})\widehat{G}^{\dagger}\widehat{G}^{\dagger}p\right]_{k=1}^{m}.
    \end{align}
    Moreover, \eqref{general PWGF pseudo} is equivalent to \eqref{relaxed theta 2nd order PWGF}.
\end{proposition}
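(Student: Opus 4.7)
The plan is to apply Hamilton's equations directly to the Hamiltonian $H(\theta,p)=\frac{1}{2}p^{\top}\widehat{G}^{\dagger}p+F(\theta)$ and then match the resulting first-order system to the second-order Euler-Lagrange equation \eqref{relaxed theta 2nd order PWGF}. For $\dot\theta=\partial_{p}H$, since $\widehat{G}$ is symmetric positive semi-definite its Moore-Penrose inverse $\widehat{G}^{\dagger}$ is also symmetric, so I immediately get $\dot\theta=\widehat{G}^{\dagger}p$, which is \eqref{eq:gPWHF-theta}. The main computational work is in $\dot p=-\partial_{\theta}H$; here I would split the derivative as $\partial_{\theta_k}\!\big(\tfrac12 p^{\top}\widehat{G}^{\dagger}p\big)+\partial_{\theta_k}F(\theta)$ and apply the constant-rank derivative formula \eqref{derivative pseudo inverse} (valid under Assumption \ref{assumtion: pseudo inverse sv}, which guarantees the constant-rank hypothesis of the cited Golub--Pereyra theorem).

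Substituting \eqref{derivative pseudo inverse} into $\tfrac12 p^{\top}(\partial_{\theta_k}\widehat{G}^{\dagger})p$ produces three contributions. The first, $-\tfrac12 p^{\top}\widehat{G}^{\dagger}(\partial_{\theta_k}\widehat{G})\widehat{G}^{\dagger}p$, reshapes (using symmetry of $\widehat{G}^{\dagger}$) into $-\tfrac12(\widehat{G}^{\dagger}p)^{\top}(\partial_{\theta_k}\widehat{G})(\widehat{G}^{\dagger}p)$; moving this to $-\partial_{\theta_k}H$ flips the sign and gives exactly the bracketed main quadratic term in \eqref{eq:gPWHF-p}. The remaining two contributions, involving the orthogonal complements $I-\widehat{G}\widehat{G}^{\dagger}$ and $I-\widehat{G}^{\dagger}\widehat{G}$, repackage after the sign flip into $-S(\theta,p)$ with $S$ as defined in \eqref{eq:S-def}. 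Adding $-\nabla_{\theta}F(\theta)$ completes \eqref{eq:gPWHF-p}.

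For the equivalence with the second-order ODE \eqref{relaxed theta 2nd order PWGF}, the natural direction is to differentiate the momentum relation $p=\widehat{G}(\theta)\dot\theta$ along a solution of \eqref{relaxed theta 2nd order PWGF}, producing $\dot p=\widehat{G}(\theta)\ddot\theta+\sum_k\dot\theta_k(\partial_{\theta_k}\widehat{G})\dot\theta$, and then to substitute \eqref{relaxed theta 2nd order PWGF} to recover \eqref{eq:gPWHF-p}. The key observation for matching is that $\dot\theta\in\mathcal{R}(\widehat{G})$ (which is guaranteed by the lemma Lemma \ref{res derivative} referenced in the preceding remark), which forces $p=\widehat{G}\dot\theta\in\mathcal{R}(\widehat{G})$; since $\widehat{G}$ is symmetric and hence $\widehat{G}\widehat{G}^{\dagger}=\widehat{G}^{\dagger}\widehat{G}$ is the orthogonal projector onto $\mathcal{R}(\widehat{G})$, we have $(I-\widehat{G}\widehat{G}^{\dagger})p=(I-\widehat{G}^{\dagger}\widehat{G})p=0$, so $S(\theta,p)\equiv 0$ along compatible trajectories. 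Under the same compatibility, $\widehat{G}^{\dagger}p=\widehat{G}^{\dagger}\widehat{G}\dot\theta=\dot\theta$, and the quadratic term in \eqref{eq:gPWHF-p} reduces to $\tfrac12[\dot\theta^{\top}(\partial_{\theta_k}\widehat{G})\dot\theta]_{k=1}^m$, which is precisely the term appearing in \eqref{relaxed theta 2nd order PWGF}. Conversely, starting from a Hamiltonian solution one shows $\mathcal{R}(\widehat{G})$ is invariant for $p$ (again by Lemma \ref{res derivative}) and reverses the same algebra.

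The main obstacle I anticipate is the bookkeeping surrounding the pseudo-inverse derivative: the formula \eqref{derivative pseudo inverse} has three terms with different projector structures, and one must carefully exploit the symmetry of $\widehat{G}$ together with the projector identities $\widehat{G}\widehat{G}^{\dagger}=\widehat{G}^{\dagger}\widehat{G}$ to first isolate the clean quadratic term from the ``residual'' $S(\theta,p)$, and then to verify $S$ vanishes on the physical submanifold $\{p\in\mathcal{R}(\widehat{G}(\theta))\}$ so that the Hamiltonian system truly reduces to the Euler-Lagrange dynamics. Everything else is routine differentiation.
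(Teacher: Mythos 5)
Your derivation of the Hamiltonian equations is exactly the paper's route: apply Hamilton's equations to $H=\frac{1}{2}p^{\top}\widehat{G}^{\dagger}p+F(\theta)$, use symmetry of $\widehat{G}^{\dagger}$ for the $\dot\theta$ equation, and substitute the constant-rank pseudo-inverse derivative formula \eqref{derivative pseudo inverse} into $-\frac{1}{2}[p^{\top}\partial_{\theta_k}(\widehat{G}^{\dagger})p]_{k}-\nabla_\theta F$ to isolate the clean quadratic term from $S(\theta,p)$. You go somewhat further than the paper's (very terse) proof in also sketching the equivalence with \eqref{relaxed theta 2nd order PWGF}; one small slip there is that $p=\widehat{G}\dot\theta\in\mathcal{R}(\widehat{G})$ holds automatically, not because $\dot\theta\in\mathcal{R}(\widehat{G})$ --- the latter is needed only to recover $\dot\theta=\widehat{G}^{\dagger}p$ so the two quadratic terms match --- but this is a cosmetic issue and your overall outline is sound and consistent with how the paper handles the range condition via the preceding remark and Lemma \ref{res derivative}.
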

\begin{proof}
    First of all, we have
    \begin{align*}
        \frac{d}{dt}\theta&=\nabla_{p}H(\theta, p)=\widehat{G}^{\dagger}p, 
    \end{align*}
    which gives the equation in \eqref{eq:gPWHF-theta}. As for \eqref{eq:gPWHF-p}, we apply \eqref{derivative pseudo inverse} to
    \begin{align*}
        \frac{d}{dt}p=-\frac{1}{2}[p^{\top} \partial_{\theta_k}(\widehat{G}^{\dagger})p]_{k=1}^{m}-\nabla_{\theta}F(\theta)
    \end{align*}
\end{proof}


Similarly, we can also prove that the dynamics \eqref{theta 2nd order PWGF} is equivalent to the Hamiltonian system \eqref{general PWGF pseudo} with $\widehat{G}$ replaced by $G$ under the assumption that $G$ is constant rank. We note that the term $S(\theta, p)$ in \eqref{eq:S-def} is the extra term mentioned in Remark \ref{rmk:singular-G} if replacing $\widehat{G}$ by $G$. $S$ vanishes if $\widehat{G}$ or $G$ is invertible.
The Hamiltonian structure guarantees the boundedness of $|\dot{\theta}|$, which further allows us to give a prior estimation on the error.

\begin{lemma}
    \label{lemma: bounded dot theta}
    Assume the potential $F(\theta)$ can be bounded from below, i.e., $F_{\min} := \inf_{\theta\in\Theta}F(\theta)>-\infty$. Suppose $(\theta, p)$ is solved from \eqref{general PWGF pseudo} with initial values $(\theta(0),p(0))$, and we denote $H_0 = H(\theta(0),p(0))$. Under Assumption \ref{assumtion: pseudo inverse sv}, $|\dot{\theta}|$ can be uniformly upper bounded by
 \begin{equation}
 \label{ineq: bound dot theta}
 |\dot{\theta}|\leq \sqrt{\frac{2(H_0-F_{\min})}{\lambda_{\min,\Theta}}}. 
 \end{equation}
\end{lemma}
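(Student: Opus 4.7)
The plan is to exploit conservation of the Hamiltonian $H$ along trajectories of \eqref{general PWGF pseudo}, and then translate a bound on $p^\top \widehat{G}^\dagger p$ into a bound on $|\dot\theta|^2 = |\widehat{G}^\dagger p|^2$ via the spectral structure of the pseudo-inverse.

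First I would verify that $H$ is a conserved quantity. Since \eqref{general PWGF pseudo} is the canonical Hamiltonian system associated with $H(\theta,p) = \tfrac{1}{2} p^\top \widehat{G}(\theta)^\dagger p + F(\theta)$ (as established in Proposition \ref{prop: hamiltonian eq exact}), we have $\frac{d}{dt} H(\theta(t), p(t)) = \partial_\theta H \cdot \dot\theta + \partial_p H \cdot \dot p = 0$ along any solution. Hence $H(\theta(t),p(t)) = H_0$ for all $t$, which immediately yields
\begin{equation*}
\tfrac{1}{2}\, p(t)^\top \widehat{G}(\theta(t))^\dagger p(t) = H_0 - F(\theta(t)) \le H_0 - F_{\min}.
\end{equation*}

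Next I would convert this into a bound on $|\dot\theta|^2$. Using \eqref{eq:gPWHF-theta}, $\dot\theta = \widehat{G}^\dagger p$ lies in $\mathcal{R}(\widehat{G}^\dagger) = \mathcal{R}(\widehat{G})$. Diagonalizing $\widehat{G}(\theta)$ on its range as $\widehat{G} = \sum_{i} \lambda_i v_i v_i^\top$ (with strictly positive $\lambda_i \ge \lambda_{\min}(\widehat{G}(\theta)) \ge \lambda_{\min,\Theta}$) and decomposing $p$ accordingly as $p = \sum_i p_i v_i + p_\perp$, one computes
\begin{equation*}
|\widehat{G}^\dagger p|^2 = \sum_i \frac{p_i^2}{\lambda_i^2} \le \frac{1}{\lambda_{\min,\Theta}} \sum_i \frac{p_i^2}{\lambda_i} = \frac{1}{\lambda_{\min,\Theta}}\, p^\top \widehat{G}^\dagger p,
\end{equation*}
since $1/\lambda_i^2 \le 1/(\lambda_{\min,\Theta}\,\lambda_i)$ for each active eigenvalue. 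Combining with the previous display gives the asserted inequality \eqref{ineq: bound dot theta}.

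The only subtlety is ensuring that one may legitimately drop the kernel component $p_\perp$ of $p$ in both quadratic forms, which follows from the symmetry of $\widehat{G}^\dagger$ and the fact that its kernel coincides with that of $\widehat{G}$; I would mention this in one line. A secondary, mild issue is that the uniform bound $\lambda_{\min,\Theta} > 0$ from Assumption \ref{assumtion: pseudo inverse sv} is essential here—without it the fibrewise bound $1/\lambda_{\min}(\widehat{G}(\theta(t)))$ might blow up along the trajectory—but given the assumption the estimate is uniform in $t$. No further ingredient is needed; in particular no Grönwall argument is required because the Hamiltonian is an exact first integral.
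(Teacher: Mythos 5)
Your proof is correct and takes essentially the same route as the paper: both arguments rest on conservation of $H$ along the flow and then a spectral bound in terms of $\lambda_{\min,\Theta}$. The only cosmetic difference is that the paper states the spectral step as $\dot\theta^\top\widehat{G}\dot\theta\ge\lambda_{\min}(\widehat{G})|\dot\theta|^2$ for $\dot\theta\in\mathcal{R}(\widehat{G})$, whereas you write the equivalent bound $|\widehat{G}^\dagger p|^2\le\lambda_{\min,\Theta}^{-1}\,p^\top\widehat{G}^\dagger p$ via the eigendecomposition; these coincide after substituting $\dot\theta=\widehat{G}^\dagger p$ and using $\widehat{G}^\dagger\widehat{G}\widehat{G}^\dagger=\widehat{G}^\dagger$.
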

\begin{proof}
    Since the value of the Hamiltonian $H(\theta,p)$ is conserved for any time $t$ when $(\theta,p)$ solves the Hamiltonian system \eqref{general PWGF pseudo}, we have
\begin{equation}
\label{eq:H0}
  \frac{1}{2}\dot\theta^\top\widehat{G}(\theta)\dot\theta + F(\theta) = H(\theta(0),p(0)), \quad \textrm{for any } t\geq 0.
\end{equation}
By \eqref{norm pseudo inverse} and the fact that $\dot\theta=\widehat{G}^{\dagger}p\in \mathcal{R}(\widehat{G})$, we have
\begin{equation}
\label{eq:dottheta-bound}
    \frac{1}{2}\dot\theta^\top\widehat{G}(\theta)\dot\theta + F(\theta) \geq \frac{1}{2}\lambda_{\min}(\widehat{G}(\theta))|\dot\theta|^2 + F_{\min}\geq \frac{1}{2}\lambda_{\min,\Theta}|\dot\theta|^2 + F_{\min}.
\end{equation}
Combining \eqref{eq:H0} and \eqref{eq:dottheta-bound} yields \eqref{ineq: bound dot theta}.
\end{proof}

\subsubsection{Simplification of PWHFs}
In this subsection, we show that the Hamiltonian system \eqref{general PWGF pseudo} can be simplified even when $\widehat{G}$ is not invertible. Specifically, we shall show that $\widehat{G}(\theta(t))\dot{\theta}(t)-p(t)$ remains zero as long as its initial value is zero. In this case, we can also show that the term $S(\theta(t), p(t))=0$ for all $t$. To this end, we need to investigate properties of the metric $\widehat{G}$. From its definition \eqref{relaxed metric tensor}, we can see that $\widehat{G}$ is the inner product matrix for the functions $\{\partial_{\theta_k} T_{\theta}: k=1, \cdots, m\}$ in the $L^2(\mathbb{R}^d;\mathbb{R}^d,\lambda)$ space. We define $\mathcal{Q}^{\theta}:=\mathrm{span}\{\partial_{\theta_k} T_{\theta}(\cdot): k=1, \cdots, m\}$ to be the subspace of $L^2(\mathbb{R}^d;\mathbb{R}^d, \lambda)$. The following lemma gives the orthogonal projection operator from $L^2(\mathbb{R}^d;\mathbb{R}^d, \lambda)$ onto $\mathcal{Q}^{\theta}$.
\begin{lemma}
\label{lemma: K theta proj}
Define kernel $K_{\theta}(\cdot, \cdot):\mathbb{R}^d\times \mathbb{R}^d\rightarrow \mathcal{M}(\mathbb{R}^d)$, where $\mathcal{M}(\mathbb{R}^d)$ is the space of $d\times d $ matrices,
\begin{align}
    \label{kernel func}
         K_\theta(z', z) = \partial_{\theta} T_\theta(z') \widehat{G}^{\dagger}(\theta)  \partial_{\theta} T_\theta(z)^\top,
\end{align}
and the linear operator $\mathcal{K}_\theta$ on $L^2(\mathbb{R}^d;\mathbb{R}^d, \lambda)$ as 
  \begin{align}
  \label{kernel op}
           \mathcal{K}_\theta[f](\cdot) &= \partial_{\theta} T_\theta(\cdot)\widehat{G}^{\dagger}(\theta) \int \partial_{\theta} T_\theta(z)^\top f(z)~d\lambda(z)\\
           &=\int K_\theta(\cdot , z) f(z)~d\lambda(z).
  \end{align}
  where $f\in L^2(\mathbb{R}^d;\mathbb{R}^d, \lambda)$.
    Then the operator $\mathcal{K}_\theta$ is the orthogonal projection from $L^2(\mathbb{R}^d;\mathbb{R}^d, \lambda)$ onto $\mathcal{Q}^{\theta}\subset L^2(\mathbb{R}^d;\mathbb{R}^d, \lambda)$.
\end{lemma}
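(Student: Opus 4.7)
The plan is to reformulate $\mathcal{K}_\theta$ as a composition of abstract linear operators and then recognize it as the canonical orthogonal projector formula $A(A^*A)^{\dagger}A^*$ onto $\mathrm{range}(A)$. Specifically, I would introduce the operator $A:\mathbb{R}^m \to L^2(\mathbb{R}^d;\mathbb{R}^d,\lambda)$ defined by $Ac = \partial_{\theta}T_{\theta}(\cdot)\, c = \sum_{k=1}^m c_k\, \partial_{\theta_k} T_{\theta}(\cdot)$. A short calculation shows that its Hilbert space adjoint is $A^* f = \int \partial_{\theta}T_{\theta}(z)^\top f(z)\,d\lambda(z) \in \mathbb{R}^m$. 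Unpacking the definitions then gives $A^*A = \widehat{G}(\theta)$ and $\mathcal{K}_\theta = A\widehat{G}^{\dagger}A^*$, while clearly $\mathrm{range}(A) = \mathcal{Q}^{\theta}$ by definition of $\mathcal{Q}^{\theta}$.

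With this formulation in hand, the lemma reduces to verifying three standard facts: self-adjointness, idempotency, and that the range equals $\mathcal{Q}^\theta$. Self-adjointness follows from symmetry of $\widehat{G}$, which yields $(\widehat{G}^{\dagger})^\top = \widehat{G}^{\dagger}$ and hence $\mathcal{K}_\theta^* = A\widehat{G}^{\dagger}A^* = \mathcal{K}_\theta$. For idempotency, I would invoke the Moore-Penrose identity $\widehat{G}^{\dagger}\widehat{G}\widehat{G}^{\dagger} = \widehat{G}^{\dagger}$ to compute
\[
\mathcal{K}_\theta^2 \;=\; A\widehat{G}^{\dagger}(A^*A)\widehat{G}^{\dagger}A^* \;=\; A\widehat{G}^{\dagger}\widehat{G}\widehat{G}^{\dagger}A^* \;=\; A\widehat{G}^{\dagger}A^* \;=\; \mathcal{K}_\theta.
\]

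The main obstacle, and the step deserving real care because $\widehat{G}$ may be singular, is showing $\mathrm{range}(\mathcal{K}_\theta) = \mathcal{Q}^{\theta}$. The inclusion $\subseteq$ is immediate. For the reverse, it suffices to verify $\mathcal{K}_\theta[\partial_{\theta_k}T_{\theta}] = \partial_{\theta_k}T_{\theta}$ for each $k$. Using $A^* \partial_{\theta_k} T_\theta = \widehat{G} e_k$, this computation reduces to showing $A\widehat{G}^{\dagger}\widehat{G} e_k = A e_k$. Since $\widehat{G}^{\dagger}\widehat{G}$ is the orthogonal projector in $\mathbb{R}^m$ onto $\ker(\widehat{G})^{\perp}$, the identity holds provided $A$ vanishes on $\ker(\widehat{G})$. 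I would establish $\ker(A) = \ker(\widehat{G})$ via the identity $\|Ac\|_{L^2}^2 = c^\top A^*A\, c = c^\top \widehat{G} c$: the right-hand side vanishes iff $c \in \ker(\widehat{G})$, showing both inclusions at once.

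Putting these together, $\mathcal{K}_\theta$ is a self-adjoint idempotent on $L^2(\mathbb{R}^d;\mathbb{R}^d,\lambda)$ whose range is exactly $\mathcal{Q}^{\theta}$, which by the standard characterization is precisely the orthogonal projection onto $\mathcal{Q}^{\theta}$.
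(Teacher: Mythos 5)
Your proof is correct, and it takes a genuinely different (more abstract) route than the paper's. You recast $\mathcal{K}_\theta$ as $A\widehat{G}^{\dagger}A^*$ with $A:\mathbb{R}^m\to L^2(\lambda)$, $Ac=\partial_\theta T_\theta(\cdot)c$, observe $A^*A=\widehat{G}$, and then invoke the standard characterization of orthogonal projections: self-adjoint plus idempotent plus range identification. The paper instead verifies the defining property directly: it checks that $\mathcal{K}_\theta$ annihilates $(\mathcal{Q}^\theta)^\perp$ (by showing the integral $\int \partial_\theta T_\theta^\top f\,d\lambda$ vanishes when $f\perp\mathcal{Q}^\theta$) and that $\mathcal{K}_\theta$ fixes each generator $\partial_{\theta_k}T_\theta$, the latter via the decomposition $e_k=\widehat{G}^\dagger\vec v+\eta$ with $\eta\perp\mathcal{R}(\widehat{G})$ and the observation that $\partial_\theta T_\theta(\cdot)\eta=0$. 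Note that this last observation is exactly your $\ker(A)=\ker(\widehat{G})$ step in different clothing, so the one non-trivial fact (that $\partial_\theta T_\theta$ annihilates the kernel of $\widehat{G}$) appears in both arguments; what differs is the surrounding scaffolding. Your version has the advantage of being transparently a special case of the well-known formula $P_{\mathrm{range}(A)}=A(A^*A)^{\dagger}A^*$ and cleanly separates the Moore--Penrose algebra from the analysis, while the paper's version is slightly more self-contained and does not require the reader to know (or re-derive) the self-adjoint-idempotent characterization.
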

\begin{proof}
    Assume $\langle f, \partial_{\theta_k} T_{\theta}\rangle_{L^2(\lambda)}=0, $ for $k=1, \cdots, m$, then $\int \partial_{\theta} T_\theta(z)^\top f(z)~d\lambda(z)$ is the zero vector, as a result $\mathcal{K}_{\theta}[f]=0$.
    
    On the other side, for $\partial _{\theta_k}T_{\theta}\in \mathcal{Q}^{\theta}$ with $k=1, \cdots m$, the vector $ \vec{v}=\int\partial_{\theta} T_\theta(z)^\top \partial_{\theta_k}T_{\theta}(z)~d\lambda(z)$ is the $k$-th column vector of the matrix $\widehat{G}$, so $\vec{v}=\widehat{G}e_k$, which implies that we can decompose $e_k=\widehat{G}^{\dagger}\vec{v}+\eta$ with $\eta\perp \mathcal{R}(\widehat{G})$. We first claim that $\partial_{\theta}T_{\theta}(\cdot)\eta$ is the zero function. In fact we have
    \begin{align*}
        0&=\widehat{G}\eta=\int \partial_{\theta} T_\theta(z)^\top \partial _{\theta}T_{\theta}(z)\eta~d\lambda(z),
    \end{align*}
    which shows that $\partial _{\theta}T_{\theta}(z)\eta\in \mathcal{Q}^{\theta}$ is orthogonal to the linear space $\mathcal{Q}^{\theta}$, hence equals to zero.
    
    We can check
    \begin{align*}
        \mathcal{K}_{\theta}[\partial _{\theta_k}T_{\theta}](\cdot)&=\partial_{\theta} T_\theta(\cdot)\widehat{G}^{\dagger}(\theta) \int \partial_{\theta} T_\theta(z)^\top \partial _{\theta_k}T_{\theta}(z)~d\lambda(z)\\
        &=\partial_{\theta} T_\theta(\cdot)\widehat{G}^{\dagger} \vec{v}\\
        &=\partial_{\theta} T_\theta(\cdot)(e_k-\eta)=\partial _{\theta_k}T_{\theta}(\cdot)
    \end{align*}
    So $\mathcal{K}_\theta$ is the orthogonal projection from $L^2(\mathbb{R}^d;\mathbb{R}^d, \lambda)$ onto $\mathcal{Q}^{\theta}$.
  \end{proof}  
    
The following proposition is a direct result of Lemma \ref{lemma: K theta proj}:
\begin{proposition}
\label{prop: map in range}
    For any $f\in L^2(\mathbb{R}^d;\mathbb{R}^d, \lambda)$, we have
    \begin{align}
        \int \partial_{\theta} T_\theta(z)^\top f(z)~d\lambda(z)\in \mathcal{R}(\widehat{G}).
    \end{align}
\end{proposition}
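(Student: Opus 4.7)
The plan is to use Lemma \ref{lemma: K theta proj} directly, essentially as a bookkeeping step. Denote $v := \int \partial_{\theta} T_\theta(z)^\top f(z)\, d\lambda(z) \in \mathbb{R}^m$; the goal is to show $v \in \mathcal{R}(\widehat{G})$.

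First, I would split $f = \mathcal{K}_\theta[f] + (f - \mathcal{K}_\theta[f])$. By Lemma \ref{lemma: K theta proj}, $\mathcal{K}_\theta$ is the orthogonal projection of $L^2(\mathbb{R}^d;\mathbb{R}^d,\lambda)$ onto $\mathcal{Q}^\theta = \mathrm{span}\{\partial_{\theta_k}T_\theta : k=1,\dots,m\}$, so the residual $f - \mathcal{K}_\theta[f]$ is $L^2(\lambda)$-orthogonal to every $\partial_{\theta_k}T_\theta$. Assembled into one vector identity, this orthogonality reads $\int \partial_\theta T_\theta(z)^\top (f - \mathcal{K}_\theta[f])(z)\, d\lambda(z) = 0$, and therefore $v = \int \partial_\theta T_\theta(z)^\top \mathcal{K}_\theta[f](z)\, d\lambda(z)$.

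Next, I would substitute the explicit formula $\mathcal{K}_\theta[f](z) = \partial_\theta T_\theta(z)\,\widehat{G}(\theta)^{\dagger} v$ obtained from the first line of the definition of $\mathcal{K}_\theta$ (using that the inner integral there equals $v$ by construction), and then recognize that $\int \partial_\theta T_\theta(z)^\top \partial_\theta T_\theta(z)\, d\lambda(z) = \widehat{G}(\theta)$. This gives $v = \widehat{G}(\theta)\bigl(\widehat{G}(\theta)^{\dagger} v\bigr)$, which exhibits $v$ as $\widehat{G}(\theta)$ applied to a concrete vector in $\mathbb{R}^m$; hence $v \in \mathcal{R}(\widehat{G})$, as desired.

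There is no serious obstacle: the statement is an almost immediate corollary of the projection identity already established in Lemma \ref{lemma: K theta proj}. As a sanity check, an equally clean alternative route bypasses the kernel construction altogether: since $\widehat{G}$ is symmetric positive semi-definite, one has $\mathcal{R}(\widehat{G})^{\perp} = \ker(\widehat{G})$, and for any $\eta \in \ker(\widehat{G})$ the identity $0 = \eta^\top \widehat{G}\,\eta = \int |\partial_\theta T_\theta(z)\eta|^2\, d\lambda(z)$ forces $\partial_\theta T_\theta(z)\eta = 0$ for $\lambda$-a.e.\ $z$, so that $\eta^\top v = \int (\partial_\theta T_\theta(z)\eta)^\top f(z)\, d\lambda(z) = 0$, i.e.\ $v \perp \ker(\widehat{G})$. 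Either path produces the claim in a couple of lines.
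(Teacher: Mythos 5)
Both of your routes are correct. Your main argument is essentially the same as the paper's, just phrased through an explicit orthogonality decomposition rather than through the normal equations. The paper writes $\mathcal{K}_\theta[f] = \partial_\theta T_\theta \gamma^*$ with $\gamma^*$ the least-squares minimizer, and then invokes the normal equation $\widehat{G}\gamma^* = v$ to finish. You instead substitute the closed-form kernel $\mathcal{K}_\theta[f](z) = \partial_\theta T_\theta(z)\,\widehat{G}^\dagger v$ directly (equivalently, you are using $\gamma^* = \widehat{G}^\dagger v$) and arrive at $v = \widehat{G}\widehat{G}^\dagger v$. Since $\widehat{G}\widehat{G}^\dagger$ is precisely the orthogonal projector onto $\mathcal{R}(\widehat{G})$ for a symmetric matrix, this is a valid way to exhibit $v \in \mathcal{R}(\widehat{G})$ and is not circular; it is the same computation the paper does, read in a slightly different order.

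Your alternative route, however, is genuinely different and worth noting: it bypasses Lemma~\ref{lemma: K theta proj} entirely. Writing $\mathcal{R}(\widehat{G})^\perp = \ker(\widehat{G})$ (valid because $\widehat{G}$ is symmetric), and observing that $\eta \in \ker(\widehat{G})$ forces $\int |\partial_\theta T_\theta(z)\eta|^2\,d\lambda(z) = 0$, hence $\partial_\theta T_\theta(z)\eta = 0$ for $\lambda$-a.e.\ $z$, gives $\eta^\top v = \int (\partial_\theta T_\theta(z)\eta)^\top f(z)\,d\lambda(z) = 0$ immediately. This is more elementary and self-contained than the paper's proof: it requires only the definition of $\widehat{G}$ as a Gram matrix, not the apparatus of the projection kernel $\mathcal{K}_\theta$. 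The trade-off is that the paper's (and your first) route additionally identifies \emph{which} preimage realizes $v$ under $\widehat{G}$, namely $\gamma^* = \widehat{G}^\dagger v$, which is reused implicitly elsewhere in the paper's arguments, whereas your linear-algebra route only asserts membership.
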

\begin{proof}
By the property of projection operator $\mathcal{K}_{\theta}$, we know that $\mathcal{K}_{\theta}[f]=\partial_{\theta}T_{\theta}\gamma^*$ with
\begin{align}
\label{eq: opt gamma}
\gamma^*=\textrm{argmin}_{\gamma} \int \left|\partial_{\theta}T_{\theta}(z)\gamma-f(z)\right|^2~d\lambda(z).
\end{align}
Through the normal equation of \eqref{eq: opt gamma}, we have
\begin{align*}
    \int \partial_{\theta} T_\theta(z)^\top f(z)~d\lambda(z)=\left(\int \partial_\theta T_\theta(z)^\top \partial_\theta T_\theta(z)~d\lambda(z)\right)\gamma ^*=\widehat{G}(\theta)\gamma^*\in \mathcal{R}(\widehat{G}).
\end{align*}
\end{proof}

With Proposition \ref{prop: map in range} and the integral form \eqref{eq: para grad f} of $\nabla_{\theta}F(\theta)$, we conclude:
\begin{proposition}
\label{prop: grad f in range}
        For energy $\mathcal{F}$ with smooth $L^2$ first variation, we always have $\nabla_{\theta}F(\theta)\in \mathcal{R}(\widehat{G}(\theta))$.
\end{proposition}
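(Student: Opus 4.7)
The plan is to recognize that this proposition is a direct corollary of Proposition \ref{prop: map in range} combined with the integral representation of $\nabla_\theta F(\theta)$ already derived in equation \eqref{eq: para grad f}. Specifically, I would set
\[ f(z) := \nabla \tfrac{\delta}{\delta\rho}\mathcal{F}(T_{\theta\sharp}\lambda(\cdot),\cdot) \circ T_\theta(z), \]
so that \eqref{eq: para grad f} reads $\nabla_\theta F(\theta) = \int \partial_\theta T_\theta(z)^\top f(z)\, d\lambda(z)$, which is exactly the form covered by Proposition \ref{prop: map in range}.

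The first step is to verify that $f \in L^2(\mathbb{R}^d;\mathbb{R}^d,\lambda)$. Since $\frac{\delta \mathcal{F}}{\delta \rho}(\rho_\theta,\cdot)$ is smooth by hypothesis, its gradient $\nabla\frac{\delta \mathcal{F}}{\delta \rho}(\rho_\theta,\cdot)$ is a smooth vector field on $\mathbb{R}^d$; composing with the (smooth) push-forward map $T_\theta$ gives a smooth function of $z$, and the $L^2(\lambda)$ condition amounts to the integrability
\[ \int \bigl|\nabla \tfrac{\delta \mathcal{F}}{\delta\rho}(\rho_\theta, T_\theta(z))\bigr|^2 \, d\lambda(z) = \int \bigl|\nabla \tfrac{\delta \mathcal{F}}{\delta\rho}(\rho_\theta, x)\bigr|^2 \rho_\theta(x)\, dx < \infty, \]
which is a natural regularity requirement that is implicitly encoded in the phrase ``smooth $L^2$ first variation'' and is easily checked for the standard potentials listed in \eqref{general potential energy} (linear $V$, nonlinear $U$, and interaction $W$, assuming the usual growth conditions).

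The second step is a one-line invocation: since $f \in L^2(\mathbb{R}^d;\mathbb{R}^d,\lambda)$, Proposition \ref{prop: map in range} immediately yields $\int \partial_\theta T_\theta(z)^\top f(z)\, d\lambda(z) \in \mathcal{R}(\widehat{G}(\theta))$, which is the desired conclusion.

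I do not anticipate any genuine obstacle; the only subtlety is the $L^2$ integrability check in the first step, which is essentially a modeling assumption rather than a nontrivial argument. The substantive geometric content — that the range of $\widehat{G}(\theta)$ captures exactly the image of $L^2(\lambda)$ functions under the map $f\mapsto \int \partial_\theta T_\theta^\top f\, d\lambda$ — has already been established in Proposition \ref{prop: map in range} via the orthogonal projection $\mathcal{K}_\theta$ from Lemma \ref{lemma: K theta proj}.
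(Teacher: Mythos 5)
Your proposal matches the paper's argument exactly: the paper also deduces Proposition \ref{prop: grad f in range} as an immediate corollary of the integral representation \eqref{eq: para grad f} together with Proposition \ref{prop: map in range}, and leaves the $L^2(\lambda)$ integrability of $\nabla\frac{\delta}{\delta\rho}\mathcal{F}\circ T_\theta$ as an implicit regularity assumption subsumed under ``smooth $L^2$ first variation.'' Your explicit note on that integrability check is a reasonable supplementary remark but does not change the substance of the proof.
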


Now we go back to the parameterized system \eqref{general PWGF pseudo}. Under Assumption \ref{assumtion: pseudo inverse sv}, $\{\theta\}$ is a $C^2$ curve on $\Theta$, hence we can define the following function $\Gamma^{\theta}(\cdot,t)\in L^2(\lambda)$ associated to $\theta$,
\begin{equation}
\label{def: Gamma func}
  \Gamma^{\theta}(z,t) = \sum_{k=1}^m\dot\theta_k\partial_{\theta_k}\partial_{\theta} T_{\theta}(z)\dot\theta+\partial_{\theta} T_{\theta}(z)\ddot\theta.
\end{equation}
The following lemma provides an explicit expression on the time derivative of $\widehat{G}(\theta)\dot\theta-p$.
\begin{lemma}
\label{res derivative}
    If $(\theta, p)$ solves the parameterized system \eqref{general PWGF pseudo},
    then there is 
    \begin{align}
        \frac{d}{dt}[\widehat{G}(\theta)\dot\theta-p]
        =\int \partial_{\theta} T_{\theta}(z)^\top  \left[\Gamma^{\theta}(z, t)+\nabla \frac{\delta}{\delta\rho}\mathcal{F}( T_{\theta\sharp}\lambda(\cdot), \cdot)\circ T_{\theta}(z)\right]~d\lambda(z)+S(\theta, p)
        \label{eq:dr}
    \end{align}
\end{lemma}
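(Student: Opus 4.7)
\textbf{Proof proposal for Lemma \ref{res derivative}.}

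The plan is a direct computation that matches terms from the product-rule expansion of $\frac{d}{dt}[\widehat{G}(\theta)\dot\theta]$ against the expression for $\dot p$ given in \eqref{eq:gPWHF-p}, with \eqref{eq: para grad f} used to rewrite $\nabla_\theta F(\theta)$ as a $\lambda$-integral and with symmetry of second derivatives of $T_\theta$ used to collapse the remaining quadratic terms.

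First I would differentiate $\widehat{G}(\theta)\dot\theta$ in time via the chain and product rules, obtaining
\begin{equation*}
\frac{d}{dt}[\widehat{G}(\theta)\dot\theta] \;=\; \sum_{l=1}^m \dot\theta_l\,\partial_{\theta_l}\widehat{G}(\theta)\,\dot\theta \;+\; \widehat{G}(\theta)\ddot\theta,
\end{equation*}
and then insert the integral formula \eqref{relaxed metric tensor} to write $\partial_{\theta_l}\widehat{G}(\theta)=\int[(\partial_{\theta_l}\partial_\theta T_\theta)^\top \partial_\theta T_\theta+\partial_\theta T_\theta^\top(\partial_{\theta_l}\partial_\theta T_\theta)]\,d\lambda(z)$. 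This splits the first summand into $I_1+I_2$, where $I_1$ has $(\partial_{\theta_l}\partial_\theta T_\theta)^\top \partial_\theta T_\theta\dot\theta$ on the inside and $I_2$ has $\partial_\theta T_\theta^\top(\partial_{\theta_l}\partial_\theta T_\theta)\dot\theta$. Combining $I_2$ with $\widehat{G}(\theta)\ddot\theta=\int\partial_\theta T_\theta^\top\,\partial_\theta T_\theta\,\ddot\theta\,d\lambda$ and using the definition \eqref{def: Gamma func} of $\Gamma^\theta$ yields
\begin{equation*}
I_2 + \widehat{G}(\theta)\ddot\theta \;=\; \int \partial_\theta T_\theta(z)^\top\,\Gamma^\theta(z,t)\,d\lambda(z),
\end{equation*}
which produces the first term on the right-hand side of \eqref{eq:dr}.

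Next I would identify $I_1$ with the quadratic term in \eqref{eq:gPWHF-p}. A componentwise expansion gives the $k$-th coordinate of $I_1$ as $\sum_{i,j}\dot\theta_i\dot\theta_j\int\sum_n\partial_{\theta_i\theta_k}T_\theta^{(n)}\,\partial_{\theta_j}T_\theta^{(n)}\,d\lambda$. Computing $\frac{1}{2}\dot\theta^\top\partial_{\theta_k}\widehat{G}(\theta)\dot\theta$ analogously and exploiting the symmetry $\partial_{\theta_i\theta_k}=\partial_{\theta_k\theta_i}$ shows these two expressions coincide, so
\begin{equation*}
I_1 \;=\; \tfrac12\bigl[\dot\theta^\top\partial_{\theta_k}\widehat{G}(\theta)\dot\theta\bigr]_{k=1}^m.
\end{equation*}
Because $\dot\theta=\widehat{G}^\dagger p$ by \eqref{eq:gPWHF-theta}, this is exactly the first bracket appearing in \eqref{eq:gPWHF-p}.

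Finally I would assemble the pieces. Subtracting \eqref{eq:gPWHF-p} from $\frac{d}{dt}[\widehat{G}(\theta)\dot\theta]=I_1+I_2+\widehat{G}(\theta)\ddot\theta$, the quadratic brackets cancel and one is left with
\begin{equation*}
\frac{d}{dt}[\widehat{G}(\theta)\dot\theta-p] \;=\; \int \partial_\theta T_\theta^\top\,\Gamma^\theta\,d\lambda + \nabla_\theta F(\theta) + S(\theta,p).
\end{equation*}
Applying \eqref{eq: para grad f} to replace $\nabla_\theta F(\theta)$ with $\int\partial_\theta T_\theta(z)^\top\,\nabla\frac{\delta\mathcal{F}}{\delta\rho}(T_{\theta\sharp}\lambda,\cdot)\circ T_\theta(z)\,d\lambda(z)$ and combining it under the same integral sign as $\Gamma^\theta$ produces \eqref{eq:dr}. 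The only delicate step is the symmetry-based identification of $I_1$ with $\tfrac12[\dot\theta^\top\partial_{\theta_k}\widehat{G}\dot\theta]_k$; once the indices are expanded carefully this is immediate, so the main obstacle is purely bookkeeping of indices and transposes rather than any analytic difficulty.
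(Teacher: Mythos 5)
Your proposal is correct and follows essentially the same route as the paper's proof: differentiate $\widehat{G}(\theta)\dot\theta$ by the product rule, split $\sum_l\dot\theta_l\partial_{\theta_l}\widehat{G}\dot\theta$ into the two integrals (your $I_1$, $I_2$), use symmetry of mixed partials to identify $I_1$ with $\tfrac12[\dot\theta^\top\partial_{\theta_k}\widehat{G}\dot\theta]_k$, absorb $I_2+\widehat{G}\ddot\theta$ into $\int\partial_\theta T_\theta^\top\Gamma^\theta\,d\lambda$, cancel against $\dot p$ from \eqref{eq:gPWHF-p}, and then apply \eqref{eq: para grad f} to rewrite $\nabla_\theta F$ under the same integral. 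The index bookkeeping is exactly the "where" computation the paper spells out, so this is the same argument.
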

\begin{proof}
We first have
\begin{align*}
    \widehat{G}(\theta)\ddot\theta=\int \partial_{\theta} T_{\theta}(z)^\top  \partial_{\theta} T_{\theta}(z)\ddot\theta~d\lambda(z).
\end{align*}
In addition, there is
\begin{align*}
    \frac{d}{dt}(\widehat{G}(\theta)\dot\theta) & = \int \sum_{k=1}^m\dot\theta_k\partial_{\theta_k}\partial_{\theta} T_{\theta}(z)^\top\partial T_{\theta}(z)\dot\theta d\lambda(z) + \int \partial_{\theta} T_{\theta}(z)^\top  \left(\sum_{k=1}^m\dot\theta_k\partial_{\theta_k}\partial_{\theta} T_{\theta}(z)\dot\theta\right)~d\lambda(z)+\widehat{G}(\theta)\ddot\theta\\
    &=\frac{1}{2} [\dot{\theta}^{\top} (\partial_{\theta_k}\widehat{G})\dot{\theta}]_{k=1}^{m}+\int \partial_{\theta} T_{\theta}(z)^\top  \left(\sum_{k=1}^m\dot\theta_k\partial_{\theta_k}\partial_{\theta} T_{\theta}(z)\dot\theta+\partial_{\theta} T_{\theta}(z)\ddot\theta\right)~d\lambda(z)\\
    &=\frac{1}{2} [\dot{\theta}^{\top} (\partial_{\theta_k}\widehat{G})\dot{\theta}]_{k=1}^{m}+\int \partial_{\theta} T_{\theta}(z)^\top  \Gamma^{\theta}(z, t)~d\lambda(z),
\end{align*}
where
\begin{align*}
    \frac{1}{2} [\dot{\theta}^{\top} (\partial_{\theta_k}\widehat{G})\dot{\theta}]_{k=1}^{m}&=\frac{1}{2}\left[
    \int \dot\theta\partial_{\theta_k}\partial_{\theta} T_{\theta}(z)^\top\partial T_{\theta}(z)\dot\theta d\lambda(z)+\int \dot\theta\partial T_{\theta}(z)^\top\partial_{\theta_k}\partial_{\theta} T_{\theta}(z)\dot\theta d\lambda(z)
    \right]_{k=1}^m\\
    &=\Big[
    \int \dot\theta\partial_{\theta_k}\partial_{\theta} T_{\theta}(z)^\top\partial T_{\theta}(z)\dot\theta d\lambda(z)\Big]_{k=1}^m\\
    &=\Big[
    \int \partial_{\theta_k}\Big(\sum_{j=1}^m\dot\theta_j\partial_{\theta_j} T_{\theta}(z)\Big)^\top\partial T_{\theta}(z)\dot\theta d\lambda(z)\Big]_{k=1}^m\\
    &=\int \partial_{\theta}\Big(\sum_{j=1}^m\dot\theta_j\partial_{\theta_j} T_{\theta}(z)\Big)^\top\partial T_{\theta}(z)\dot\theta d\lambda(z)\\
    &=\int \sum_{j=1}^m\dot\theta_j\partial_{\theta_j} \partial_{\theta}T_{\theta}(z)^\top\partial T_{\theta}(z)\dot\theta d\lambda(z)
\end{align*}
Combining the results above and \eqref{eq:gPWHF-p}, we get
\begin{align*}
\frac{d}{dt}[\widehat{G}(\theta)\dot\theta-p]&=\frac{1}{2} [\dot{\theta}^{\top} (\partial_{\theta_k}\widehat{G})\dot{\theta}]_{k=1}^{m}+\int \partial_{\theta} T_{\theta}(z)^\top  \Gamma^{\theta}(z, t)~d\lambda(z)- \left(\frac{1}{2}[\dot{\theta}^{\top} (\partial_{\theta_k}\widehat{G})\dot{\theta}]_{k=1}^{m}-\nabla_{\theta}F(\theta)-S(\theta, p)\right)\\
&=\int \partial_{\theta} T_{\theta}(z)^\top  \left[\Gamma^{\theta}(z, t)+\nabla \frac{\delta}{\delta\rho}\mathcal{F}( T_{\theta\sharp}\lambda(\cdot), \cdot)\circ T_{\theta}(z)\right]~d\lambda(z)+S(\theta, p)
\end{align*}
which yields \eqref{eq:dr}.
\end{proof}

We are ready to establish the estimation on the magnitude of $\widehat{G}(\theta(t))\dot\theta(t)-p(t)$.
\begin{theorem}
\label{theorem: residual estimation}
    Let $(\theta, p )$ be the solution of \eqref{general PWGF pseudo} and denote $r(t):=\widehat{G}(\theta(t))\dot{\theta}(t)-p(t)$ and  $R(t):=|r(t)|^2$, then there is
    \begin{align}
    \label{eq:R-bound}
        R(t)\leq R(0)e^{C_St},
    \end{align}
    where $C_S:=2\sqrt{2m(H_0-F_{\min})}C_{\Theta}\lambda_{\min,\Theta}^{-3/2}$.
\end{theorem}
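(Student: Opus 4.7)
The plan is to reduce the claim to a Gronwall differential inequality $\dot R(t)\le C_S R(t)$, from which \eqref{eq:R-bound} follows immediately. The core observation is a structural simplification of $r(t)$ itself that lets us discard most of the right-hand side of Lemma \ref{res derivative}.

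First I would use \eqref{eq:gPWHF-theta}, namely $\dot\theta=\widehat G^{\dagger}p$, to rewrite
\[
r(t)=\widehat G\dot\theta-p=(\widehat G\widehat G^{\dagger}-I)p=-Qp,
\]
where $Q:=I-\widehat G\widehat G^{\dagger}$ is the orthogonal projection onto $\mathcal{R}(\widehat G)^{\perp}$ (this uses the symmetry of $\widehat G$, which also gives $Q=I-\widehat G^{\dagger}\widehat G$). In particular $r(t)\in\mathcal{R}(\widehat G)^{\perp}$ for every $t$. Differentiating $R=|r|^2$ and invoking Lemma \ref{res derivative} yields
\[
\dot R=2r^{\top}\dot r=2r^{\top}\!\!\int\partial_{\theta}T_{\theta}(z)^{\top}\Bigl[\Gamma^{\theta}(z,t)+\nabla\tfrac{\delta\mathcal F}{\delta\rho}(\rho_\theta,\cdot)\circ T_{\theta}(z)\Bigr]d\lambda(z)+2r^{\top}S(\theta,p).
\]
By Proposition \ref{prop: map in range} the integral above lies in $\mathcal{R}(\widehat G)$, hence is orthogonal to $r$; the first term drops out, leaving $\dot R=2r^{\top}S(\theta,p)$.

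Next I would rewrite $S(\theta,p)$ in a form manifestly proportional to $r$. Substituting $Qp=-r$ (and its transpose $p^{\top}Q=-r^{\top}$) into \eqref{eq:S-def}, and exploiting the symmetry of $\widehat G$, $\widehat G^{\dagger}$, and $\partial_{\theta_k}\widehat G$, the two summands of $S(\theta,p)_k$ become transposes of the same scalar and collapse to
\[
S(\theta,p)_k=-r^{\top}(\partial_{\theta_k}\widehat G)(\widehat G^{\dagger})^{2}p=-r^{\top}(\partial_{\theta_k}\widehat G)\widehat G^{\dagger}\dot\theta,
\]
since $(\widehat G^{\dagger})^{2}p=\widehat G^{\dagger}\dot\theta$. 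Applying $\|\partial_{\theta_k}\widehat G\|\le C_{\Theta}$ from Assumption \ref{assumtion: pseudo inverse sv} together with \eqref{norm pseudo inverse} yields $|S(\theta,p)_k|\le C_{\Theta}\lambda_{\min,\Theta}^{-1}|r|\,|\dot\theta|$. Taking Euclidean norms in $k$ and using the uniform bound $|\dot\theta|\le\sqrt{2(H_0-F_{\min})/\lambda_{\min,\Theta}}$ from Lemma \ref{lemma: bounded dot theta} gives $|S(\theta,p)|\le\tfrac{C_S}{2}|r|$ with $C_S$ exactly as stated. Hence $\dot R\le 2|r|\,|S|\le C_S R$, and Gronwall's inequality delivers \eqref{eq:R-bound}.

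The main obstacle I anticipate is the algebraic collapse in the third step: recognizing that after the substitution $Qp=-r$ and appeals to symmetry, both summands of $S(\theta,p)_k$ reduce to the single clean bilinear form $-r^{\top}(\partial_{\theta_k}\widehat G)\widehat G^{\dagger}\dot\theta$. Once this identity is exposed, everything else is routine: the pseudo-inverse estimate \eqref{norm pseudo inverse}, the matrix-norm bound in Assumption \ref{assumtion: pseudo inverse sv}, and Hamiltonian energy conservation via Lemma \ref{lemma: bounded dot theta}.
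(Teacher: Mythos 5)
Your argument is correct and follows essentially the same route as the paper: differentiate $R$, invoke Lemma \ref{res derivative}, kill the integral term by the orthogonality between $r\in\mathcal{R}(\widehat G)^{\perp}$ and quantities in $\mathcal{R}(\widehat G)$ (Proposition \ref{prop: map in range}), and then bound $|S(\theta,p)|\le\tfrac{1}{2}C_S|r|$ before applying Gronwall. The only stylistic difference is that you algebraically collapse the two summands of $S(\theta,p)_k$ into the single expression $-r^{\top}(\partial_{\theta_k}\widehat G)\widehat G^{\dagger}\dot\theta$ using the symmetry of $\widehat G$, $\widehat G^{\dagger}$, and $\partial_{\theta_k}\widehat G$ together with $Qp=-r$, whereas the paper bounds each summand separately by $\tfrac{1}{2}C_S|r|$ and then averages; both yield the identical constant.
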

\begin{proof}
Taking the time derivative of $R(t)$, we obtain
\begin{equation}
\label{res norm decomp}
\begin{aligned}
        \frac{d}{dt}R(t)&=2\langle r(t), \frac{d}{dt}r(t)\rangle\\
    &=2\Big\langle \widehat{G}\dot{\theta}-p, \int \partial_{\theta} T_{\theta}(z)^\top  \left[\Gamma^{\theta}(z, t)+\nabla \frac{\delta}{\delta\rho}\mathcal{F}( T_{\theta\sharp}\lambda(\cdot), \cdot)\circ T_{\theta}(z)\right]~d\lambda(z)+S(\theta, p)\Big\rangle\ \\
    &=2\Big\langle \widehat{G}\dot{\theta}-p, \int \partial_{\theta} T_{\theta}(z)^\top  \left[\Gamma^{\theta}(z, t)+\nabla \frac{\delta}{\delta\rho}\mathcal{F}( T_{\theta\sharp}\lambda(\cdot), \cdot)\circ T_{\theta}(z)\right]~d\lambda(z)\Big\rangle+2\langle \widehat{G}\dot{\theta}-p, S(\theta, p)\rangle.
\end{aligned}
\end{equation}
By Proposition \ref{prop: map in range}, we know that $\int \partial_{\theta} T_{\theta}(z)^\top  [\Gamma^{\theta}(z, t)+ \nabla \frac{\delta}{\delta\rho}\mathcal{F}( T_{\theta\sharp}\lambda(\cdot), \cdot)\circ T_{\theta}(z)]~d\lambda(z) \in \mathcal{R}(\widehat{G}(\theta))$. Due to the property of pseudo inverse operator, $\widehat{G}(\theta(t))\dot{\theta}(t)-p(t)$ is orthogonal to the subspace $\mathcal{R}(\widehat{G}(\theta))$, hence the first term in \eqref{res norm decomp} vanishes. As a result, we can simply write:
\begin{align}
\label{res norm derivative}
    \frac{d}{dt}R(t)=\langle \widehat{G}\dot{\theta}-p, S(\theta, p)\rangle .
\end{align}
By Lemma \ref{lemma: bounded dot theta}, we know that $\dot{\theta}$ is uniformly bounded, so we can show that
\begin{align*}
    \left|[p^{\top}\widehat{G}^{\dagger}\widehat{G}^{\dagger}(\partial_{\theta_k}\widehat{G})(I-\widehat{G}\widehat{G}^{\dagger})p]_{k=1}^m\right|&=\left|[\dot{\theta}\widehat{G}^{\dagger}(\partial_{\theta_k}\widehat{G})(p-\widehat{G}\dot{\theta})]_{k=1}^m\right|\\
    &\leq \frac{\sqrt{m}|\dot{\theta}|}{\lambda_{ \min}(\widehat{G})}\max_{k}\|\partial_{\theta_k}\widehat{G}\|\cdot |\widehat{G}\dot{\theta}-p|\\
    &\leq \frac{1}{2}C_S\cdot|\widehat{G}\dot{\theta}-p|.
\end{align*}
Similarly, we can also bound $\left|[p^{\top}(I-\widehat{G}^{\dagger}\widehat{G})(\partial_{\theta_k}\widehat{G})\widehat{G}^{\dagger}\widehat{G}^{\dagger}p]_{k=1}^{m}\right|$ by $\frac{1}{2}C_S\cdot|\widehat{G}\dot{\theta}-p|$. These two bounds and the definition of $S(\theta,p)$ in \eqref{eq:S-def} yield
\begin{align}
\label{ineq: S bound}
    |S(\theta, p)|\leq \frac{1}{2}C_S\cdot|\widehat{G}\dot{\theta}-p|.
\end{align}
Combining \eqref{res norm derivative} and \eqref{ineq: S bound}, we get:
\begin{align*}
    \frac{d}{dt}R(t)&\leq 2|\widehat{G}\dot{\theta}-p|\cdot |S(\theta, p)|\leq C_S\cdot|\widehat{G}\dot{\theta}-p|^2 =C_S R(t).
\end{align*}
Applying Gronwall's inequality yields \eqref{eq:R-bound}.
\end{proof}

An immediate consequence of Theorem \ref{theorem: residual estimation} is that $\widehat{G}(\theta(t))\dot\theta(t)-p(t)=0$ for all $t$ as long as $p(0)=\widehat{G}(\theta(0))\dot{\theta}(0)$ which implies $R(0)=0$. This result is summarized in the following proposition, whose proof is omitted.
\begin{proposition}
    Under the same assumption as in Theorem \ref{theorem: residual estimation}, and set $p(0)=\widehat{G}(\theta(0))\dot{\theta}(0)$ for any $\dot{\theta}(0)\in \mathbb{R}^m$. Then system \eqref{general PWGF pseudo} is equivalent to:
        \begin{subequations}
\label{general PWGF pseudo simplified}
    \begin{align}
        \dot\theta &= \widehat{G}^{\dagger}p,\\
        \dot p &= \frac{1}{2}[(\widehat{G}^{\dagger}p)^{\top} (\partial_{\theta_k}\widehat{G})\widehat{G}^{\dagger}p]_{k=1}^{m}-\nabla_{\theta}F(\theta).
    \end{align}
\end{subequations}
\end{proposition}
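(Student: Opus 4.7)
The plan is to use Theorem \ref{theorem: residual estimation} directly. With the prescribed initial condition $p(0)=\widehat{G}(\theta(0))\dot\theta(0)$, the residual $r(t):=\widehat{G}(\theta(t))\dot\theta(t)-p(t)$ satisfies $R(0)=|r(0)|^2=0$. Plugging this into the estimate $R(t)\le R(0)e^{C_S t}$ from Theorem \ref{theorem: residual estimation} yields $R(t)\equiv 0$, and hence $\widehat{G}(\theta(t))\dot\theta(t)=p(t)$ for every $t\ge 0$ in the existence interval of the solution.

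Next I would eliminate the extra term $S(\theta,p)$ in \eqref{eq:gPWHF-p}. For this I would reuse the bound \eqref{ineq: S bound} established during the proof of Theorem \ref{theorem: residual estimation}, namely
\begin{equation*}
  |S(\theta(t),p(t))|\le \tfrac{1}{2}C_S\,|\widehat{G}(\theta(t))\dot\theta(t)-p(t)|.
\end{equation*}
Since the right-hand side equals $\tfrac{1}{2}C_S\sqrt{R(t)}=0$, we conclude $S(\theta(t),p(t))=0$ along the trajectory. Inserting this identity into \eqref{eq:gPWHF-p} immediately converts \eqref{general PWGF pseudo} into \eqref{general PWGF pseudo simplified}, while the $\dot\theta$-equation is unchanged.

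Conversely, any solution of \eqref{general PWGF pseudo simplified} launched from $p(0)=\widehat{G}(\theta(0))\dot\theta(0)$ also has $r(0)=0$, and the same argument applied to the simplified system (where $S$ is absent but the derivation of $R(t)\le R(0)e^{C_S t}$ still goes through, since only the orthogonality of the forcing to $\mathcal{R}(\widehat{G})$ and the bound on $S$ were used) keeps $r(t)\equiv 0$. Hence the trajectory solves \eqref{general PWGF pseudo} as well, establishing equivalence in both directions.

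I do not expect a serious obstacle: the content of the proposition is essentially a corollary of Theorem \ref{theorem: residual estimation} and inequality \eqref{ineq: S bound}. The only point that deserves care is the direction of the equivalence, which forces one to verify that the forcing term on the right-hand side of the residual equation \eqref{eq:dr} still lies in $\mathcal{R}(\widehat{G}(\theta))$ when the $S$-term is removed; this is exactly what Proposition \ref{prop: map in range} and Proposition \ref{prop: grad f in range} guarantee, so no additional work is needed.
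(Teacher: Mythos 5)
Your argument is correct and is exactly what the paper intends (the paper explicitly labels the statement ``an immediate consequence of Theorem \ref{theorem: residual estimation}'' and omits the proof). You correctly note that $R(0)=0$ forces $R(t)\equiv 0$ via Theorem \ref{theorem: residual estimation}, that $S(\theta(t),p(t))\equiv 0$ then follows from \eqref{ineq: S bound}, and you go slightly beyond the paper's remark by also checking the converse direction, verifying that the orthogonality argument (which hinges on Propositions \ref{prop: map in range} and \ref{prop: grad f in range}) continues to hold when the residual equation is derived from \eqref{general PWGF pseudo simplified} rather than \eqref{general PWGF pseudo}.
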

We denote the ODE system \eqref{general PWGF pseudo simplified} as \textit{Parameterized Wasserstein Hamiltonian Flow with simplified metric}, or for short as PWHF.

\subsubsection{Error Analysis on the PWHF}
In Section \ref{Background WHF}, we discussed the particle WHF. Here we consider its counterpart for the PWHF, i.e., the particle level dynamics induced by our parameterized dynamics in parameter space:
\begin{equation}
  \ddot{\boldsymbol{Y}}^\Theta = \frac{d}{dt}\left(\partial_{\theta} T_{\theta}(z_0)\dot\theta\right) =  \sum_{k=1}^m\dot\theta_k\partial_{\theta_k}\partial_{\theta} T_{\theta}(z)\dot\theta+\partial_{\theta} T_{\theta}(z)\ddot\theta= \Gamma^{\theta}(T_{\theta}^{-1}(\boldsymbol{Y}^\Theta), t), \label{particle ode 2}
\end{equation}
where $\theta$ satisfies the ODE \eqref{general PWGF pseudo simplified}. This Lagrangian perspective enables us to carry out the error analysis results of PWHF. To be more specific, for a fixed initial position $z_0$, we first estimate the difference between the vector fields that drive $\boldsymbol{X}$ and $\boldsymbol{Y}^\Theta$. The result leads to an estimation on the $l^2$ distance between $\boldsymbol{X}$ and $\boldsymbol{Y}^\Theta$ for $t>0$. To achieve this goal, we introduce the particle level dynamics of the parameterized system, and provide several useful lemmas.
Here we assume the function $\frac{\delta}{\delta\rho} \mathcal{F}$ to be Lipschitz continuous in the $L^2(\mathbb{R}^d;\mathbb{R}^d, \lambda)$ sense as follows.
\begin{assumption}
\label{assumption: lip constant of delfta F}
There exists a constant $C_{\mathcal{F}}$ such that for any two push-forward maps $T$ and $ \tilde{T}$ there is
\begin{align}
        \int \Big|\nabla \frac{\delta}{\delta\rho}\mathcal{F}( T_{\sharp}\lambda(\cdot), \cdot)\circ T(z)-\nabla \frac{\delta}{\delta\rho}\mathcal{F}( \Tilde{T}_{\sharp}\lambda(\cdot), \cdot)\circ \tilde{T}(z) \Big|^2~d\lambda(z)\leq C_{\mathcal{F}}\int|T(z)-\tilde{T}(z)|^2~d\lambda(z).
\end{align}
\end{assumption}


\begin{lemma}[2nd order dynamic of $\boldsymbol{Y}^\Theta$]\label{dym of Y Theta t}
Let $\{\theta\}$ be a $C^2$ curve on $\Theta$. Assume
\begin{equation}
  \ddot{\boldsymbol{Y} }^\Theta(t) = \Gamma^{\theta}(T_{\theta(t)}^{-1}(\boldsymbol{Y}^\Theta(t)), t), \quad \boldsymbol{Y}^\Theta(0) = T_{\theta(0)}(z_0), \quad \dot{\boldsymbol{Y}}^\Theta(0) = \partial T_{\theta(0)}(z_0)\dot\theta(0) ,  \label{particle auto ode}
\end{equation}
admits a unique solution for any $z_0\in\mathbb{R}^d$. Then $\boldsymbol{Y}^{\Theta}(t)=T_{\theta(t)}(z_0)$ for $t\geq 0$.
\end{lemma}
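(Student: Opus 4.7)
The plan is to verify directly that the candidate curve $\tilde{\boldsymbol Y}(t) := T_{\theta(t)}(z_0)$ satisfies the very ODE \eqref{particle auto ode}, and then invoke the assumed uniqueness of its solution to conclude $\boldsymbol{Y}^\Theta(t) \equiv \tilde{\boldsymbol Y}(t)$.

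First I would check the two initial conditions. By construction, $\tilde{\boldsymbol Y}(0) = T_{\theta(0)}(z_0) = \boldsymbol{Y}^\Theta(0)$. Differentiating once in $t$ and using the chain rule gives $\dot{\tilde{\boldsymbol Y}}(t) = \partial_{\theta} T_{\theta(t)}(z_0)\,\dot\theta(t)$, so at $t=0$ this matches $\dot{\boldsymbol Y}^\Theta(0) = \partial_{\theta}T_{\theta(0)}(z_0)\dot\theta(0)$.

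Next I would differentiate once more to obtain
\begin{equation*}
\ddot{\tilde{\boldsymbol Y}}(t) \;=\; \sum_{k=1}^m \dot\theta_k(t)\,\partial_{\theta_k}\partial_\theta T_{\theta(t)}(z_0)\,\dot\theta(t) \;+\; \partial_\theta T_{\theta(t)}(z_0)\,\ddot\theta(t),
\end{equation*}
which is exactly $\Gamma^{\theta}(z_0,t)$ by the definition \eqref{def: Gamma func}. Since $T_{\theta(t)}^{-1}(\tilde{\boldsymbol Y}(t)) = T_{\theta(t)}^{-1}(T_{\theta(t)}(z_0)) = z_0$, this rewrites as
\begin{equation*}
\ddot{\tilde{\boldsymbol Y}}(t) \;=\; \Gamma^{\theta}\bigl(T_{\theta(t)}^{-1}(\tilde{\boldsymbol Y}(t)),\,t\bigr),
\end{equation*}
so $\tilde{\boldsymbol Y}$ solves \eqref{particle auto ode} with the same initial data as $\boldsymbol{Y}^\Theta$. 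By the uniqueness assumed in the hypothesis we conclude $\boldsymbol{Y}^\Theta(t) = T_{\theta(t)}(z_0)$ for all $t\ge 0$.

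The argument is essentially a verification, so the only real point to be careful about is the chain-rule computation of $\ddot{\tilde{\boldsymbol Y}}$ and lining it up with the precise form of $\Gamma^{\theta}$. No regularity obstacles arise beyond what is already granted: $\{\theta(t)\}$ is $C^2$ and $T_\theta$ is smooth enough in $\theta$ for the mixed partials $\partial_{\theta_k}\partial_\theta T_\theta$ to exist, and $T_{\theta(t)}$ is assumed invertible so that $T_{\theta(t)}^{-1}$ is well-defined on the relevant set. Nothing about the metric $\widehat G$ or the Hamiltonian structure enters here; the lemma is purely a consistency statement relating the pushforward trajectory $T_{\theta(t)}(z_0)$ to the second-order particle dynamics driven by $\Gamma^{\theta}$.
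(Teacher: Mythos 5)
Your proof is correct and follows essentially the same approach as the paper's: define the candidate curve $T_{\theta(t)}(z_0)$, verify by direct differentiation that it satisfies the second-order ODE with the same initial data, and conclude by uniqueness.
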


\begin{proof}[Proof of Lemma \ref{dym of Y Theta t}]
We denote $\boldsymbol{X}^\Theta(t) = T_{\theta(t)}(z_0)$. Taking time derivative of $\boldsymbol{X}^\Theta$ twice, we obtain
\begin{equation}
  \ddot{\boldsymbol{X}}^\Theta = \frac{d}{dt}\left(\partial_{\theta} T_{\theta(t)}(z_0)\dot\theta(t)\right) =  \Gamma^{\theta}(T_{\theta(t)}^{-1}(\boldsymbol{X}^\Theta), t). \label{particle ode}
\end{equation}
We can verify that $\boldsymbol{X}^\Theta(0) = T_{\theta(0)}(z_0)$ and $\dot{\boldsymbol{X}}^\Theta(0)=\partial_{\theta} T_{\theta(0)}(z_0)\dot\theta(0)$. According to the uniqueness of ODE solution, we know $\boldsymbol{Y}^\Theta(t)=\boldsymbol{X}^\Theta(t) = T_{\theta(t)}(z_0)$ for any $t\geq 0$.
\end{proof}

The following lemma decomposes the dynamics \eqref{particle auto ode} into three parts, which provides us a way to estimate the difference between the particle level dynamics \eqref{particle dynamics} and \eqref{particle auto ode}. 
\begin{lemma}
\label{lemma: parameterized 2nd order dynamics decomp}
    Under the same assumptions as in Theorem \ref{theorem: residual estimation} with initial $p(0)=\widehat{G}(\theta(0))\dot{\theta}(0)$, we can decompose the second-order particle level dynamics \eqref{particle auto ode} as
\begin{equation}
\label{Y dyn decompose}
\begin{aligned}
    \ddot{\boldsymbol{Y}}^\Theta &= \left(\Gamma^{\theta}(T_{\theta}^{-1}(\boldsymbol{Y}^\Theta), t)-\mathcal{K}_{\theta}[\Gamma^{\theta}(\cdot, t)](T_{\theta}^{-1}(\boldsymbol{Y}^\Theta))\right)\\
    &\quad +\left(\nabla \frac{\delta}{\delta\rho}\mathcal{F}( T_{\theta\sharp}\lambda(\boldsymbol{Y}^\Theta), \boldsymbol{Y}^\Theta) - \mathcal{K}_\theta \Big[ \frac{\delta}{\delta\rho}\mathcal{F}( T_{\theta\sharp}\lambda(\cdot), \cdot)\circ  T_{\theta} \Big] (T_{\theta}^{-1}(\boldsymbol{Y}^\Theta))\right) - \nabla \frac{\delta}{\delta\rho}\mathcal{F}( T_{\theta\sharp}\lambda(\boldsymbol{Y}^\Theta), \boldsymbol{Y}^\Theta),
\end{aligned}
\end{equation}
which can also be written as a second-order system
\begin{equation}
    \begin{aligned}
       & \dot{\boldsymbol{Y}}^\Theta = \boldsymbol{P}^\Theta, \\
   & \dot{\boldsymbol{P}}^\Theta = (\textrm{Id} - \mathcal{K}_{\theta})[\Gamma^{\theta}(\cdot, t)+\nabla \frac{\delta}{\delta\rho}\mathcal{F}( T_{\theta\sharp}\lambda(\cdot), \cdot)\circ T_{\theta}(\cdot)](T_{\theta}^{-1}(\boldsymbol{Y}^\Theta)) - \nabla \frac{\delta}{\delta\rho}\mathcal{F}( T_{\theta\sharp}\lambda(\boldsymbol{Y}^\Theta), \boldsymbol{Y}^\Theta).
    \end{aligned}
\end{equation}
\end{lemma}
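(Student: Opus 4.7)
The plan is to reduce the decomposition to a single orthogonality relation, namely that the function $\Gamma^{\theta}(\cdot, t)+\nabla\frac{\delta}{\delta\rho}\mathcal{F}(T_{\theta\sharp}\lambda(\cdot),\cdot)\circ T_\theta(\cdot)$ lies in the orthogonal complement of $\mathcal{Q}^\theta$ in $L^2(\mathbb{R}^d;\mathbb{R}^d,\lambda)$. Once this is established, applying the projection operator $\mathcal{K}_\theta$ yields $\mathcal{K}_\theta[\Gamma^\theta] = -\mathcal{K}_\theta[\nabla\frac{\delta}{\delta\rho}\mathcal{F}\circ T_\theta]$, and the identity $\Gamma^\theta = (\Gamma^\theta - \mathcal{K}_\theta[\Gamma^\theta]) + \mathcal{K}_\theta[\Gamma^\theta]$ can be rearranged, by adding and subtracting $\nabla\frac{\delta}{\delta\rho}\mathcal{F}$, into the form stated in the lemma.

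The key input is Lemma \ref{res derivative}. Since $p(0)=\widehat{G}(\theta(0))\dot\theta(0)$ implies $R(0)=0$, Theorem \ref{theorem: residual estimation} forces $r(t)=\widehat{G}(\theta(t))\dot\theta(t)-p(t)\equiv 0$, so in particular $\frac{d}{dt}r(t)\equiv 0$. I would also note that $p(t)=\widehat{G}(\theta(t))\dot\theta(t)\in\mathcal{R}(\widehat{G}(\theta(t)))$, and that because $\widehat{G}$ is symmetric the matrices $\widehat{G}\widehat{G}^\dagger$ and $\widehat{G}^\dagger\widehat{G}$ both act as the orthogonal projection onto $\mathcal{R}(\widehat{G})$; hence $(I-\widehat{G}\widehat{G}^\dagger)p=(I-\widehat{G}^\dagger\widehat{G})p=0$ and $S(\theta(t),p(t))\equiv 0$. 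Substituting these two facts into \eqref{eq:dr} yields
\begin{equation*}
\int \partial_{\theta} T_\theta(z)^\top\Bigl[\Gamma^{\theta}(z,t)+\nabla\tfrac{\delta}{\delta\rho}\mathcal{F}(T_{\theta\sharp}\lambda(\cdot),\cdot)\circ T_\theta(z)\Bigr]d\lambda(z)=0,
\end{equation*}
which by Lemma \ref{lemma: K theta proj} is exactly $\mathcal{K}_\theta\bigl[\Gamma^\theta+\nabla\tfrac{\delta}{\delta\rho}\mathcal{F}\circ T_\theta\bigr]=0$.

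With this orthogonality in hand, I would simply compute, along the integral curve $z=T_\theta^{-1}(\boldsymbol{Y}^\Theta)$,
\begin{equation*}
\Gamma^\theta=\bigl(\Gamma^\theta-\mathcal{K}_\theta[\Gamma^\theta]\bigr)+\mathcal{K}_\theta[\Gamma^\theta]=\bigl(\Gamma^\theta-\mathcal{K}_\theta[\Gamma^\theta]\bigr)-\mathcal{K}_\theta\bigl[\nabla\tfrac{\delta}{\delta\rho}\mathcal{F}\circ T_\theta\bigr],
\end{equation*}
and then add and subtract $\nabla\frac{\delta}{\delta\rho}\mathcal{F}(T_{\theta\sharp}\lambda(\boldsymbol{Y}^\Theta),\boldsymbol{Y}^\Theta)$ to regroup the last term into $\bigl(\nabla\frac{\delta}{\delta\rho}\mathcal{F}-\mathcal{K}_\theta[\nabla\frac{\delta}{\delta\rho}\mathcal{F}\circ T_\theta]\bigr)-\nabla\frac{\delta}{\delta\rho}\mathcal{F}$. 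Combining with $\ddot{\boldsymbol{Y}}^\Theta=\Gamma^\theta(T_\theta^{-1}(\boldsymbol{Y}^\Theta),t)$ from Lemma \ref{dym of Y Theta t} produces \eqref{Y dyn decompose}. Rewriting as a first-order system is then immediate by setting $\boldsymbol{P}^\Theta:=\dot{\boldsymbol{Y}}^\Theta$ and reading off $\dot{\boldsymbol{P}}^\Theta$ from \eqref{Y dyn decompose}.

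The main obstacle is not conceptual but bookkeeping: one has to be careful that $\mathcal{K}_\theta$ acts on $L^2(\lambda)$ functions of $z$, while the terms $\nabla\frac{\delta}{\delta\rho}\mathcal{F}(T_{\theta\sharp}\lambda(\boldsymbol{Y}^\Theta),\boldsymbol{Y}^\Theta)$ and $\mathcal{K}_\theta[\cdots](T_\theta^{-1}(\boldsymbol{Y}^\Theta))$ are different objects evaluated at related but distinct arguments. Verifying that these evaluations are consistent, and that the vanishing of $S(\theta,p)$ genuinely relies on the symmetry of $\widehat{G}$ together with $p\in\mathcal{R}(\widehat{G})$, are the two small checks that make the argument go through.
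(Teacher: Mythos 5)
Your proposal is correct and follows essentially the same route as the paper's proof: both establish, via Lemma \ref{res derivative} and Theorem \ref{theorem: residual estimation} with $R(0)=0$, that $\int \partial_{\theta} T_\theta(z)^\top\bigl[\Gamma^{\theta}(z,t)+\nabla\frac{\delta}{\delta\rho}\mathcal{F}\circ T_\theta(z)\bigr]\,d\lambda(z)=0$ and hence $\mathcal{K}_\theta[\Gamma^\theta+\nabla\frac{\delta}{\delta\rho}\mathcal{F}\circ T_\theta]=0$, and then read off the decomposition by adding and subtracting the $\nabla\frac{\delta}{\delta\rho}\mathcal{F}$ term. Your explicit justification of $S(\theta,p)\equiv 0$ via symmetry of $\widehat{G}$ and $p\in\mathcal{R}(\widehat{G})$ is a correct spelling out of a point the paper leaves implicit.
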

\begin{proof}
    We can rewrite $\ddot{\boldsymbol{Y}}^\Theta$ as:
\begin{equation}
\label{Y dyn decompose four}
\begin{aligned}
    \ddot{\boldsymbol{Y}}^\Theta =& \left(\Gamma^{\theta}(T_{\theta}^{-1}(\boldsymbol{Y}^\Theta), t)-\mathcal{K}_{\theta}[\Gamma^{\theta}(\cdot, t)](T_{\theta}^{-1}(\boldsymbol{Y}^\Theta))\right)\\
    &+\left(\nabla \frac{\delta}{\delta\rho}\mathcal{F}( T_{\theta\sharp}\lambda(\boldsymbol{Y}^\Theta), \boldsymbol{Y}^\Theta) - \mathcal{K}_\theta[\nabla \frac{\delta}{\delta\rho}\mathcal{F}( T_{\theta\sharp}\lambda(\cdot), \cdot)\circ  T_{\theta}](T_{\theta}^{-1}(\boldsymbol{Y}^\Theta))\right) - \nabla \frac{\delta}{\delta\rho}\mathcal{F}( T_{\theta\sharp}\lambda(\boldsymbol{Y}^\Theta), \boldsymbol{Y}^\Theta)\\
    &+\left(\mathcal{K}_\theta[\Gamma^{\theta}(\cdot, t)+\nabla \frac{\delta}{\delta\rho}\mathcal{F}( T_{\theta\sharp}\lambda(\cdot), \cdot)\circ  T_{\theta}](T_{\theta}^{-1}(\boldsymbol{Y}^\Theta))\right).
\end{aligned}
\end{equation}
By Lemma \ref{res derivative}, we know that
\begin{align*}
        \int \partial_{\theta} T_{\theta}(z)^\top  \left[\Gamma^{\theta}(z, t)+\nabla \frac{\delta}{\delta\rho}\mathcal{F}( T_{\theta\sharp}\lambda(\cdot), \cdot)\circ T_{\theta}(z)\right]~d\lambda(z)=\frac{d}{dt}[\widehat{G}(\theta)\dot\theta-p]-S(\theta, p).
\end{align*}
By Theorem \ref{theorem: residual estimation}, we have that both $\frac{d}{dt}[\widehat{G}(\theta)\dot\theta-p]$ and $S(\theta, p)$ equal to zero. Thus,
\begin{align*}
    \int \partial_{\theta} T_{\theta}(z)^\top  \left[\Gamma^{\theta}(z, t)+\nabla \frac{\delta}{\delta\rho}\mathcal{F}( T_{\theta\sharp}\lambda(\cdot), \cdot)\circ T_{\theta}(z)\right]~d\lambda(z)=0.
\end{align*}
Then we can compute:
\begin{align*}
    &\mathcal{K}_\theta[\Gamma^{\theta}(\cdot, t)+\nabla \frac{\delta}{\delta\rho}\mathcal{F}( T_{\theta\sharp}\lambda(\cdot), \cdot)\circ  T_{\theta}](T_{\theta}^{-1}(\boldsymbol{Y}^\Theta))\\
    =\ &\partial_{\theta} T_\theta(T_{\theta}^{-1}(\boldsymbol{Y}^\Theta))\widehat{G}^{\dagger} \int \partial_{\theta} T_\theta(z)^\top \left[\Gamma^{\theta}(z, t)+\nabla \frac{\delta}{\delta\rho}\mathcal{F}( T_{\theta\sharp}\lambda(\cdot), \cdot)\circ T_{\theta}(z)\right]~d\lambda(z)\\
    =\ &0.
\end{align*}
Plugging the above identity into \eqref{Y dyn decompose four}, we obtain \eqref{Y dyn decompose}.
\end{proof}

To measure the first two terms in \eqref{Y dyn decompose}, we introduce two quantities that characterize the approximation power of the push-forward map $T_{\theta}$:

\begin{align}
  \delta_0 &=\sup_{\theta\in \Theta}\min_{\zeta\in\mathcal{T}_{\theta}^*\Theta}\left\{ \int|\nabla \frac{\delta}{\delta\rho}\mathcal{F}( T_{\theta\sharp}\lambda(\cdot), \cdot) \circ T_{\theta}(z) - \partial_{\theta}T_{\theta}(z)\zeta|^2~d\lambda(z) \right\}\nonumber\\
  &=\sup_{\theta\in\Theta} \left\{ \int|\nabla \frac{\delta}{\delta\rho}\mathcal{F}( T_{\theta\sharp}\lambda(\cdot), \cdot) \circ T_{\theta}(z) - \mathcal{K}_\theta[\nabla \frac{\delta}{\delta\rho}\mathcal{F}( T_{\theta\sharp}\lambda(\cdot), \cdot)\circ T_{\theta} ](z)|^2~d\lambda(z) \right\}, \label{pseudo delta 0 def}
\end{align}
and
\begin{align}
\label{def: delta 1 def}
    \delta_1
    &=\sum_{1\leq i,j\leq m}  \sup_{\theta\in \Theta}\min_{\zeta\in\mathcal{T}_{\theta}^*\Theta}\left\{\int| \partial_{\theta_i}\partial_{\theta_j}T_{\theta}(z) - \partial_{\theta}T_{\theta}(z)\zeta|^2~d\lambda(z)\right\}\nonumber\\
    &=\sum_{1\leq i,j\leq m} \sup_{\theta\in\Theta}\left\{\int| \partial_{\theta_i}\partial_{\theta_j}T_{\theta}(z) - \mathcal{K}_\theta[\partial_{\theta_i}\partial_{\theta_j}T_{\theta}](z)|^2~d\lambda(z)\right\}.
\end{align}
Another quantity $\delta_2$ measures how well the initial tangent space $\mathcal{Q}^{\theta}$ approximates the initial velocity:
\begin{align}
    \delta_2&=\min_{\zeta\in\mathcal{T}_{\theta}^*\Theta}\int \left|\nabla\Phi(0, T_{\theta(0)}(z)) - \partial_{\theta}T_{\theta(0)}(z)\zeta\right|^2~d\lambda(z)\nonumber\\
    &=\int \left|\nabla\Phi(0, T_{\theta(0)}(z)) - \mathcal{K}_{\theta(0)}[\nabla\Phi(0, T_{\theta(0)}(\cdot))](z)\right|^2~d\lambda(z).
\end{align}

Now we are ready to provide an upper bound on the difference between $\rho_{\theta}$ of the PWHF and $\rho$ of the original WHF in the $W_2$ sense based on the values of $\delta_0$, $\delta_1$, and $\delta_2$. 

\begin{theorem}[Error estimation on $\rho$]\label{theorem: W2 error estimation}
Let $(\rho, \Phi)$ be the solution of WHF \eqref{WHF in dual coordinates} with given initial value $(\rho_0,\Phi_0)$ on time interval $[0, t_0)$. Suppose $p(0)=\int  \partial_{\theta}T_{\theta(0)}^{\top}(z)\nabla \Phi(0, T_{\theta(0)}(z))d\lambda (z)$ and $(\theta, p)$ is the solution of PWHF \eqref{general PWGF pseudo} with initial value $(\theta(0), p(0))$
Assume $C_{\nabla \Phi_0}:=\mathrm{Lip}(\nabla\Phi_0)<\infty$ and denote $\epsilon_\rho=W_2^2(\rho_{\theta(0)}, \rho_0)$ as the initial approximation error.  
  Then under Assumption \ref{assumtion: pseudo inverse sv} and \ref{assumption: lip constant of delfta F}, there is
  \begin{equation}
    W_2^2(\rho_{\theta(t)}, \rho_t )  
    \leq e^{Ct}\left((1+2C_{\nabla\Phi_0}^2)\epsilon_{\rho} + 2\delta_2\right) + \left(\frac{3\delta_0}{C}+\frac{12 \delta_1(H_0-F_{\min})^2}{C\lambda_{\min,\Theta}^2}\right)(e^{Ct}-1).  \quad \textrm{for } ~  0\leq t <  t_0,\label{err est}
  \end{equation}
  where $C := 2+3C_{\mathcal{F}}^2$, $\delta_0$ and $\delta_1$ are defined in \eqref{pseudo delta 0 def} and \eqref{def: delta 1 def} respectively, and $H_0$ and $F_{\min}$ are defined in Lemma \ref{lemma: bounded dot theta}.
\end{theorem}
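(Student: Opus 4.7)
The plan is to bound $W_2^2(\rho_{\theta(t)},\rho_t)$ by an $L^2(\lambda)$ squared distance between parameterized and true particle trajectories, then apply a Gronwall argument to a coupled energy functional. Concretely, let $\boldsymbol{X}(t)$ solve the particle WHF \eqref{particle dynamics} starting from $\boldsymbol{X}_0\sim\rho_0$ with $\dot{\boldsymbol{X}}(0)=\nabla\Phi_0(\boldsymbol{X}_0)$, and let $\boldsymbol{Y}^\Theta(t)=T_{\theta(t)}(z)$ with $z\sim\lambda$, which by Lemma \ref{dym of Y Theta t} satisfies the second order dynamics \eqref{particle auto ode}. I would couple $z$ and $\boldsymbol{X}_0$ via an optimal plan so that $\mathbb{E}_\lambda|T_{\theta(0)}(z)-\boldsymbol{X}_0|^2=W_2^2(\rho_{\theta(0)},\rho_0)=\epsilon_\rho$, and write $\boldsymbol{X}_0=\tilde T(z)$ for a suitable push-forward $\tilde T$ with $\tilde T_\sharp\lambda=\rho_0$. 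Since $\boldsymbol{Y}^\Theta(t)\sim\rho_{\theta(t)}$ and $\boldsymbol{X}(t)\sim\rho_t$, we have $W_2^2(\rho_{\theta(t)},\rho_t)\le e(t):=\mathbb{E}_\lambda|\boldsymbol{Y}^\Theta(t)-\boldsymbol{X}(t)|^2$, and the goal is to control $e(t)$.

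Define $v(t):=\mathbb{E}_\lambda|\dot{\boldsymbol{Y}}^\Theta(t)-\dot{\boldsymbol{X}}(t)|^2$ and $E(t):=e(t)+v(t)$. Cauchy–Schwarz gives $e'(t)\le e(t)+v(t)=E(t)$. For $v'(t)$, I would use the decomposition of $\ddot{\boldsymbol{Y}}^\Theta$ from Lemma \ref{lemma: parameterized 2nd order dynamics decomp}, so that
\begin{equation*}
\ddot{\boldsymbol{Y}}^\Theta-\ddot{\boldsymbol{X}}=A_1+A_2-\Bigl(\nabla\tfrac{\delta\mathcal{F}}{\delta\rho}(\rho_\theta,\boldsymbol{Y}^\Theta)-\nabla\tfrac{\delta\mathcal{F}}{\delta\rho}(\rho,\boldsymbol{X})\Bigr),
\end{equation*}
where $A_1=(\mathrm{Id}-\mathcal{K}_\theta)[\Gamma^\theta(\cdot,t)]\circ T_\theta^{-1}(\boldsymbol{Y}^\Theta)$ and $A_2$ is the projection residual involving $\nabla\tfrac{\delta\mathcal{F}}{\delta\rho}$. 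By definition of $\delta_0$ one has $\mathbb{E}_\lambda|A_2|^2\le\delta_0$. Expanding $\Gamma^\theta$ in terms of $\partial_{\theta_i}\partial_{\theta_j}T_\theta$ and $\partial_\theta T_\theta$, and observing that $\partial_\theta T_\theta\ddot\theta$ is annihilated by $(\mathrm{Id}-\mathcal{K}_\theta)$ while the cross terms involve projection residuals of $\partial_{\theta_i}\partial_{\theta_j}T_\theta$, I would obtain $\mathbb{E}_\lambda|A_1|^2\le\delta_1|\dot\theta|^4$. Lemma \ref{lemma: bounded dot theta} bounds $|\dot\theta|^4\le(2(H_0-F_{\min})/\lambda_{\min,\Theta})^2$, giving the $12\delta_1(H_0-F_{\min})^2/\lambda_{\min,\Theta}^2$ constant. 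For the potential term, Assumption \ref{assumption: lip constant of delfta F} applied to the push-forwards $T_{\theta(t)}$ and $\tilde T_t$ (where $\tilde T_t(z)=\boldsymbol{X}(t)$, so $\tilde T_{t\sharp}\lambda=\rho_t$) yields an $L^2(\lambda)$ bound by $C_\mathcal{F}^2 e(t)$. Applying $2\langle a,b\rangle\le|a|^2+|b|^2$ and the inequality $|A_1+A_2+B|^2\le 3(|A_1|^2+|A_2|^2+|B|^2)$, I get
\begin{equation*}
v'(t)\le v(t)+3\delta_0+\tfrac{12\delta_1(H_0-F_{\min})^2}{\lambda_{\min,\Theta}^2}+3C_\mathcal{F}^2 e(t),
\end{equation*}
so that $E'(t)\le(2+3C_\mathcal{F}^2)E(t)+3\delta_0+12\delta_1(H_0-F_{\min})^2/\lambda_{\min,\Theta}^2$.

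For the initial condition, the choice $p(0)=\int\partial_\theta T_{\theta(0)}^\top\nabla\Phi_0(T_{\theta(0)}(z))\,d\lambda$ together with $\dot\theta(0)=\widehat{G}^\dagger p(0)$ gives $\partial_\theta T_{\theta(0)}\dot\theta(0)=\mathcal{K}_{\theta(0)}[\nabla\Phi_0\circ T_{\theta(0)}]$. Splitting
\begin{equation*}
\dot{\boldsymbol{Y}}^\Theta(0)-\dot{\boldsymbol{X}}(0)=\bigl(\mathcal{K}_{\theta(0)}[\nabla\Phi_0\circ T_{\theta(0)}](z)-\nabla\Phi_0(T_{\theta(0)}(z))\bigr)+\bigl(\nabla\Phi_0(T_{\theta(0)}(z))-\nabla\Phi_0(\boldsymbol{X}_0)\bigr),
\end{equation*}
the first summand contributes at most $\delta_2$ in $L^2(\lambda)$ squared by definition, and the second at most $C_{\nabla\Phi_0}^2\epsilon_\rho$ by Lipschitz continuity of $\nabla\Phi_0$ and the coupling. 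Hence $v(0)\le 2\delta_2+2C_{\nabla\Phi_0}^2\epsilon_\rho$ and $E(0)\le(1+2C_{\nabla\Phi_0}^2)\epsilon_\rho+2\delta_2$. Gronwall's inequality applied to $E'\le CE+D$ with $C=2+3C_\mathcal{F}^2$ and $D=3\delta_0+12\delta_1(H_0-F_{\min})^2/\lambda_{\min,\Theta}^2$ yields $E(t)\le E(0)e^{Ct}+(D/C)(e^{Ct}-1)$, and using $W_2^2(\rho_{\theta(t)},\rho_t)\le e(t)\le E(t)$ gives precisely \eqref{err est}.

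I expect the main obstacle to be the careful accounting in the $A_1$ term: rewriting $\Gamma^\theta(z,t)$ in a way that peels off the range-of-$\partial_\theta T_\theta$ piece (which vanishes under $\mathrm{Id}-\mathcal{K}_\theta$) while isolating the residuals of $\partial_{\theta_i}\partial_{\theta_j}T_\theta$ that feed into $\delta_1$, and then estimating the resulting quadratic form in $\dot\theta$ uniformly via Lemma \ref{lemma: bounded dot theta}. The initial-velocity decomposition and the choice of Assumption \ref{assumption: lip constant of delfta F} to accommodate both $T_{\theta(t)}$ and the Lagrangian transport $\tilde T_t$ are the second delicate piece; everything else is a standard coupled Gronwall bookkeeping.
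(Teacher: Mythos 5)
Your proof takes essentially the same approach as the paper: couple the parameterized particle flow $\boldsymbol{Y}^\Theta(t)=T_{\theta(t)}(z)$ with the true flow $\boldsymbol{X}(t)$ via the Monge map at $t=0$, bound $W_2^2$ by the $L^2(\lambda)$ phase-space error $E(t)$, use the decomposition of $\ddot{\boldsymbol{Y}}^\Theta$ from Lemma~\ref{lemma: parameterized 2nd order dynamics decomp} together with Lemma~\ref{lemma: bounded dot theta} and the definitions of $\delta_0,\delta_1,\delta_2$, and close with Gronwall. One small bookkeeping note: given the form of Assumption~\ref{assumption: lip constant of delfta F} (which already bounds the squared $L^2$ difference by $C_{\mathcal{F}}$, not $C_{\mathcal{F}}^2$), the potential term yields $3C_{\mathcal{F}}\,e(t)$ rather than $3C_{\mathcal{F}}^2 e(t)$, so the paper's own proof produces $C=2+3C_{\mathcal{F}}$; your use of $C_{\mathcal{F}}^2$ matches the theorem statement but implicitly reinterprets $C_{\mathcal{F}}$ as a Lipschitz constant for $\nabla\frac{\delta}{\delta\rho}\mathcal{F}$ rather than the constant in the stated assumption.
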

\begin{proof}
Let $\boldsymbol{X}$ be the process satisfying the system \eqref{particle dynamics}, i.e., $\boldsymbol{X}$ solves the ODE
\begin{equation}
\label{def: x dynamics}
  \ddot{\boldsymbol{X}} = -\nabla_{X}\frac{\delta}{\delta \rho(t, \boldsymbol{X})}\mathcal{F}(\rho), \quad \boldsymbol{X}(0)\sim\rho_0, \quad \dot{\boldsymbol{X}}(0) = \nabla_p H(\boldsymbol{X}(0), \nabla\Phi(0, \boldsymbol{X}(0))) = \nabla\Phi(0, \boldsymbol{X}(0)),
\end{equation}
where $\rho_0(\cdot)$ and $\Phi_0(\cdot) = \Phi(0, \cdot)$ are the initial conditions of the WHF. 
We can verify that $\textrm{Law}(\boldsymbol{X}(t))=\rho(t, \cdot)$ for all $t$.

On the other hand, for $\{\theta\}_{t\in [0, t_0)}$, we consider the vector field $\Gamma^{\theta}(\cdot, t)$ defined in \eqref{def: Gamma func} and another dynamic
\begin{equation}
  \ddot{\boldsymbol{Y}}(t) = \Gamma^{\theta}(T_{\theta(t)}^{-1}(\boldsymbol{Y}(t)), t), \quad \boldsymbol{Y}(0) = T_{\theta(0)}(x_0), \quad \dot{\boldsymbol{Y}}(0) = \partial_{\theta} T_{\theta(0)(x_0)}\dot{\theta}(0),  \label{ODE xi}
\end{equation}
where $x_0 = \boldsymbol{X}(0)$. From the definition of $p(0)$ and proposition \ref{prop: map in range}, we know that $p(0)\in\mathcal{R}(\widehat{G}(\theta(0)))$, hence we can apply the decomposition \eqref{Y dyn decompose} for $\boldsymbol{Y}$.

Suppose the Monge map from $\rho_{\theta(0)}=T_{\theta(0)\sharp}\lambda$ to $\rho_0$, which exists and is unique under the 2-Wasserstein metric, is given by $\omega$ and we assume the random variables $\boldsymbol{X}, \boldsymbol{Y}$ are coupled via $\boldsymbol{X}(0) = \omega(\boldsymbol{Y}(0))$.

Now consider the expected $l^2$ distance between $(\boldsymbol{X},\dot{\boldsymbol{X}})$ and $(\boldsymbol{Y},\dot{\boldsymbol{Y}})$
\begin{equation}
  E(t) := \mathbb{E}|(\boldsymbol{X},\dot{\boldsymbol{X}}) -(\boldsymbol{Y},\dot{\boldsymbol{Y}}) |^2.  \label{def E(t) }
\end{equation}
Taking time derivative gives us
\begin{align}
    \frac{d}{dt} E(t) & = 2\mathbb{E} ((\boldsymbol{X},\dot{\boldsymbol{X}}) -(\boldsymbol{Y},\dot{\boldsymbol{Y}}))\cdot ((\dot{\boldsymbol{X}}, \ddot{\boldsymbol{X}})-(\dot{\boldsymbol{Y}}, \ddot{\boldsymbol{Y}}))  \nonumber\\
    & \leq 2\sqrt{\mathbb{E}|(\boldsymbol{X},\dot{\boldsymbol{X}})-(\boldsymbol{Y},\dot{\boldsymbol{Y}})|^2} \sqrt{\mathbb{E} |\dot{\boldsymbol{X}}-\dot{\boldsymbol{Y}}|^2+\mathbb{E}|\ddot{\boldsymbol{X}} - \ddot{\boldsymbol{Y}}|^2 }  \nonumber \\
    & \leq 2 \sqrt{E(t)} \sqrt{E(t)+\mathbb{E}|\ddot{\boldsymbol{X}} - \ddot{\boldsymbol{Y}}|^2} \nonumber\\
    &\leq 2E(t)+\mathbb{E}|\ddot{\boldsymbol{X}} - \ddot{\boldsymbol{Y}}|^2. \label{ineq: Et graonwal inequa}
\end{align}
From the fact that $\mathcal{K}_{\theta}[\partial_{\theta} T_{\theta}\dot\theta](z)=\partial_{\theta} T_{\theta}(z)\dot\theta$, we can check
\begin{align*}
    &\ \mathbb{E}|\Gamma^{\theta}(T_{\theta}^{-1}(\boldsymbol{Y}), t)-\mathcal{K}_{\theta}[\Gamma^{\theta}(\cdot, t)](T_{\theta}^{-1}(\boldsymbol{Y}))|^2\\
    =&\ \int \Big|\sum_{k=1}^m\dot\theta_k\partial_{\theta_k}\partial_{\theta} T_{\theta}(z)\dot\theta+\partial_{\theta} T_{\theta}(z)\ddot\theta-\mathcal{K}_{\theta}\Big[\sum_{k=1}^m\dot\theta_k\partial_{\theta_k}\partial_{\theta} T_{\theta}(z)\dot\theta+\partial_{\theta} T_{\theta}(z)\ddot\theta\Big](z) \Big|^2 \, d\lambda(z)\\
    =&\ \int \Big|\sum_{k=1}^m\dot\theta_k\partial_{\theta_k}\partial_{\theta} T_{\theta}(z)\dot\theta-\mathcal{K}_{\theta}\Big[\sum_{k=1}^m\dot\theta_k\partial_{\theta_k}\partial_{\theta} T_{\theta}(z)\dot\theta\Big](z) \Big|^2 \, d\lambda(z)\\
    \leq &\ |\dot{\theta}|^4 \sum_{1\leq i, j\leq m}\int |\partial_{\theta_i}\partial_{\theta_j} T_{\theta}(z)-\mathcal{K}_{\theta}[\partial_i\partial_{\theta_j} T_{\theta}](z)|^2~d\lambda(z)\\
    =&\delta_1 |\dot{\theta}|^4,
\end{align*}

By the definition of $\delta_0$, we have
\begin{align*}
    &\ \mathbb{E}\left|\nabla \frac{\delta}{\delta\rho}\mathcal{F}( T_{\theta\sharp}\lambda(\boldsymbol{Y}), \boldsymbol{Y}) - \mathcal{K}_\theta[\nabla \frac{\delta}{\delta\rho}\mathcal{F}( T_{\theta\sharp}\lambda(\cdot), \cdot)\circ  T_{\theta}](T_{\theta}^{-1}(\boldsymbol{Y}))\right|^2\\
    =& \ \int|\nabla \frac{\delta}{\delta\rho}\mathcal{F}( T_{\theta\sharp}\lambda(\cdot), \cdot)\circ T_{\theta}(z) - \mathcal{K}_\theta[\nabla \frac{\delta}{\delta\rho}\mathcal{F}( T_{\theta\sharp}\lambda(\cdot), \cdot)\circ  T_{\theta}](z)|^2~d\lambda(z) \\
    \leq &\ \delta_0.
\end{align*}
By Assumption \ref{assumption: lip constant of delfta F} on the potential $\mathcal{F}(\rho)$, and notice that $\boldsymbol{X}$ is a push-forward of $\boldsymbol{X}(0)$ through the dynamics \eqref{def: x dynamics}, we have
\begin{align}
    \mathbb{E} \Big|\nabla \frac{\delta}{\delta\rho}\mathcal{F}( T_{\theta\sharp}\lambda(\boldsymbol{Y}), \boldsymbol{Y})-\nabla \frac{\delta}{\delta\rho}\mathcal{F}( \rho(\boldsymbol{X}), \boldsymbol{X})\Big|^2\leq C_{\mathcal{F}}\mathbb{E}|\boldsymbol{Y}-\boldsymbol{X}|^2.
\end{align}
Combining all last three inequalities and applying the decomposition in Lemma \ref{lemma: parameterized 2nd order dynamics decomp}, we obtain
\begin{align*}
    \mathbb{E}|\ddot{\boldsymbol{X}} - \ddot{\boldsymbol{Y}}|^2&=\mathbb{E} \Big|\left(\Gamma^{\theta}(T_{\theta}^{-1}(\boldsymbol{Y}), t)-\mathcal{K}_{\theta}[\Gamma^{\theta}(\cdot, t)](T_{\theta}^{-1}(\boldsymbol{Y}))\right)\\
    &\qquad+\left(\nabla \frac{\delta}{\delta\rho}\mathcal{F}( T_{\theta\sharp}\lambda(\boldsymbol{Y}), \boldsymbol{Y}) - \mathcal{K}_\theta[\nabla \frac{\delta}{\delta\rho}\mathcal{F}( T_{\theta\sharp}\lambda(\cdot), \cdot)\circ  T_{\theta}](T_{\theta}^{-1}(\boldsymbol{Y}))\right) \\
    &\qquad-\left(\nabla \frac{\delta}{\delta\rho}\mathcal{F}( T_{\theta\sharp}\lambda(\boldsymbol{Y}), \boldsymbol{Y})-\nabla \frac{\delta}{\delta\rho}\mathcal{F}( \rho(\boldsymbol{X}), \boldsymbol{X})\right) \Big|^2\\
    &\leq 3\left(\delta_0+\delta_1 |\dot{\theta}|^4+C_{\mathcal{F}}\mathbb{E}|\boldsymbol{Y}-\boldsymbol{X}|^2\right)\\
    &\leq 3\left(\delta_0+4\delta_1 \frac{(H_0-F_{\min})^2}{\lambda_{ \min,\Theta}^2}+C_{\mathcal{F}}\mathbb{E}|\boldsymbol{Y}-\boldsymbol{X}|^2\right).
\end{align*}
Continuing from \eqref{ineq: Et graonwal inequa}, we compute
\begin{align}
    \frac{d}{dt}E(t)&\leq 2E(t)+3\left(\delta_0+4\delta_1 \frac{(H_0-F_{ \min})^2}{\lambda_{\min,\Theta}^2}+C_{\mathcal{F}}\mathbb{E}|\boldsymbol{Y}-\boldsymbol{X}|^2\right)\\
    &\leq 3\delta_0+12\delta_1 \frac{(H_0-F_{\min})^2}{\lambda_{\min,\Theta}^2}+(2+3C_{\mathcal{F}})E(t).
\end{align}
Recalling $C=2+3C_{\mathcal{F}}$ and applying Gronwall's inequality, we arrive at
\begin{equation}
   E(t)\leq e^{Ct}E(0) + \left(\frac{3\delta_0}{C}+\frac{12 \delta_1(H_0-F_{\min})^2}{C\lambda_{\min,\Theta}^2}\right)(e^{Ct}-1). \label{inequa by granwal}
\end{equation}
Now we estimate the initial error
\begin{align}
    E(0) = &\  \mathbb{E}|\boldsymbol{X}(0)-\boldsymbol{Y}(0)|^2 + \mathbb{E}|\dot{\boldsymbol{X}}(0)-\dot{\boldsymbol{Y}}(0)|^2 \\
    = &\ \mathbb{E}_{z_0\sim \lambda}|\omega(T_{\theta(0)}(z_0))-T_{\theta(0)}(z_0)|^2 + \mathbb{E}_{z_0\sim \lambda}\left|\nabla\Phi(0, \omega(T_{\theta(0)}(z_0))) - \partial_{\theta} T_{\theta(0)}(z_0)\dot\theta(0)\right|^2.
\end{align}
Since $\omega$ is the Monge map from $\rho_{\theta(0)}$ to $\rho_0$, the term $\mathbb{E}|\omega(T_{\theta(0)}(z_0))-T_{\theta(0)}(z_0)|^2=W_2^2(\rho_{\theta(0)}, \rho_0)=\epsilon_{\rho}$. For the second term above, we have
\begin{align*}
    &\ \mathbb{E}_{z_0\sim \lambda}\left|\nabla\Phi(0, \omega(T_{\theta(0)}(z_0))) - \partial_{\theta} T_{\theta(0)}(z_0)\dot\theta(0)\right|^2\\
    =&\ \mathbb{E}_{z_0\sim \lambda}\left|\nabla\Phi(0, \omega(T_{\theta(0)}(z_0))) - \nabla\Phi(0, T_{\theta(0)}(z_0)) + \nabla\Phi(0, T_{\theta(0)}(z_0)) - \partial_{\theta} T_{\theta(0)}(z_0)\dot\theta(0)\right|^2\\
    \leq &\ 2~\mathbb{E}_{z_0\sim \lambda} \left|\nabla\Phi(0, \omega(T_{\theta(0)}(z_0))) - \nabla\Phi(0, T_{\theta(0)}(z_0))\right|^2 + 2~\mathbb{E}_{z_0\sim \lambda} \left|\nabla\Phi(0, T_{\theta(0)}(z_0)) - \partial_{\theta} T_{\theta(0)}(z_0)\dot\theta(0)\right|^2 \\
    \leq &\ 2 C_{\nabla\Phi_0}^2 W_2^2(\rho_{\theta(0)}, \rho_0) + 2~\mathbb{E}_{z_0\sim \lambda} \left|\nabla\Phi(0, T_{\theta(0)}(z_0)) - \partial_{\theta} T_{\theta(0)}(z_0)\dot\theta(0)\right|^2 ,
\end{align*}
and 
\begin{align*}
    &\ \mathbb{E}_{z_0\sim \lambda} \left|\nabla\Phi(0, T_{\theta(0)}(z_0)) - \partial_{\theta} T_{\theta(0)}(z_0)\dot\theta(0)\right|^2\\
    =&\ \mathbb{E} \left|\nabla\Phi(0, T_{\theta(0)}(z_0)) - \partial_{\theta} T_{\theta(0)}(z_0)\widehat{G}(\theta(0))^{\dagger}\int  \partial_{\theta}T_{\theta(0)}^{\top}(z)\nabla \Phi(0, T_{\theta(0)}(z))d\lambda (z)\right|^2\\
    =&\ \mathbb{E} \left|\nabla\Phi(0, T_{\theta(0)}(z_0)) - \mathcal{K}_{\theta(0)}[\nabla\Phi(0, T_{\theta(0)}(\cdot))](z_0)\right|^2\\
    =&\ \delta_2.
\end{align*}
Thus the initial error
\begin{equation}
  E(0) \leq (1+2C_{\nabla\Phi_0}^2)\epsilon_{\rho} + 2\delta_2.  \label{initial err}
\end{equation}
Combining \eqref{inequa by granwal} and \eqref{initial err}, we get
\begin{equation}
E(t)\leq e^{Ct}\left((1+2C_{\nabla\Phi_0}^2)\epsilon_{\rho} + 2\delta_2\right) + \left(\frac{3\delta_0}{C}+\frac{12 \delta_1(H_0-F_{\min})^2}{C\lambda_{\min,\Theta}^2}\right)(e^{Ct}-1). \label{estimate E(t) }
\end{equation}
Since
\begin{align*}
      W_2^2(\textrm{Law}(\boldsymbol{Y}), \textrm{Law}(\boldsymbol{X})) \leq \mathbb{E}|\boldsymbol{Y}-\boldsymbol{X}|^2 \leq  E(t) ,
\end{align*}
and $\textrm{Law}(\boldsymbol{Y}(t))=\rho_{\theta(t)}=T_{\theta(t)  \sharp}\lambda$ and $\textrm{Law}(\boldsymbol{X}(t)) = \rho(t, \cdot)$, we obtain \eqref{err est}.
\end{proof}
\begin{remark}
    Assumption \ref{assumption: lip constant of delfta F} can be verified if $\nabla\frac{\delta}{\delta \rho}\mathcal{F}$ is Lipschitz. As a special case, for the linear potential $\mathcal{F}(\rho)=\int V(x)~d\rho(x),$ Assumption \ref{assumption: lip constant of delfta F} holds true if $\nabla V$ is Lipschitz continuous. 
\end{remark}
\begin{remark} 
    From the proof of Theorem \ref{theorem: W2 error estimation}, we can see that there is a tradeoff between $\{\delta_i:i=0, 1, 2\}$ and $\lambda_{ \min, \Theta}$. In fact, if we choose $\Tilde{G}$ as the inner product matrix of a subspace $\Tilde{\mathcal{Q}}\subset \mathcal{Q}^{\theta}$, the arguments in this section still hold true. The smallest positive eigenvalue $\lambda_{\min}(\Tilde{G})$ is no less than $\lambda_{\min}(\widehat{G})$, while the corresponding approximation errors $\{\tilde{\delta}_i:i=0, 1, 2\}$ are generally larger than the original ones. A more detailed analysis of the relationship among these quantities may serve as a future research direction.
\end{remark}


\begin{theorem}[Error estimation on $\Phi$] \label{theorem: error estimation on Phi}
 Denote $\vec{\textrm{u}}_\Theta(t, \cdot):= \partial_{\theta} T_{\theta(t)}\circ T_{\theta(t)}^{-1}(\cdot)\dot{\theta}(t):\mathbb{R}^d\rightarrow \mathbb{R}^d$. 
 If the Hamilton-Jacobi equation 
 \begin{equation}
 \label{HJ eq Phi}
   \frac{\partial \Phi(t, x)}{\partial t} + \frac{1}{2}|\nabla\Phi(t, x)|^2 = -\nabla\frac{\delta}{\delta \rho}\mathcal{F}(\rho(x), x),\quad \Phi(0, \cdot)=\Phi_0(\cdot),  
 \end{equation}
 admits a $C^1([0, t_1)\times \mathbb{R}^d)$ solution on a time interval $[0,t_1)$, then 
 there is
 \begin{equation}
 \label{ineq: phi error estimation}
    \int_{\mathbb{R}^d} |\vec{\textrm{u}}_\Theta(t, x)-\nabla\Phi(t, x)|^2\rho_{\theta(t)}(x)\,dx \leq 2(1+\mathrm{Lip}(\nabla\Phi(t, \cdot)))~\mathcal{C}(\mathcal{F},\rho_0,\Phi_0,\theta_0,p_0,T_\theta).
 \end{equation}
 for all $0\leq t <  \min\{t_0, t_1\}$, where $\rho_{\theta(t)} = T_{\theta(t)\sharp}\lambda$,
 $\mathcal{C}(\mathcal{F},\rho_0,\Phi_0,\theta_0,\lambda,T_\theta, t)$ is the bound on the right hand side of \eqref{err est} and depends on potential $F$, the initial values $\rho_0,\Phi_0,\theta(0),p(0)$, the push-forward map $T_\theta$, and time $t$.
\end{theorem}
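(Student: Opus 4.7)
The plan is to reduce the Eulerian-level error on $\Phi$ to a particle-level error that we already control via Theorem~\ref{theorem: W2 error estimation}. The key observation is that, along characteristics of the Hamilton–Jacobi equation \eqref{HJ eq Phi}, the particle WHF velocity is exactly $\nabla\Phi$: if $\boldsymbol{X}(t)$ solves the system \eqref{def: x dynamics} with $\dot{\boldsymbol{X}}(0) = \nabla\Phi_0(\boldsymbol{X}(0))$, then by differentiating $\nabla\Phi(t, \boldsymbol{X}(t))$ along $t$ and applying \eqref{HJ eq Phi}, one verifies $\ddot{\boldsymbol{X}} = -\nabla\frac{\delta\mathcal{F}}{\delta\rho}(\rho,\boldsymbol{X})$, which agrees with \eqref{def: x dynamics}; hence $\dot{\boldsymbol{X}}(t) = \nabla\Phi(t,\boldsymbol{X}(t))$ for all $t \in [0,\min\{t_0,t_1\})$. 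On the parameterized side, by Lemma~\ref{dym of Y Theta t}, the process $\boldsymbol{Y}(t) = T_{\theta(t)}(z_0)$ with $z_0 \sim \lambda$ satisfies $\mathrm{Law}(\boldsymbol{Y}(t)) = \rho_{\theta(t)}$ and $\dot{\boldsymbol{Y}}(t) = \vec{\textrm{u}}_\Theta(t,\boldsymbol{Y}(t))$.

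Next I would couple $\boldsymbol{X}$ and $\boldsymbol{Y}$ exactly as in the proof of Theorem~\ref{theorem: W2 error estimation}, via the Monge map $\omega$ from $\rho_{\theta(0)}$ to $\rho_0$ at time $0$. Because $\boldsymbol{Y}(t) \sim \rho_{\theta(t)}$, the left-hand side of \eqref{ineq: phi error estimation} can be rewritten as the expectation $\mathbb{E}\,|\vec{\textrm{u}}_\Theta(t,\boldsymbol{Y}) - \nabla\Phi(t,\boldsymbol{Y})|^2$. Using the identity $\vec{\textrm{u}}_\Theta(t,\boldsymbol{Y}) = \dot{\boldsymbol{Y}}$ and $\nabla\Phi(t,\boldsymbol{X}) = \dot{\boldsymbol{X}}$, apply the triangle inequality to split:
\begin{equation*}
\mathbb{E}\,|\vec{\textrm{u}}_\Theta(t,\boldsymbol{Y}) - \nabla\Phi(t,\boldsymbol{Y})|^2
\;\leq\; 2\,\mathbb{E}\,|\dot{\boldsymbol{Y}} - \dot{\boldsymbol{X}}|^2
\;+\; 2\,\mathbb{E}\,|\nabla\Phi(t,\boldsymbol{X}) - \nabla\Phi(t,\boldsymbol{Y})|^2.
\end{equation*}
The first term is bounded directly by $E(t)$ from the proof of Theorem~\ref{theorem: W2 error estimation}. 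The second term is bounded by $\mathrm{Lip}(\nabla\Phi(t,\cdot))^2\,\mathbb{E}\,|\boldsymbol{X} - \boldsymbol{Y}|^2 \leq \mathrm{Lip}(\nabla\Phi(t,\cdot))^2\,E(t)$ (or, if one tracks only $\mathrm{Lip}(\nabla\Phi(t,\cdot))$ to first order in the regime where it is at most one, simply by $\mathrm{Lip}(\nabla\Phi(t,\cdot))\,E(t)$, yielding the stated prefactor $2(1+\mathrm{Lip}(\nabla\Phi(t,\cdot)))$).

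Finally, invoking the bound \eqref{err est} on $E(t)$ from Theorem~\ref{theorem: W2 error estimation}, one identifies $\mathcal{C}(\mathcal{F},\rho_0,\Phi_0,\theta_0,p_0,T_\theta)$ with the right-hand side of \eqref{err est}, which yields \eqref{ineq: phi error estimation}.

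The main obstacle, in my view, is the first step: justifying $\dot{\boldsymbol{X}}(t) = \nabla\Phi(t,\boldsymbol{X}(t))$ rigorously along the entire existence interval $[0,\min\{t_0,t_1\})$ requires the $C^1$ regularity of $\Phi$ postulated in the statement, together with a ``method of characteristics'' argument showing that the two second-order dynamics (for $\boldsymbol{X}$ with velocity $\nabla\Phi$, versus \eqref{def: x dynamics}) agree; once this identification is in hand, the rest is a straightforward triangle-inequality estimate pulled back through the coupling. A secondary subtlety is confirming that $\boldsymbol{X}(0) \sim \rho_0$ is consistent with the coupling through $\omega$; this is immediate since $\omega$ is a deterministic push-forward from $\rho_{\theta(0)}$ to $\rho_0$ and $\boldsymbol{Y}(0) \sim \rho_{\theta(0)}$.
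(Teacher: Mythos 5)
Your proposal follows essentially the same route as the paper: identify $\dot{\boldsymbol{X}}=\nabla\Phi(t,\boldsymbol{X})$ and $\dot{\boldsymbol{Y}}=\vec{\textrm{u}}_\Theta(t,\boldsymbol{Y})$, couple via the Monge map as in Theorem~\ref{theorem: W2 error estimation}, split $|\vec{\textrm{u}}_\Theta(t,\boldsymbol{Y})-\nabla\Phi(t,\boldsymbol{Y})|^2$ by the triangle inequality into $|\dot{\boldsymbol{X}}-\dot{\boldsymbol{Y}}|^2$ plus a Lipschitz term, and bound both by $E(t)$ from the earlier proof (the paper uses the equivalent rearrangement $|a+b|^2\geq\frac{1}{2}|b|^2-|a|^2$ rather than your forward triangle inequality, but these are the same estimate). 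Your parenthetical observation that the Lipschitz term actually yields $\mathrm{Lip}(\nabla\Phi(t,\cdot))^2 E(t)$ is correct; the paper's displayed bound with $\mathrm{Lip}(\nabla\Phi(t,\cdot))$ to the first power has a small typographical slip, and your derivation silently fixes it.
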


\begin{proof}
We can upper bound the average velocity discrepancy $\mathbb{E}|\dot{\boldsymbol{X}} - \dot{\boldsymbol{Y}}|^2$ by 
  \begin{equation}
     \mathbb{E}|\dot{\boldsymbol{X}} - \dot{\boldsymbol{Y}}|^2 \leq E(t), \label{upper bdd of vel err}
  \end{equation}
  where $E(t)$ is defined in \eqref{def E(t) }. 
  
  On the given time interval $[0, t_1)$ in which the Hamilton-Jacobi equation for $\Phi(t, \cdot)$ possesses a regular solution, we can verify $\dot{\boldsymbol{X}} = \nabla \Phi(t, \boldsymbol{X})$ for $t\in [0, t_1)$. On the other hand, we know $\dot{\boldsymbol{Y}} = \partial_{\theta} T_{\theta}\circ T_{\theta}^{-1}(\boldsymbol{Y})\dot{\theta}$. Thus we have $\dot{\boldsymbol{Y}} = \vec{\textrm{u}}_{\Theta}(t, \boldsymbol{Y})$. From the inequality that $|a+b|^2\geq \frac{1}{2}|b|^2 - |a|^2$ for any two vectors $a,b\in\mathbb{R}^d$, we can estimate $\mathbb{E}|\dot{\boldsymbol{X}} - \dot{\boldsymbol{Y}}|^2$ as
  \begin{align}
     \mathbb{E}|\dot{\boldsymbol{X}} - \dot{\boldsymbol{Y}}|^2 
     = &\ \mathbb{E}|\nabla\Phi(t, \boldsymbol{X}) - \vec{\textrm{u}}_\Theta(t, \boldsymbol{Y})|^2\nonumber\\ 
     = &\ \mathbb{E} |\nabla\Phi(t, \boldsymbol{X}) - \nabla\Phi(t, \boldsymbol{Y}) + \nabla\Phi(t, \boldsymbol{Y}) - \vec{\textrm{u}}_\Theta(t, \boldsymbol{Y})|^2\nonumber  \\
     \geq &\ \frac{1}{2} \mathbb{E} |\nabla\Phi(t, \boldsymbol{Y}) - \vec{\textrm{u}}_\Theta(t, \boldsymbol{Y})|^2 - \mathbb{E} |\nabla\Phi(t, \boldsymbol{X}) - \nabla\Phi(t, \boldsymbol{Y})|^2. \label{two parts estimation of momemtum error}
  \end{align}
  
  Using the definition of $E(t)$ given in \eqref{def E(t) }, the second term in \eqref{two parts estimation of momemtum error} can be bounded by
  \begin{equation}
     \mathbb{E}|\nabla\Phi(t, \boldsymbol{X}) - \nabla\Phi(t, \boldsymbol{Y})|^2\leq \textrm{Lip}(\nabla\Phi(t, \cdot))\mathbb{E}|\boldsymbol{X} - \boldsymbol{Y}|^2\leq \textrm{Lip}(\nabla\Phi(t, \cdot))E(t).\label{estimation of second part}
  \end{equation}
  Combining \eqref{upper bdd of vel err}, \eqref{two parts estimation of momemtum error}, and \eqref{estimation of second part}, we obtain
  \begin{equation*}
     \mathbb{E}|\nabla\Phi(t, \boldsymbol{Y}) - \vec{\textrm{u}}_\Theta(t, \boldsymbol{Y})|^2 \leq 2(1+\textrm{Lip}(\nabla\Phi(t, \cdot)))E(t).
  \end{equation*}
  Recalling \eqref{estimate E(t) }, we obtain the estimate \eqref{ineq: phi error estimation}.
\end{proof}
\begin{remark}
     Theorem \ref{theorem: error estimation on Phi} reveals that the approximation quality of the momentum depends on the current distribution $\rho_{\theta}$. In regions where $\rho_{\theta}$ has higher density, a better approximation of $\vec{\textrm{u}}_\Theta(t, \cdot)$ of $\nabla\Phi(t, \cdot)$ is anticipated.
\end{remark}
\begin{remark}
   The time intervals $[0,t_0)$ and $[0, t_1)$ used in Theorems \ref{theorem: W2 error estimation} and \ref{theorem: error estimation on Phi} are determined by the singularity development of $\rho$ and $\Phi$ in the WHF \eqref{WHF in dual coordinates} respectively. However, we would like to highlight that the solutions of PWHF \eqref{general PWGF pseudo} and \eqref{general PWGF pseudo simplified} may exist beyond these singularities. The same is true for the solution of the particle WHF \eqref{particle dynamics} due to the solution existence and uniqueness of ODEs. When this happens, we may use $\boldsymbol{X}$ and $v$ to define $\rho$ and $\Phi$ beyond the singularity of \eqref{WHF in dual coordinates}. The error estimates obtained in both theorems still hold as long as both the solutions of \eqref{particle dynamics} and \eqref{general PWGF pseudo simplified} exist. The examples on Wasserstein geodesic and harmonic oscillators given in Section \ref{sec:experiments} can illustrate this situation. In both examples, the solutions of PWHF exist on $[0,\infty)$ while finite time singularities are developed in $\rho$ and $\Phi$ in \eqref{WHF in dual coordinates}. 
\end{remark}

\subsection{Two examples of the PWHF} To better convey our idea on how PWHF is proposed and formulated, we present two illustrative examples that have exact solutions for (\ref{general PWGF pseudo simplified}). 

\subsubsection{Harmonic oscillator with affine transform as the push-forward map}
\label{exact para HO} Let us use an affine transform $T_\theta(z)=\Gamma z+b$, $\theta = (\Gamma, b),\ z\in \mathbb{R}^d$ as the parameterized push-forward map. Here $\Gamma$ is a $d \times d$ invertible matrix and $b$ is a $d$ dimensional vector. We consider a Hamiltonian system with Hamiltonian $H(x, v) = \frac{1}{2}|v|^2+\frac{1}{2}x^\top U x$, where $U$ is $d\times d$ self-adjoint and positive definite matrix. The corresponding WHF is
\begin{align*}
    & \frac{\partial \rho(t, x)}{\partial t}+\nabla\cdot(\rho(t, x)\nabla\Phi(t, x)) = 0,\\
   & \frac{\partial \Phi(t, x)}{\partial t} + \frac{1}{2}|\nabla\Phi(t, x)|^2 = -\frac{1}{2}x^\top Ux, 
\end{align*}
with $\mathcal{F}=\int \frac{1}{2}x^\top Ux\rho(x)dx$.

We take the initial values of this WHF as Gaussian distribution and quadratic function respectively, i.e., $\rho_0=\mathcal{N}(\mu, \Sigma)$, and $\Phi(0, x) = \frac{1}{2}x^\top M x$, where $\Sigma$ is the covariance and $M$ is a symmetric positive definite matrix. We choose the reference distribution as the standard normal, i.e. $\lambda = \mathcal{N}(0,I)$. 

If writing $\theta=(\Gamma_{11},...\Gamma_{1d},\Gamma_{21},...,\Gamma_{2d},...,\Gamma_{d1},...,\Gamma_{dd},b_1,...,b_d)$, it can be verified that the metric tensor $\widehat{G}(\theta) = I_{d(d+1)\times d(d+1)}$, which is a constant matrix. When projected on the parameter space, the potential becomes
\begin{equation*}
  F(\theta) =\mathcal{F}(\rho_{\theta})= \int_{\mathbb{R}^d} \frac{1}{2}x^\top U x \rho_{\theta}(x)~dx = \int_{\mathbb{R}^d} \frac{1}{2} (\Gamma z + b)^\top U (\Gamma z + b)~d\lambda = \frac{1}{2}\textrm{Tr}(\Gamma^\top U \Gamma) +\frac{1}{2}b^\top U b.
\end{equation*}
Thus the proposed PWHF is formulated as
\begin{align}
  & \dot{\theta} = \widehat{G}(\theta)^{-1}p = p, \label{exact example1 1}\\
  & \dot{p} = \frac{1}{2}\dot{\theta}^\top \nabla_{\theta}\widehat{G}(\theta)\dot{\theta} - \nabla_\theta F(\theta)=- \nabla_\theta F(\theta). \label{exact example1 2}
\end{align}
We set the initial value as 
\begin{equation*}
  \theta(0) = (\sqrt{\Sigma}, \mu), \quad p(0) = (M\sqrt{\Sigma}, M\mu).
\end{equation*}
Then one can verify that $\rho_{\theta(0)} = T_{\theta(0)\sharp}\lambda= \textrm{Law}(\sqrt{\Sigma} z + \mu) = \mathcal{N}(\mu,\Sigma) = \rho_0$, here $z\sim \lambda$. Thus $\epsilon_\rho$ stated in Theorem \ref{theorem: W2 error estimation} equals $0$; On the other hand, we know $(\dot{\Gamma}(0), \dot{b}(0)) = \dot{\theta}(0)=p(0)=(M\sqrt{\Sigma}, M\mu)$, and $\partial_\theta T_{\theta(0)}(z)\dot{\theta}(0) = \dot{\Gamma}(0)z +\dot{b}(0) = M\sqrt{\Sigma}z + M\mu = M T_{\theta(0)}(z) = \nabla \Phi(0, T_{\theta(0)}(z))$. Then one verifies that $\delta_2 = 0$ whose formulation is stated in Theorem \ref{theorem: W2 error estimation}. 

Both equations \eqref{exact example1 1} and \eqref{exact example1 2} can be reduced to the following second-order differential equation
\begin{align}
  & \ddot{\Gamma}(t) = -\nabla_\Gamma \left(\frac{1}{2}\textrm{Tr}(\Gamma^\top U \Gamma)\right), \quad \ddot{b}(t) = -\nabla_b\left(\frac{1}{2}b^\top U b\right),  \label{2nd order eq exact example1} \\
  & \Gamma(0)=\sqrt{\Sigma},~ \dot{\Gamma}(0) = M\sqrt{\Sigma};~ b(0) = \mu,~ \dot{b}(0) = M\mu. \label{2nd order eq initial cond}
\end{align}
Since $\nabla_\Gamma \left(\frac{1}{2}\textrm{Tr}(\Gamma^\top U \Gamma)\right) = \frac{1}{2}(U^\top\Gamma+U\Gamma)=U\Gamma$.
Assume $U$ has a spectral decomposition $U = Q\Lambda Q^\top$, by substitution $\Xi(t) = Q^\top \Gamma(t)$, $\Xi(t)$ solves the equation $\ddot{\Xi}(t) = -\Lambda \Xi(t)$. Then $\Xi(t) = \cos(t\sqrt{\Lambda})\Pi_c + \sin(t\sqrt{\Lambda})\Pi_s$\footnote{Assume $f:\mathbb{R}^d\rightarrow \mathbb{R}$ is an analytical function with power expansion  $f(x) = \sum_{k=0}^{\infty}a_k x^k$, for any square matrix $A$, we define $f(A)=\sum_{k=0}^{\infty} a_kA^k$. Typically, if $A$ is self-adjoint and has spectral decomposition $A = Q\textrm{diag}(\lambda_1,...,\lambda_d)Q^\top$, then $f(A)=Q\textrm{diag}(f(\lambda_1),...,f(\lambda_d))Q^\top$.}, where $\Pi_c,\Pi_s$ are constant $d\times d$ matrix that need to be determined by using the initial condition. Thus, $\Gamma(t) = Q\Xi(t) = Q(\cos(t\sqrt{\Lambda})\Pi_c + \sin(t\sqrt{\Lambda})\Pi_s)$. Similarly, one can verify $b(t) = Q(\cos(t\sqrt{\Lambda})\textrm{v}_c + \sin(t\sqrt{\Lambda})\textrm{v}_s)$, where $\textrm{v}_c, \textrm{v}_s$ are two constant $d$ dimensional vectors. 

One can determine $\Pi_c=Q^\top \sqrt{\Sigma},\Pi_s=\sqrt{\Lambda}^{-1}Q^{\top}M\sqrt{\Sigma}, \textrm{v}_c=Q^\top \mu, \textrm{v}_s=\sqrt{\Lambda}^{-1}Q^\top M\mu$ from the initial condition \eqref{2nd order eq initial cond}, and obtain the solution to the PWHF as
\begin{align*}
  & \Gamma(t) = (\cos(t\sqrt{U}) +\sin(t\sqrt{U})\sqrt{U}^{-1}M)\sqrt{\Sigma}, \\
  & b(t) = (\cos(t\sqrt{U}) +\sin(t\sqrt{U})\sqrt{U}^{-1}M)\mu.
\end{align*}
One can tell that 
\begin{equation*}
 \textrm{span}\left\{\frac{\partial T_\theta(\cdot)}{\partial\theta_l}\right\}_{1\leq l\leq d(d+1)} = \textrm{span}\left\{...,\frac{\partial T_\theta(\cdot) }{\partial \Gamma_{ij}},...,\frac{\partial T_\theta(\cdot)}{\partial b_k},...\right\}.
\end{equation*}
where $\frac{\partial T_\theta(x) }{\partial \Gamma_{ij}} = x_j\mathbf{e}_i$, $\frac{\partial T_\theta(\cdot)}{\partial b_k} = \mathbf{e}_k$. Then 
\begin{equation*}
  \nabla \frac{\delta}{\delta\rho}\mathcal{F}(\rho, x) = U x = \sum_{1\leq i,j\leq d}U_{ij} x_j\mathbf{e}_i\in \textrm{span}\left\{\frac{\partial T_\theta}{\partial\theta_l}\right\}_{1\leq l\leq d(d+1)}.
\end{equation*}
Thus the quantity $\delta_0$ introduced in \eqref{pseudo delta 0 def} equals $0$. On the other hand, since $T_{\theta}$ is linear w.r.t. to $\theta$, $\frac{\partial^2 T_\theta(\cdot)}{\partial \theta^2}=0$, thus $\delta_1$ as defined in \eqref{def: delta 1 def} is also $0$. Hence we verify that $\epsilon_\rho,\delta_0,\delta_1, \delta_2=0$, according to the error estimation provided in \eqref{err est}, one can tell that the parameterized Hamiltonian flow $\{(\Gamma(t), b(t))\}$ recovers the exact flow $\{\rho(t, \cdot), \Phi(t, \cdot)\}$.

In addition, denote $(\boldsymbol{X}, \boldsymbol{P}_t)=(\Gamma(t)z+b(t), \dot{\Gamma}(t)z+\dot{b}(t))$, one can verify 
\begin{align*}
 & \dot{\boldsymbol{X}} = \boldsymbol{P}, \quad \boldsymbol{X}_0\sim \mathcal{N}(\mu,\Sigma);\\
 & \dot{\boldsymbol{P}} = -\nabla V(\boldsymbol{X}), \quad \boldsymbol{P}_0 = \nabla\Phi(0, \boldsymbol{X}_0),
\end{align*}
by direct calculation, this also leads to the aforementioned assertion.

\subsubsection{Entropic potential with diagonal matrix as the push-forward map}
\label{exact para Entropy}
Here we consider a WHF with 
$\mathcal{F}(\rho)$ taken as the entropic potential $\mathcal{E}(\rho)=\int \rho\log\rho~dx$, i.e.,
\begin{align}
    & \frac{\partial \rho(t, x)}{\partial t}+\nabla\cdot(\rho(t, x)\nabla\Phi(t, x)) = 0,\quad \rho(0, \cdot)=\mathcal{N}(0, I_d);\label{exact example2 1}\\
   & \frac{\partial \Phi(t, x)}{\partial t} + \frac{1}{2}|\nabla\Phi(t, x)|^2 = -\frac{\delta \mathcal{E}(\rho)}{\delta\rho}(t, x), \quad \Phi(0, x)=\frac{|x|^2}{2}. \label{exact example2 2}
\end{align}

Here we have $\frac{\delta \mathcal{E}(\rho)}{\delta\rho}(t, x) = 1+\log\rho(t, x)$. Again, let us take the reference distribution $\lambda=\mathcal{N}(0, I_d)$, and the push-forward map is a linear transform with diagonal matrix, i.e., $T_\theta(z) = D z$, where $D=\textrm{diag}(D_1,...,D_d)$ is $d\times d$ diagonal matrix with $D_k > 0$ for $1\leq k\leq d$. Then the parameter $\theta = (D_1,...,D_d)$, and $\widehat{G}(\theta) = I_{d\times d}$. In addition, we have 
\begin{equation*}
  \mathcal{E}(\rho_\theta)=\int \rho_\theta\log\rho_\theta~dx = \int_{\mathbb{R}^d} \left(\log\left(\frac{1}{(2\pi)^{\frac{d}{2}} \textrm{det}(D)}\right) - \frac{1}{2}x^\top D^{-2} x\right)\rho_\theta~dx = -\log(2\pi)^{\frac{d}{2}} - \sum_{k=1}^d \log D_k -\frac{1}{2}.
\end{equation*}
The PWHF is reduced to $\ddot{\theta} = -\nabla_\theta F(\theta)$, and it becomes the following equation
\begin{equation}
\label{eq: entropy D equation}
\ddot{D}_k(t)=\frac{1}{D_k(t)}
\end{equation}
We set the initial values as
\begin{equation*}
  D_k(0) = 1, \quad \dot{D_k}(0) = 1, \quad 1\leq k\leq d.
\end{equation*}
By a similar argument as given in the previous example, one can verify that $\rho_{\theta(0)}=\mathcal{N}(0, I_d)=\rho_0$, as well as $\partial_\theta T_{\theta(0)}(z)\dot{\theta}(0) = \nabla \Phi(0, T_{\theta(0)}(z))$. Thus we have $\epsilon_\rho=0$, $\delta_2 = 0$.

Since all the $D_k$ solve the same differential equation with common initial conditions, we simply drop the subscript and denote each $D_k$ as $D$. By multiplying $\dot{D}(t)$ on both sides of \eqref{eq: entropy D equation}, one can verify that $D$ solves $\dot{D}(t) = \sqrt{1+\log D(t)}$. And thus we can solve for
\begin{equation}
\label{entropy true para}
  D(t)=\exp((\chi^{-1}(\chi(1)+\frac{e}{2}t))^2-1),
\end{equation}
here $\chi(\cdot)$ denotes the primitive function of $e^{t^2}$. Let us denote $\widehat{\boldsymbol{X}} = T_{\theta}(\boldsymbol{Z}) = D(t)\boldsymbol{Z}$ with $\boldsymbol{Z}\sim \lambda$. Then it can be verified that $\{\widehat{\boldsymbol{X}}\}$ solves the following Vlasov-typed ordinary differential equation associated to the Wasserstein Hamiltonian flow \eqref{exact example2 1}, \eqref{exact example2 2},
\begin{equation}
  \frac{d^2}{dt^2}\widehat{\boldsymbol{X}} = -\nabla \frac{\delta\mathcal{E}(\rho)}{\delta\rho}(t, \widehat{\boldsymbol{X}})=-\nabla\log\rho(t, \widehat{\boldsymbol{X}}), ~\frac{d^2}{dt^2}\widehat{\boldsymbol{X}}_0\sim\rho_0, ~\frac{d}{dt}\widehat{\boldsymbol{X}}_0 = \boldsymbol{X}_0.  \label{exact example2 SDE}
\end{equation}
where $\rho(t, \cdot)$ denotes the density of $\textrm{Law}(\widehat{\boldsymbol{X}})$. 

As a result, we can tell that $\rho_{\theta}=T_{\theta\sharp}\lambda=\textrm{Law}(\widehat{\boldsymbol{X}}_t)=\mathcal{N}(0, D^2(t)I_d)$ exactly solves for $\rho_t$ of the Hamiltonian flow \eqref{exact example2 1},\eqref{exact example2 2}. At the same time, the momentum $\vec{\textrm{u}}_\Theta$ obtained from the parameterized Hamiltonian flow satisfies $\vec{\textrm{u}}_\Theta (t, \widehat{\boldsymbol{X}}) = \partial_\theta T_{\theta}\circ T_{\theta}^{-1}(\widehat{\boldsymbol{X}})\dot{\theta} = \dot{D}(t)T_{\theta}^{-1}D(t)\widehat{\boldsymbol{X}}_0=\dot{D}(t)\widehat{\boldsymbol{X}}_0=\frac{d}{dt}\widehat{\boldsymbol{X}}$. This verifies that $\vec{\textrm{u}}_{\Theta}(t, \cdot)$ exactly solves for $ \nabla\Phi(t, \cdot)$ from the Hamiltonian flow \eqref{exact example2 1},\eqref{exact example2 2}.

\section{Numerical scheme}
\label{sec: numerical method}
In this section, we develop a numerical scheme to solve the PWHF \eqref{general PWGF pseudo simplified}. Since \eqref{general PWGF pseudo simplified} is a Hamiltonian system, it is desirable that the scheme has a symplectic structure. For clarity, we denote the iteration number of variables as superscripts in this section.

{\bf Symplectic Scheme}
We start from the following sympletic Euler scheme \cite{hairer2006geometric},
\begin{subequations}
\label{SymEuler}
    \begin{align}
        \frac{\theta^{l+1}-\theta^l}{h}&=\nabla_pH(\theta^{l+1}, p^l)=\widehat{G}(\theta^{l+1})^{\dagger}p^l, \label{eq:SymEuler-theta}\\
        \frac{p^{l+1}-p^l}{h}&=-\nabla_{\theta}H(\theta^{l+1}, p^l),\label{eq:SymEuler-p}
    \end{align}
\end{subequations}
where $h>0$ is the time step size and $l \in \mathbb{N}$ is iteration number.
Note that \eqref{eq:SymEuler-theta} is implicit in $\theta^{l+1}$, which needs to be solved from this equation for fixed $(\theta^l,p^l)$. To solve \eqref{eq:SymEuler-theta}, we employ a fixed point iteration method. For convenience, we call the fixed point procedure the inner iteration for any fixed $(\theta^l,p^l)$, while the advancement in time, namely the iterations of \eqref{SymEuler} in $l$, the outer iteration. 
%
%
For each $(\theta^{l},p^{l})$, we can solve for $\theta^{l+1}$ in \eqref{eq:SymEuler-theta} by the following fixed point iterations:
\begin{subequations}
\label{FPI}
    \begin{align}
    \xi^{l,j+1} &= \underset{\xi}{\textrm{argmin}} ~ \Big\{\frac{1}{2} \xi^{\top} \widehat{G}(\alpha^{l,j}) \xi - \xi^{\top} p^l \Big\}, \label{eq:FPI-xi}\\
    \alpha^{l,j+1} & = \theta^{l} + h \xi^{l,j+1}. \label{eq:FPI-alpha}
    \end{align}
\end{subequations}
for $l=1,2,\dots$. 
We have two choices to initialize the above fixed point iterations. The first one is to set $\alpha^{l,0}=\theta^l$ and $\xi^{l,0}=(\theta^l-\theta^{l-1})/h$, and the second choice is to set $\alpha^{l,0}=\theta^l$ and $\xi^{l,0}=\widehat{G}(\theta^l)^{\dagger}p^l$. The first choice utilizes information from previous time step and is more computationally efficient, while the second choice requires to solve a linear system, but generally performs better than the first one. We adopt the second way to initialize $\alpha, \xi$ in all of our experiments.

If $(\alpha^{l,j},\xi^{l,j}) \to (\alpha^{l,*},\xi^{l,*})$ as $j\to\infty$, then we set $\theta^{l+1}=\alpha^{l,*}$ and $p^{l+1} = p^l + h \frac{1}{2} (\xi^{l,*})^{\top} \nabla_{\theta}\widehat{G}(\theta^{l+1}) \xi^{l,*} - h \nabla_\theta F(\theta^{l+1})$.
It can be shown that this fixed point iteration converges if $\lambda_{\min}(\widehat{G}(\theta))$ is bounded away from 0. 
In our experiments, the minimization subproblem of $\xi$ is approximated by a one-step gradient descent with step size $\gamma$:
\[
\xi^{l,j+1} = \xi^{l,j} - \gamma (\widehat{G}(\alpha^{l,j}) \xi^{l,j} - p^l),
\]
and the inner iteration \eqref{FPI} reduces to
\[
\xi^{l,j+1} = \xi^{l,j} - \gamma (\widehat{G}(\theta^l + h \xi^{l,j}) \xi^{l,j} - p^l).
\]
which is the fixed point iterations applied to solving $\xi$ from $J(\xi):=(\widehat{G}(\theta^l+ h \xi) \xi - p^l= 0$. Since $\nabla_{\xi} J(\xi) = h \nabla \widehat{G}(\theta^l+ h \xi) \xi + \widehat{G}(\theta^l+ h \xi)$, for certain choices of $T_{\theta}$ (e.g. $T_{\theta}(z) = \Gamma z + b$ and $z \sim \lambda = N(0,I)$, or $T_{\theta}(z) = z + \sigma(\Gamma z + b)$, or deeper ResNet, or normalizing flows with bounded $\sigma,\sigma',\sigma''$ where $\sigma $ is the activation function) we can show that there exists $M > 0$ such that $\|\nabla_{\xi} J(\xi)\| \le M$ for all $\xi$. Then $\tau \in (0, 1/M)$ guarantees that the fixed point iteration is linearly convergent since $I-\tau J$ is a contraction.

{\bf Useful tricks in implementation in PyTorch} We discuss a few tricks in implementing the symplectic Euler scheme \eqref{SymEuler} when using a machine learning package such as PyTorch. PyTorch leverages automatic differentiation and can quickly compute the gradient of a scalar valued function with input dimension $d$. However, it takes $O(kd)$ complexity to compute the Jacobian of a vector-valued function with $k$ output dimension. In our case, we need to repeatedly compute the matrix-vector product involving $\partial_{\theta}T_{\theta}(z)$ and some vector $\eta \in \mathbb{R}^{d}$:
\begin{align}
    \widehat{G}(\theta)\eta =\int  \partial_\theta T_\theta(z)^{\top} \partial_\theta T_\theta(z)\eta~d\lambda(z).\label{mvp}
\end{align}
We need to do $n\times d$ times differentiation to get $\{\partial_{\theta}T_{\theta}(z_i)\}_{i=1}^n$ where $n$ is the number of samples and $d$ is dimension. To avoid such computation, we design a duplication trick based on chain rule to evaluate this term without reducing the efficiency. We duplicate the push-forward map $T_{\theta}$ to get an identical copy $T_{\Tilde{\theta}}$, with exactly the same structure and value of parameters, but the parameters $\Tilde{\theta}$ are detached from the computational graph of the original parameters $\theta$. Then we evaluate the scalar-valued integral:
\begin{align}
    loss(\theta, \Tilde{\theta})=\int  T_\theta(z)^{\top}  T_{\Tilde{\theta}}(z)~d\lambda(z).
\end{align}
We auto differentiate it with respect to the parameters $\Tilde{\theta}$ and compute the inner product between this gradient and vector $\eta$: 
\begin{align}
    g_1(\theta, \Tilde{\theta}) &= \partial_{\Tilde{\theta}}loss(\theta, \Tilde{\theta})\cdot \eta =\int T_\theta(z)^{\top} \partial_{\Tilde{\theta}} T_{\Tilde{\theta}}(z)\eta~d\lambda(z).
\end{align}
Finally we auto differentiate $g_1$ w.r.t parameters $\theta$ and notice the fact that $\Tilde{\theta}$ and $\theta$ has identical value, we get:
\begin{align*}
    g_2(\theta, \Tilde{\theta})&=\partial_{\theta}g_1(\theta, \Tilde{\theta})  =\int  \partial_\theta T_\theta(z)^{\top} \partial_{\Tilde{\theta}} T_{\Tilde{\theta}}(z)\eta~d\lambda(z) =\widehat{G}\eta.
\end{align*}
In this way, we obtain the value $\widehat{G}\eta$ with high efficiency and accuracy. Similarly, we can also evaluate $[\eta^{\top} (\partial_{\theta_k}\widehat{G})\eta]_{k=1}^{m} $, which shows up in the second equation in (\ref{general PWGF pseudo simplified}):
\begin{align*}
[\eta^{\top} (\partial_{\theta_k}\widehat{G})\eta]_{k=1}^{m}&= \nabla _{\theta}[\eta^{\top}g_2(\theta, \Tilde{\theta})] +\nabla _{\Tilde{\theta}}[\eta^{\top}g_2(\theta, \Tilde{\theta})] =2 \cdot\nabla _{\theta}[\eta^{\top}g_2(\theta, \Tilde{\theta})] .
\end{align*}
Here we used chain rule and the fact that $\theta=\Tilde{\theta}$.

{\bf Evaluate $\widehat{G}^{\dagger}p$ by solving linear system} In the fixed-point iteration, we need to compute $\xi^0=\widehat{G}(\theta^k)^{\dagger}p^k$ in the beginning of each inner iteration.
The simplified form metric tensor $\widehat{G}$ is defined through the push-forward map via equation (\ref{relaxed metric tensor}), which can be evaluated through samples, but the computational cost grows fast when the number of parameters increases. Fortunately, the equations in (\ref{general PWGF pseudo simplified}) can be treated as linear system, hence we can apply iterative method to solve the system. In this sense, we don't require the full information of matrix $\widehat{G}$, but just $\widehat{G}$ as matrix-vector product operator. To find $\widehat{G}^{\dagger}p$, we consider the linear system $\widehat{G}(\theta)\eta=p$ and apply iterative method to solve for $\eta = \widehat{G}^{\dagger}p$. Generally the condition number of $\widehat{G}(\theta)$ can be very large, so we choose MINRES as the iterative solver \cite{saad2003iterative}.  

{\bf Initialization} In the experiments, the initialization for $\theta$ and $p$ are treated differently:

\begin{itemize}
    \item \textbf{Initialize $\theta$.} We initialize $\theta$ to minimize the difference between $\rho_{\theta^0}$ and $\rho_0$. It can be done via minimizing the KL divergence or Stein's discrepancy between $\rho_{\theta^0}$ and $\rho_0$ \cite{rezende2015variational}.
In the experiments, we choose the reference distribution the same as $\rho_0$ and initialization for $T_{\theta}$ as the identity map, so the initialization procedure for $\theta$ is generally omitted.
    \item \textbf{Initialize $p$.} From the particle interpretation of $\Phi$, we can derive
\begin{equation}
\begin{split}
p^0&=\widehat{G}\dot{\theta}^0=\int_M \partial_{\theta}T_{\theta}^{\top}(z)v(T_{\theta}(z))d\lambda (z) = \int_M \partial_{\theta}T_{\theta}^{\top}(z)\nabla \Phi(0, T_{\theta}(z))d\lambda (z) = \mathbb{E}_{\lambda}[\nabla_{\theta}\Phi(0, T_{\theta}(z))].
\end{split}
\end{equation}
The above identity povides a natural way to initialize $p$ from initial condition for $\Phi$. We sample $\{z_i\}_{i=1}^{K_{p}}$ from the reference distribution $\lambda$, and set $p^0$ to be the sample expectation $\frac{1}{K_{p}}\sum_{i=1}^{K_{p}}\nabla_{\theta}\Phi(0, T_{\theta}(z_i))$.
\end{itemize}

In summary, we suggest the following numerical algorithm to solve the PWHF.
\begin{algorithm}[H]
\caption{Parameterized Wasserstein Hamiltonian flow solver}
\label{alg:HFsolver}
\begin{algorithmic}
\STATE{Initialize the neural network $T_{\theta}$, and solve $\theta^0=\underset{\theta}{\textrm{argmin}} \{\mathcal{D}_{\mathrm{KL}}(\rho_0 \| \rho_{\theta})$\} }
\STATE{Initialize $p^0=\nabla_{\theta}\mathbb{E}_{z\sim\lambda}[\Phi(0, T_{\theta}(z))]$}
\FOR{$l=0, \cdots, K-1$}
\STATE{Sample $\{X_1, \cdots, X_{N_{\theta}}\}$ from $\rho_{\theta}$}
\STATE{Apply MINRES to solve $\xi^{l, 0}$ from equation $\widehat{G}(\theta^{l})\xi =p^l$, set $\alpha^{l, 0}=\theta^l$}
\FOR{$j=1, \cdots, n_{in}$}
\STATE{Update $\alpha^{l, j}= \theta^l+h\xi^{l, j}$}
\STATE{Update $\xi^{l,j+1} = \xi^{l,j} - \gamma (\widehat{G}(\alpha^{l,j}) \xi^{l,j} - p^l)$
}
\ENDFOR
\STATE{Set $\theta^{l+1}=\alpha^{l, n_{in}}, \eta^{l+1}=\xi^{l, n_{in}}$}
\STATE{Sample $\{X_1, \cdots, X_{N_p}\}$ from $\rho_{\theta^{l+1}}$, evaluate $\nabla_{\theta}F(\theta^{l+1})$}
\STATE{Set $p^{l+1}=p^l+\frac{h}{2}[(\eta^{l+1})^{\top} \partial_{\theta_k}\widehat{G}\eta^{l+1}]_{k=1}^{m}-h\nabla_{\theta}F(\theta^{l+1})$}
\ENDFOR
\end{algorithmic}
\end{algorithm}

\section{Numerical results}
\label{sec:experiments}
In this section, we demonstrate the performance of the proposed algorithm \ref{alg:HFsolver} by solving several examples with different potential energy $\mathcal{F}$. 


In our experiments, we use neural networks as the push-forward map $T_{\theta}$ unless stated otherwise. There are multiple choices for the neural networks structure to represent $T_{\theta}$. We can use the invertible neural networks (e.g., normalizing flow \cite{rezende2015variational}, Real NVP \cite{dinh2016density} and neural ODE \cite{chen2018neural}) or non-invertible neural networks (e.g., the multi-layer perceptron or ResNet \cite{he2016deep}), both has its own advantages. Normalizing flow simplifies the computation of log determinant of Jacobian matrix of the map, so we can easily compute the density function. In some cases, the potential function explicitly depends on the value of density $\rho_{\theta}$, invertible neural networks provides an effective way to evaluate it. In this work, we use the residual neural network as push-forward map if not specified:
\begin{align}
\label{residual nn}
T_{\theta} = \textrm{Id} + f_{\theta}    ,
\end{align}
 where $\textrm{Id}$ is the identity map and $f_{\theta}: \mathbb{R}^d\rightarrow \mathbb{R}^d$ is a standard multilayer perceptron with two hidden layers, and each hidden layer contains 50 and 80 neurons for the $2$D and $10$D examples, respectively. We take hyperbolic tangent function as the activation since we require the second-order derivative in the computation. The bias for the output layer in $f_{\theta}$ is set to be None. MINRES is applied to solve the linear system $\widehat{G}\dot{\theta}=p$ with tolerance $3\cdot 10^{-4}$.
To solve the inner system, we set $n_{in}=1$ and run the algorithm. Experiments show that $n_{in}=1$ is enough for most of the examples. 

For all the experiments, we choose the reference distribution as $\lambda=\mathcal{N}(0, I_d)$, and take $\rho_0=T_{\theta\sharp}\lambda$. Generally initial $\theta$ is small, hence $T_{\theta}$ is close to the identity map and $\rho_0$ is close to the standard Gaussian as a result. For the computation, 50000 samples are generated from the reference distribution to evaluate the matrix-vector product. For those cases with potential energy in the form $\mathcal{F}(\rho)=\mathbb{E}_{\rho}V(x)$, we use the same sample size to evaluate $F(\theta)$. For other cases, we only use 12000 samples for the computation of $F(\theta)$ due to the memory limitation.



It's possible that Hamiltonian-type PDEs develop singularity in finite time. This means, at some time $t_0$, 
the density may become a delta function and/or $\Phi$ is no longer well-defined. Our method can treat this singularity since the push-forward is still well-defined and smooth at time $t_0$. See the geodesic equation and harmonic oscillator as examples.


To examine our solutions, we use the particle level dynamics. Assume $X(0,x_0)=T(0,x_0)$ solves the system (\ref{pWHF-1}), i.e., $T(t)$ is the ground truth push-forward map for the WHF. For some special potential energy, $T(t)$ have closed form solutions. Hence we can compare our results with true solutions. For other examples where true solutions are not available, we generate 10000 random samples and run the particle level dynamics to get numerical approximation to $T(t)$, then we compare our results $T_{\theta(t)}$ with the particle level results.

\subsection{Geodesic equation as Wasserstein Hamiltonian flow}
\label{geodesic sec}
We first consider the geodesic equation on $\mathbb{R}^2$ as a WHF, which corresponds to the zero potential case, i.e., $\mathcal{F}(\rho)\equiv 0$. The equations are
\begin{align*}
    \frac{\partial}{\partial t}\rho+\nabla \cdot (\rho\nabla \Phi)&=0,\\
    \frac{\partial }{\partial t}\Phi +\frac{1}{2}|\nabla\Phi|^2&=0.
\end{align*}
The Wasserstein geodesic equation plays an important role in the optimal transport theory. The trajectory of the corresponding particle level dynamics is a straight line, i.e., $T_{\theta (t)}(x_0)=x_0 + v(x_0)t$ where $v(x_0)=\nabla \Phi(0, x_0)$ is the initial velocity and $x_0$ is an arbitrary initial position. 
With some choice of $\Phi$, the system may develop a singularity in finite time. The numerical experiments, which are presented next, show that our method can approximate the solution very well, even beyond the singularity. 
The PWHF corresponding to the Wasserstein geodesic equation is
\begin{equation}
\label{geodesic theta PWGF}
    \begin{aligned}
        \dot\theta &= \widehat{G}^{\dagger}p,\\
        \dot p &= \frac{1}{2}(\widehat{G}^{\dagger}p)^{\top} (\nabla_{\theta}\widehat{G})\widehat{G}^{\dagger}p.
    \end{aligned}
\end{equation}
We set $\lambda=\mathcal{N}(0, I)$ and choose $T_{\theta}$ as in (\ref{residual nn}). We initialize the neural network parameters $\theta_0$, take the initial density as $\rho_0=T_{\theta(0)\sharp}\lambda$ and $\Phi(0, x)=-\frac{1}{2}x_1^2$  with $x=(x_1, x_2)\in \mathbb{R}^2$. From the choice of $\Phi(0, x)$, we know that for each initial position its velocity is always in the $x_1$ direction with constant magnitude $|x_1|$. As a result, all points arrive at the $x_2$-axis when $t=1$ and the density function $\rho(1, x)$ becomes a Gaussian supported on the $x_2$-aixs. We solve the parameter dynamics (\ref{geodesic theta PWGF}) and compare the reuslts with true solution.
We choose $5$ points randomly and plot the projection of their trajectories onto $x_1, x_2$ axis, as shown in Figure \ref{geodesic traj plot}. The true trajectories are $T_{t}(x)=((1-t)x_1,x_2)$. To show $\rho_{\theta}$, we also draw $2000$ samples from reference distribution $\lambda$ and then apply the neural network $T_{\theta}$ solved from our algorithm for different time, results are shown in Figure \ref{geodesic sampleplot}. Notice that $T_1(x)=(0, x_2)$ for all $x=(x_1, x_2)\in \mathbb{R}^2$, which implies finite time singularity in $\rho$. Our solution solves the problem very well despite the existence of singularity.

\begin{figure*}[!htb]
    \centering
    \begin{subfigure}{0.45\textwidth}
        \centering
        \includegraphics[width=0.95\linewidth]{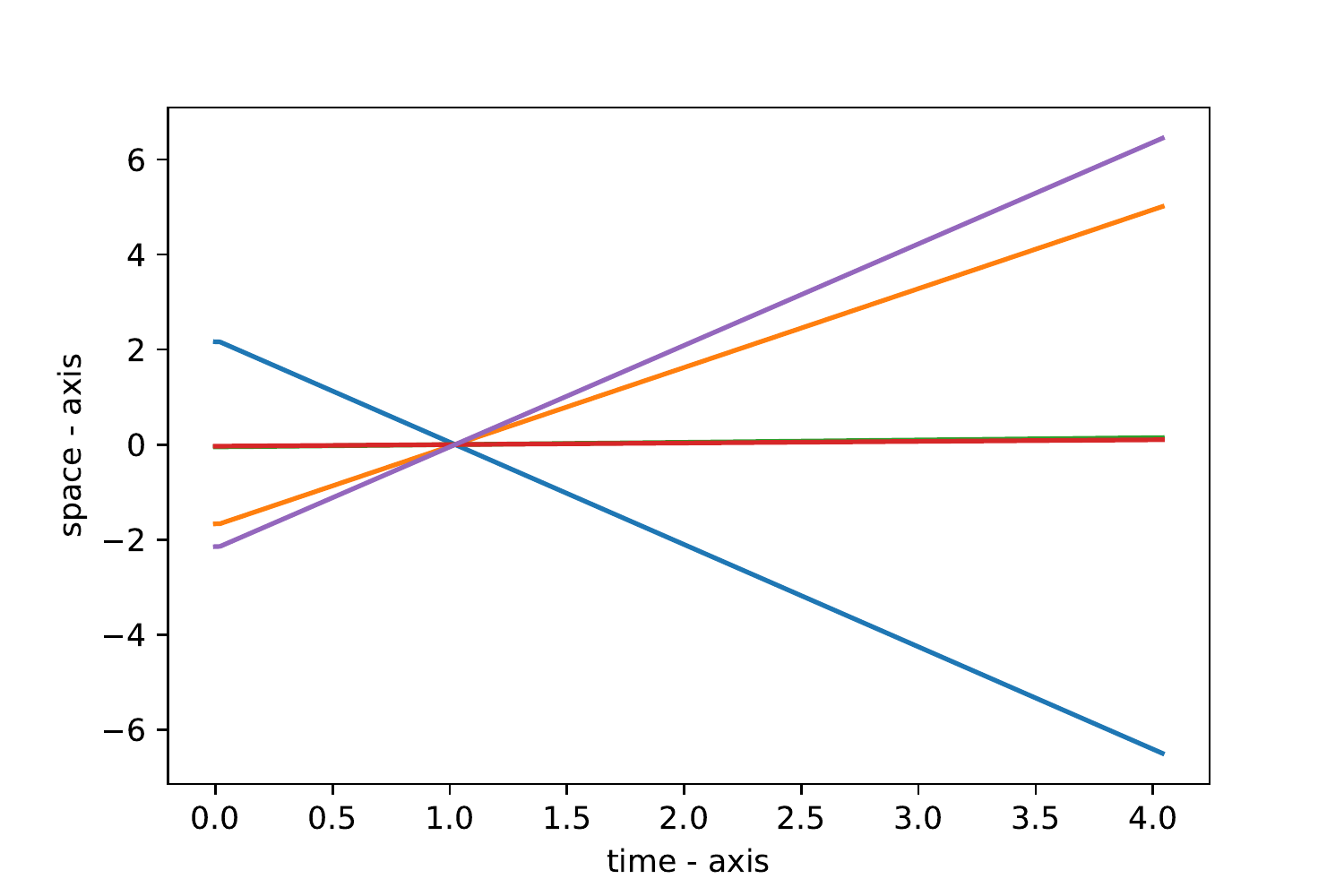}
        \caption{projection of trajectories in $x_1$ direction}
    \end{subfigure}%
    ~
    \begin{subfigure}{0.45\textwidth}
        \centering
        \includegraphics[width=0.95\linewidth]{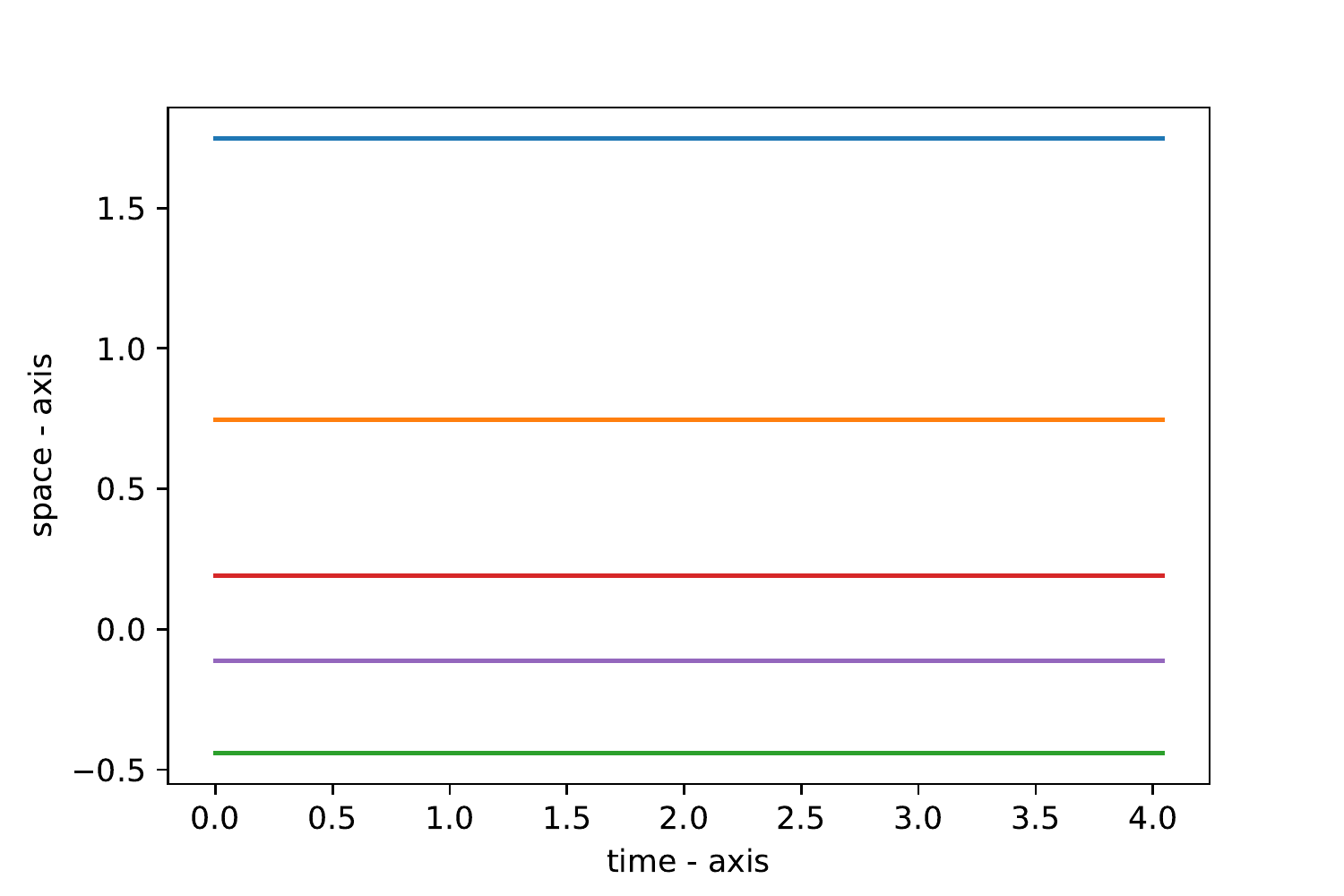}
        \caption{projection of trajectories in $x_2$ direction}
    \end{subfigure}
    \caption{Projection of trajectories with random initial positions. For each initial position $x=(x_1, x_2)$, its velocity $v=(-x_1, 0)$ is constant along time. As a result, the equation develops singularity at time $t=1$, and projection of $T_{\theta}(x)$ onto $x_1$-plane intersects for all points.}
    \label{geodesic traj plot}
\end{figure*}

\begin{figure*}[t!]
    \begin{subfigure}{0.32\textwidth}
        \centering
        \includegraphics[width=0.95\linewidth]{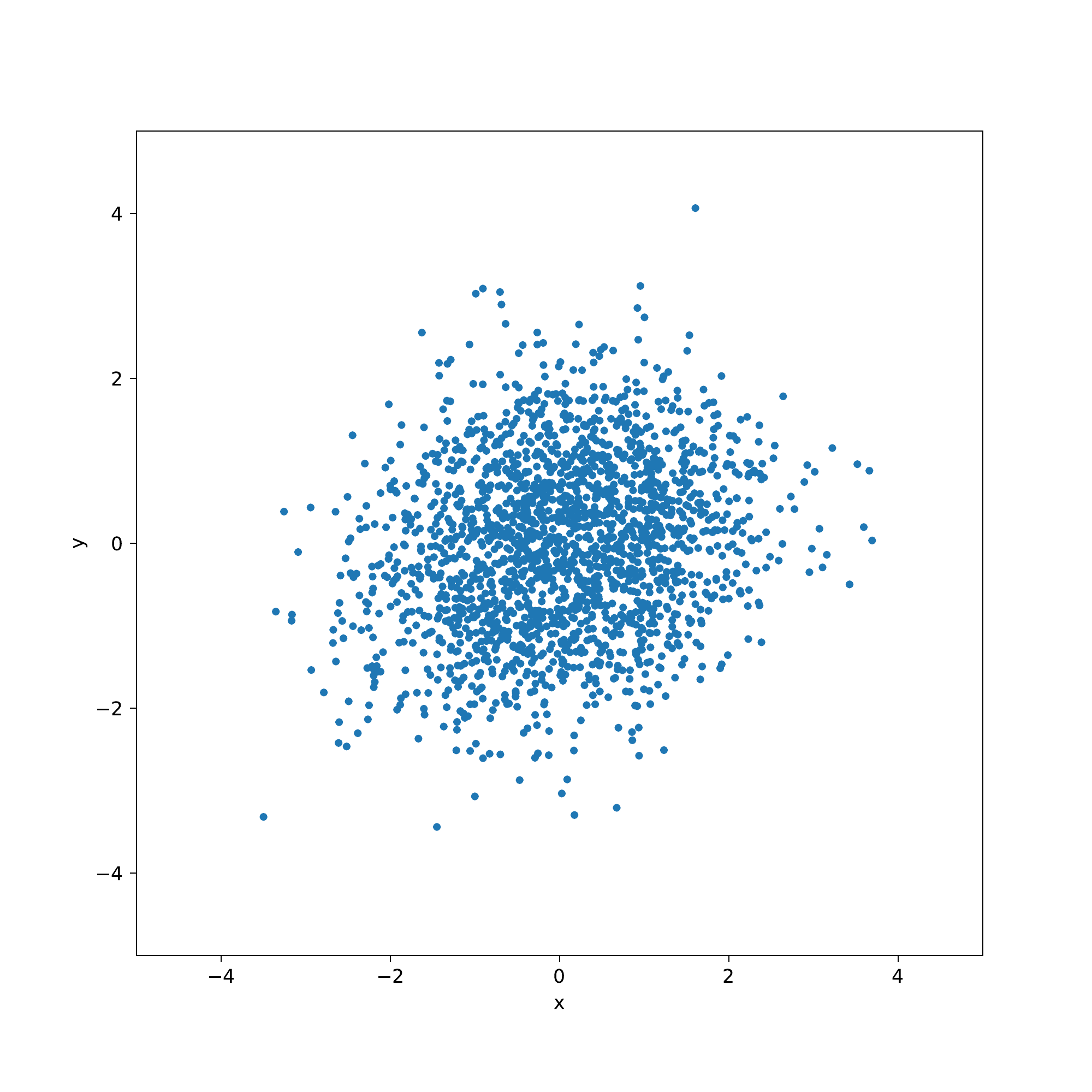}
        \caption{$t=0$}
    \end{subfigure}%
    ~
    \begin{subfigure}{0.32\textwidth}
        \centering
        \includegraphics[width=0.95\linewidth]{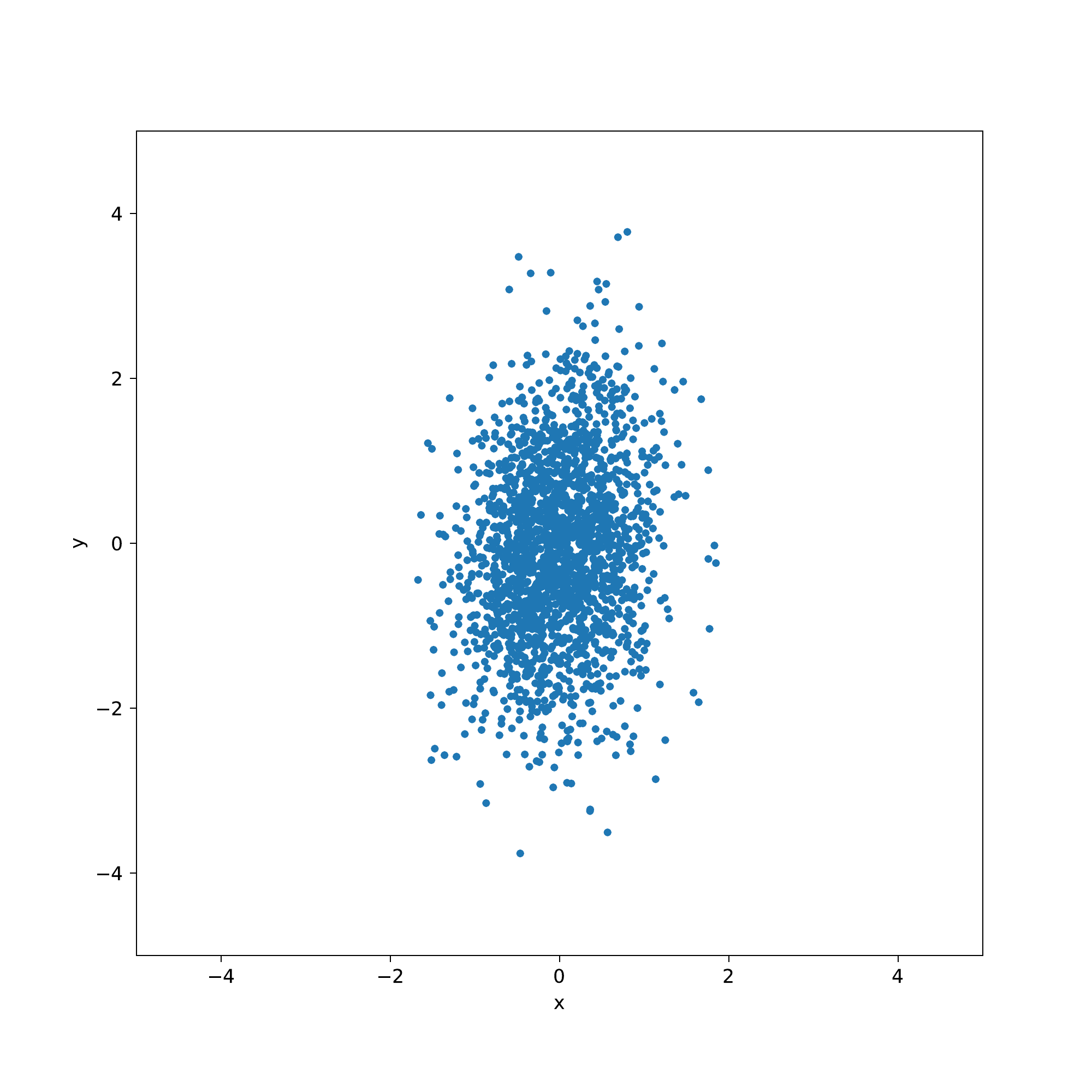}
        \caption{$t=0.5$}
    \end{subfigure}
    ~
    \begin{subfigure}{0.32\textwidth}
        \centering
        \includegraphics[width=0.95\linewidth]{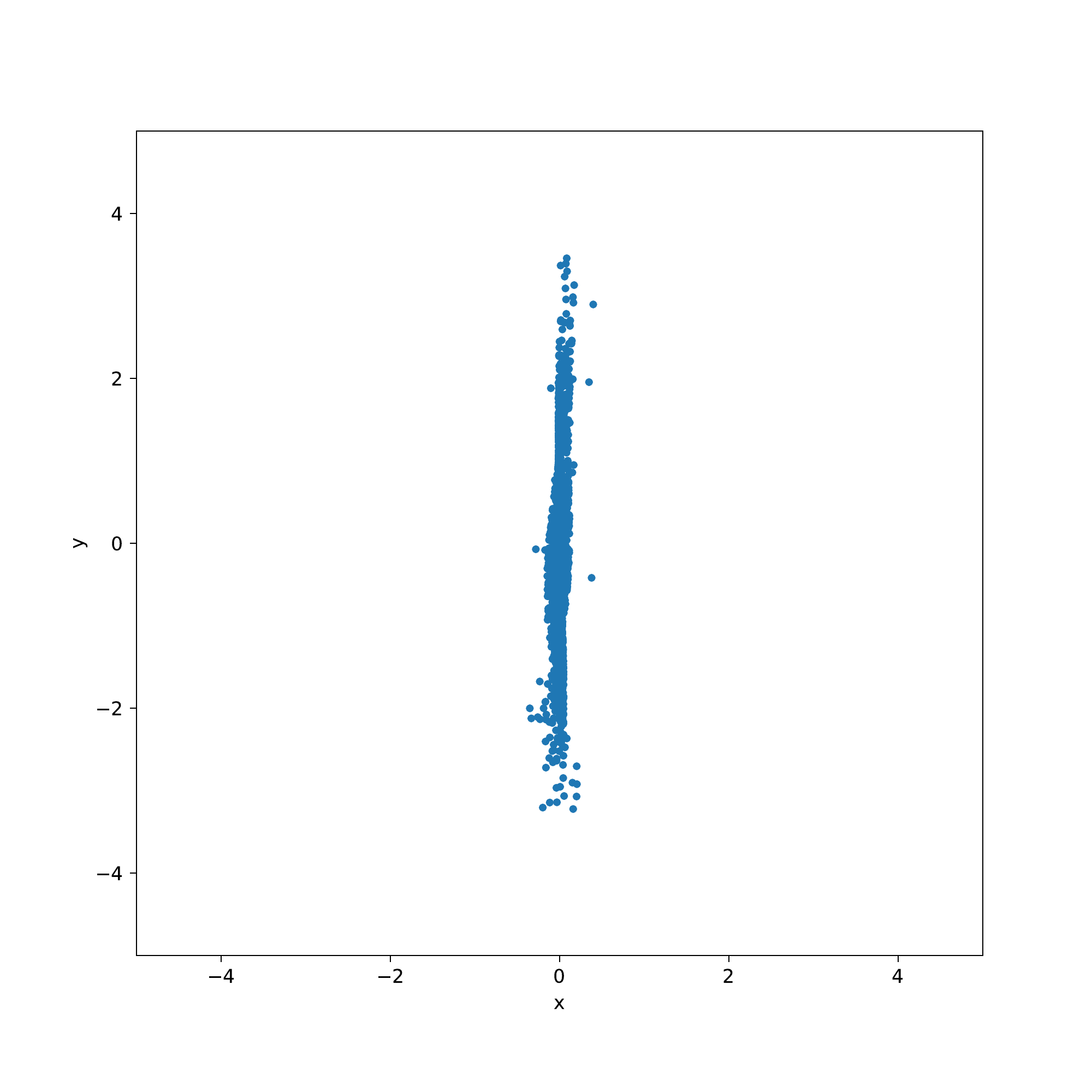}
        \caption{$t=1$}
    \end{subfigure}
    \\
    \begin{subfigure}{0.32\textwidth}
        \centering
        \includegraphics[width=0.95\linewidth]{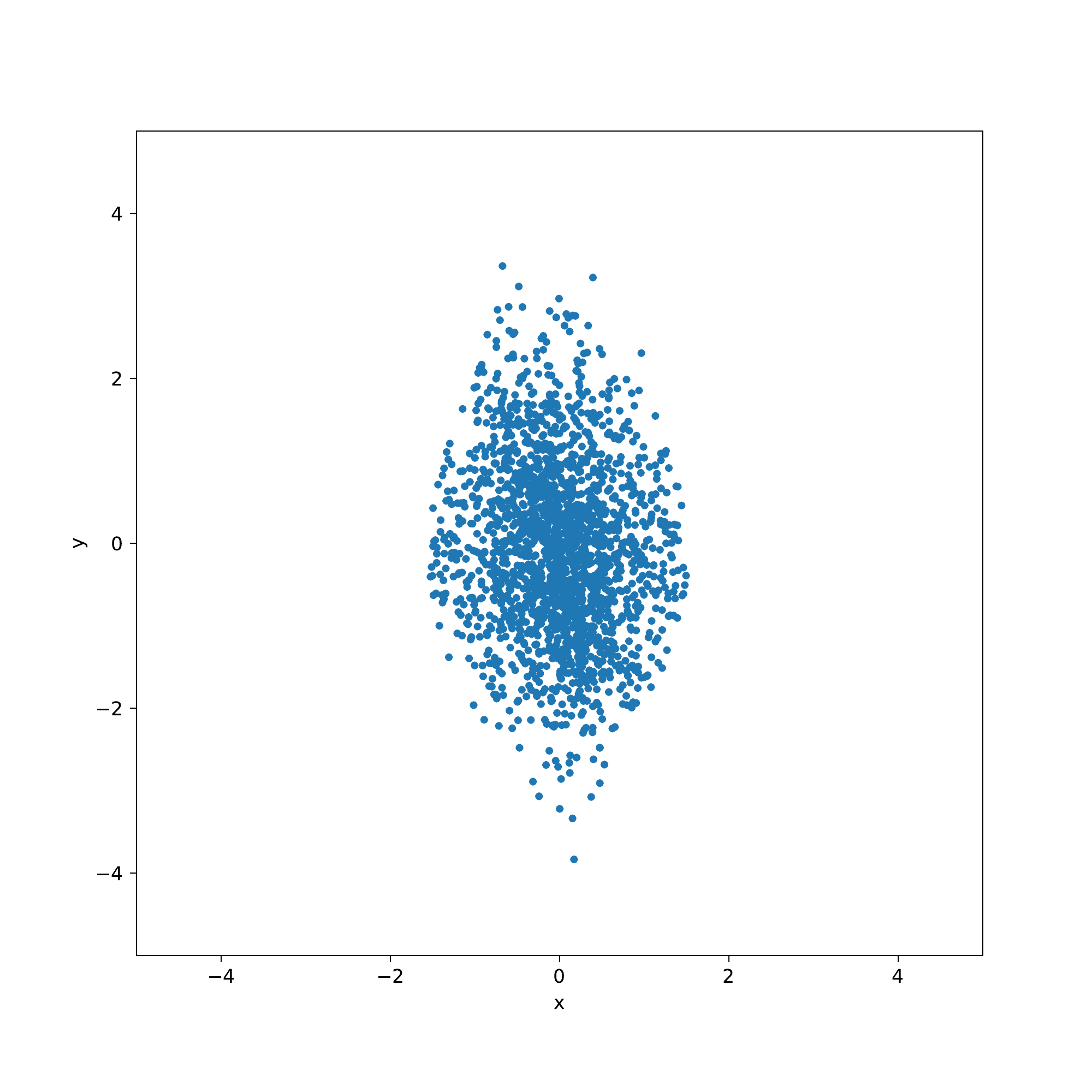}
        \caption{$t=1.5$}
    \end{subfigure}%
    ~
    \begin{subfigure}{0.32\textwidth}
        \centering
        \includegraphics[width=0.95\linewidth]{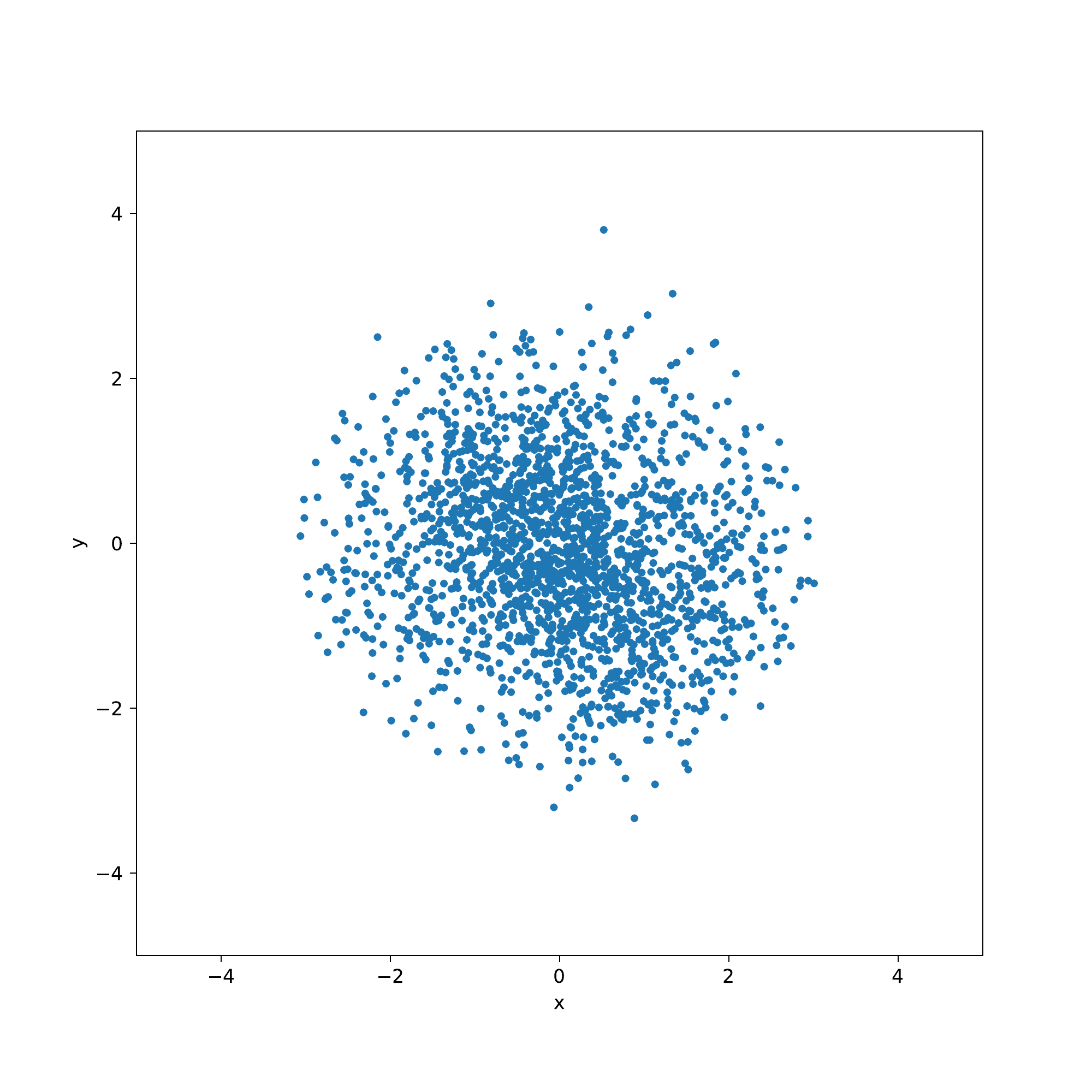}
        \caption{$t=2$}
    \end{subfigure}
    ~
    \begin{subfigure}{0.32\textwidth}
        \centering
        \includegraphics[width=0.95\linewidth]{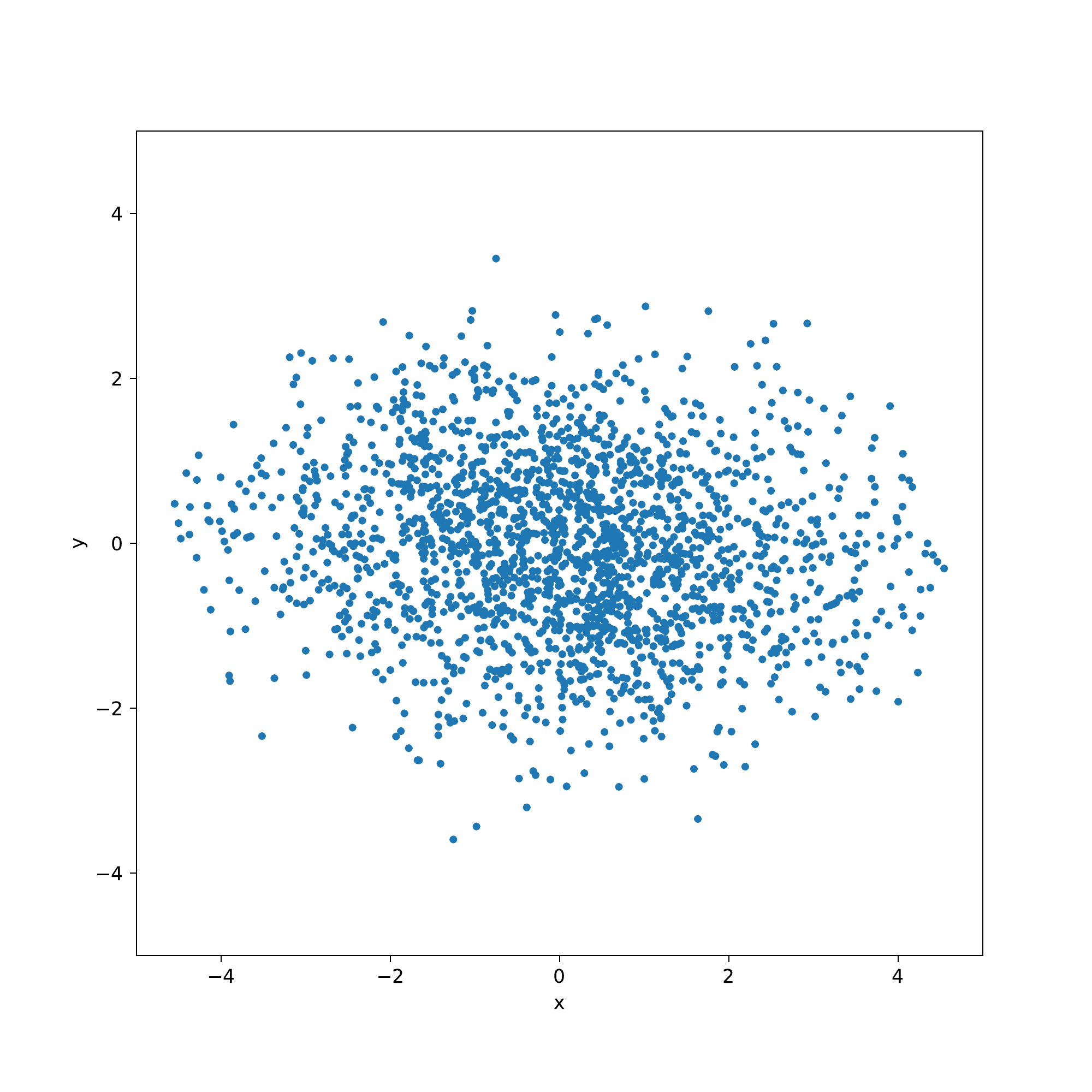}
        \caption{$t=2.5$}
    \end{subfigure}
    \caption{Sample plots of computed $\rho_{\theta}$ at different time $t$}
    \label{geodesic sampleplot}
\end{figure*}

\subsection{Quadratic potential} The second example is the WHF in which the potential is set to be quadratic function of the position variable, i.e., $\mathcal{F}(\rho)=\int_{\mathbb{R}^d} V(x)\rho(x)dx $ where $V$ is a quadratic function of $x$. The problem can be solved explicitly in this case as shown in Section \ref{exact para HO}, and the approximation error $\delta_0, \delta_1$ vanish if we choose affine transform as the push-forward map. We verify the symplectic property as well as the linear dependence between error and step size of our numerical scheme here. On the other side, the solution may display interesting structure for some special combination of $V(x)$ and $\Phi(0, x)$, which is the so called Lissajous curve. We demonstrate this case in section \ref{sec: quadratic2D affine map}.

In our experiments, the potential function $V$ and initial condition for dual variable $\Phi$ are taken as
\begin{equation}
    \label{eq:quad_potential_func}
    \begin{aligned}
    &V(x) = \sum_i\frac{1}{2}a_ix_i^2,\\
      &\Phi(0, x)= \sum_i\frac{1}{2}b_ix_i^2.
    \end{aligned}
\end{equation}
Here $a_i$ is a positive real number while $b_i$ can be negative. The index $i$ runs from $1$ to $d$. We can write the WHF as

\begin{align}
\label{HO equation}
    &\frac{\partial}{\partial t}\rho +\nabla \cdot (\rho\nabla \Phi)=0,\\
    &\frac{\partial }{\partial t}\Phi +\frac{1}{2}|\nabla\Phi|^2=-\sum_i\frac{1}{2}a_ix_i^2,\\
    &\rho(0, \cdot)=T_{\theta_0\sharp}\mathcal{N}(0, I),\quad  \Phi(0, x)= \sum_i\frac{1}{2}b_ix_i^2.
\end{align}

From the particle version of equations, we know that both the position and velocity can be expressed explicitly. In fact, the $i$-th component of solution is given as
\begin{equation}
    \label{eq:quad_potential_sol}
    \begin{aligned}
    X_i(t, x)=\sqrt{1+b_i^2}\cdot x_i\cdot \cos(\sqrt{a_i}\cdot t-\arctan(b_i)),
    \end{aligned}
\end{equation}
where $x=(x_1, \cdots, x_d)\in \mathbb{R}^d$. The true push-forward map is
\begin{align*}
    T_t(x)=A(t)x, \quad A(t)=\mathrm{diag}([\sqrt{1+b_i^2} \cos(\sqrt{a_i}\cdot t-\arctan(b_i))]_{i=1}^{d}).
\end{align*}
We can see that $a_i$ determines the frequency, while $b_i$ determines the amplitude and initial phase.

\subsubsection{2D case with affine transform as the push-forward map}
\label{sec: quadratic2D affine map}

We first test the 2D harmonic oscillator example with potential energy $\mathcal{F}(\rho)=\int_{\mathbb{R}^2} \frac{1}{2}(2.25x_1^2+0.36x_2^2)\rho(x) dx$ and initial dual function $\Phi(0, x)=-\frac{1}{2}x_1^2$ as in section \ref{exact para HO}. We choose affine transform as the push-forward map, i.e., 
\begin{align*}
    T_{\theta}(x)=\Gamma z+b, \theta=(\Gamma, b), \Gamma \in \mathbb{R}^{2\times 2}, b\in \mathbb{R}^2.
\end{align*}

We pick up random initial position $x_0$, compare the trajectory $\{T_{\theta}(x):t\in [0, 20]\}$ from our solution with the true trajectory $\{T_{t}(x):t\in [0, 20]\}$ in Fig \ref{fig:2d HO demo }.
\begin{figure}[H]
    \centering
    \includegraphics[width=0.5\linewidth]{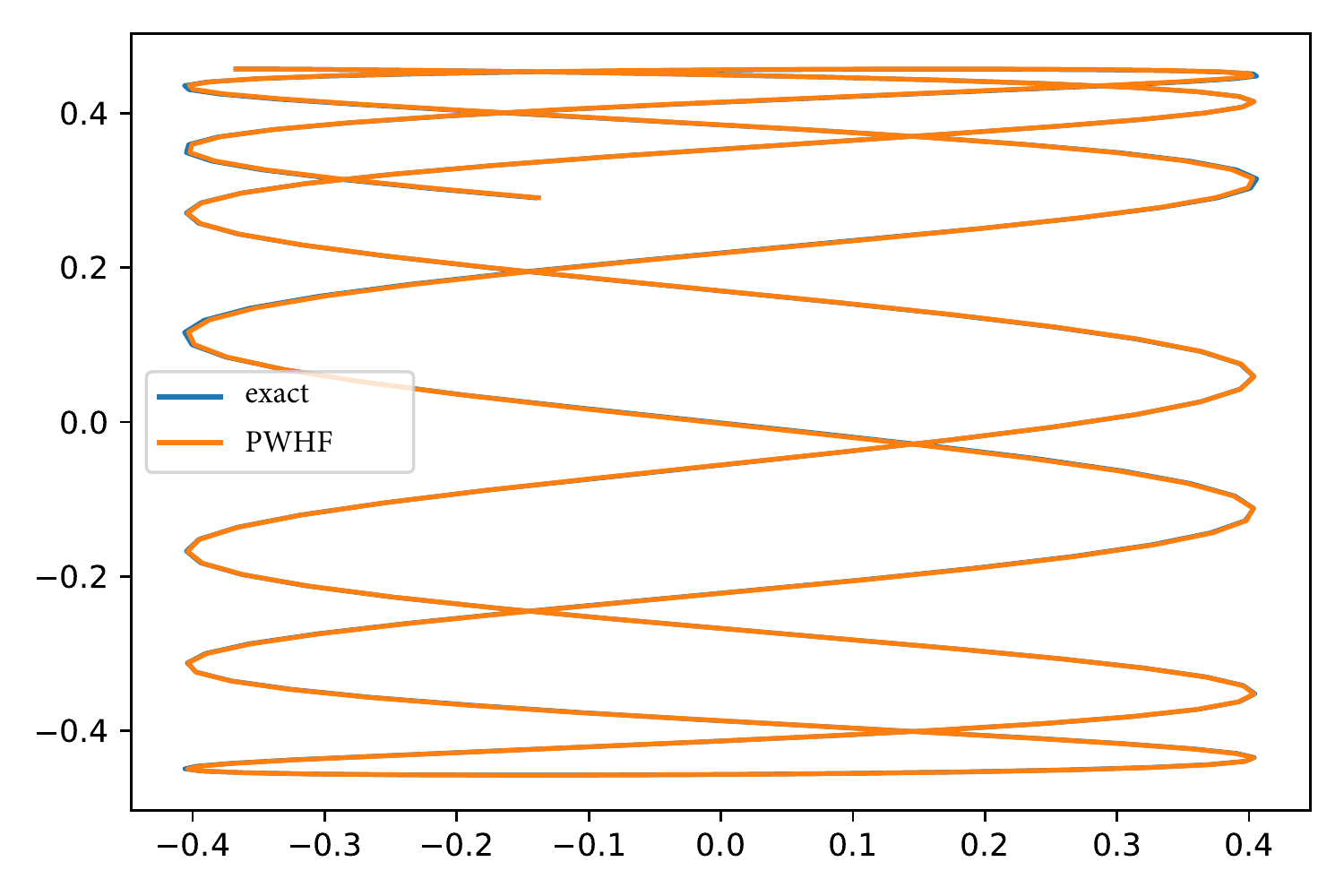}
    \caption{Trajectory of randomly picked initial point}
    \label{fig:2d HO demo }
\end{figure}

In this case, the true push-forward map lies in the space of parameterized functions $\{T_{\theta}\}_{\theta \in \Theta}$. In fact, $\theta=(A(t), \vec{0})$ gives $T_{\theta}\equiv T_t$, hence the approximation error $\delta_0, \delta_1$ equal $0$. Since we know the true solution, We define the error as
$$\widehat{\epsilon}= \max_{l\in [0, \frac{20}{h}]}\frac{1}{N_{\theta}}\sum_{i=1}^{N_{\theta}}|T_{\theta(lh)}(z_i)-T_{lh}(z_i)|. $$
We check the linear dependence between the error $\widehat{\epsilon}$ and step size $h$ in Figure \ref{err vs dt}, which confirms our theoretical estimate. 

\begin{figure}[!htb]
\centering
\minipage{0.45\textwidth}
\centering
    \includegraphics[width=0.95\linewidth]{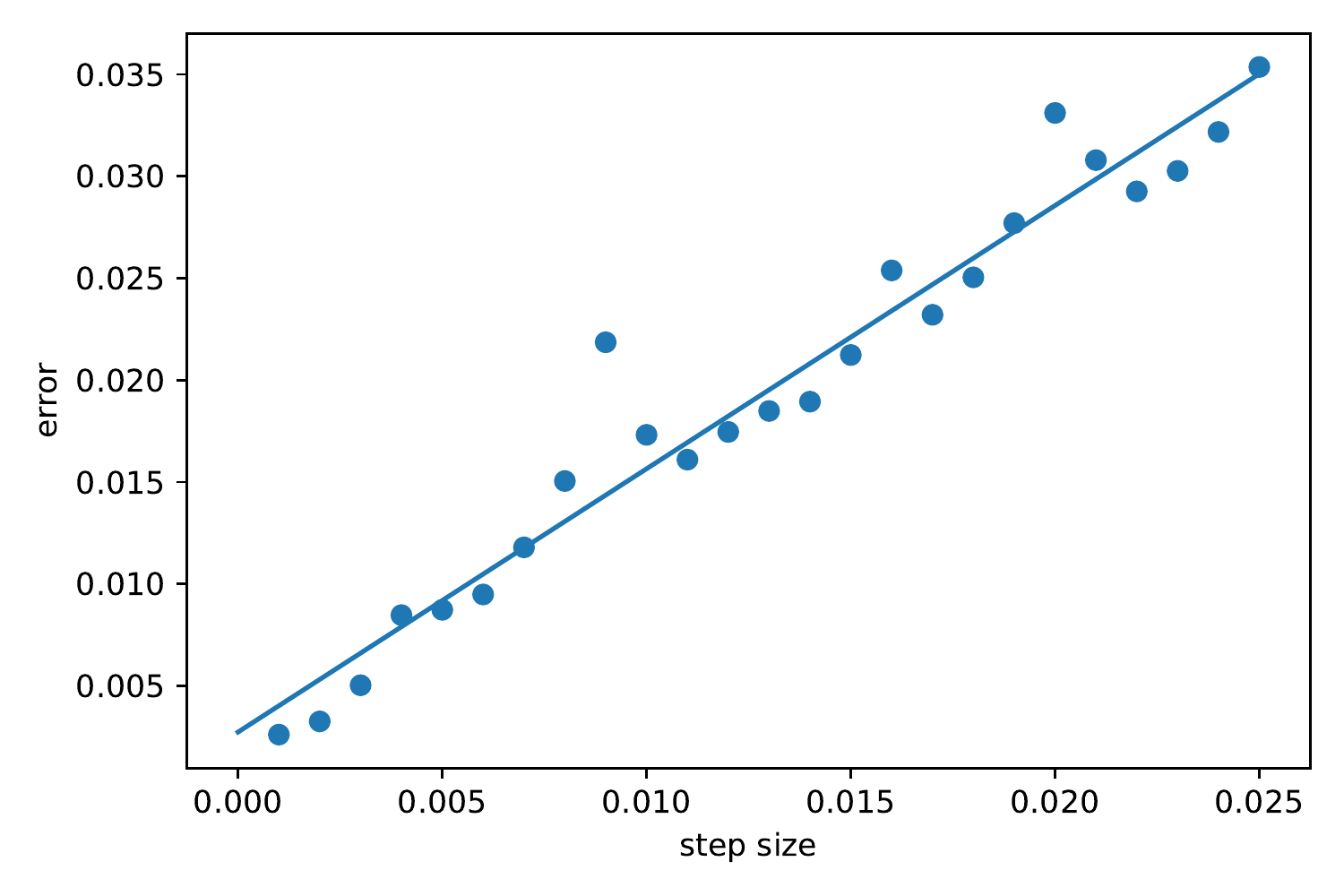}
    \caption{Error versus stepsize. We vary the stepsize from $0.001$ to $0.025$ and evaluate the error. The blue line is the line of best fit though linear regression on the data points.}
    \label{err vs dt}
\endminipage\hfill
\minipage{0.45\textwidth}
\centering
    \includegraphics[width=0.95\linewidth]{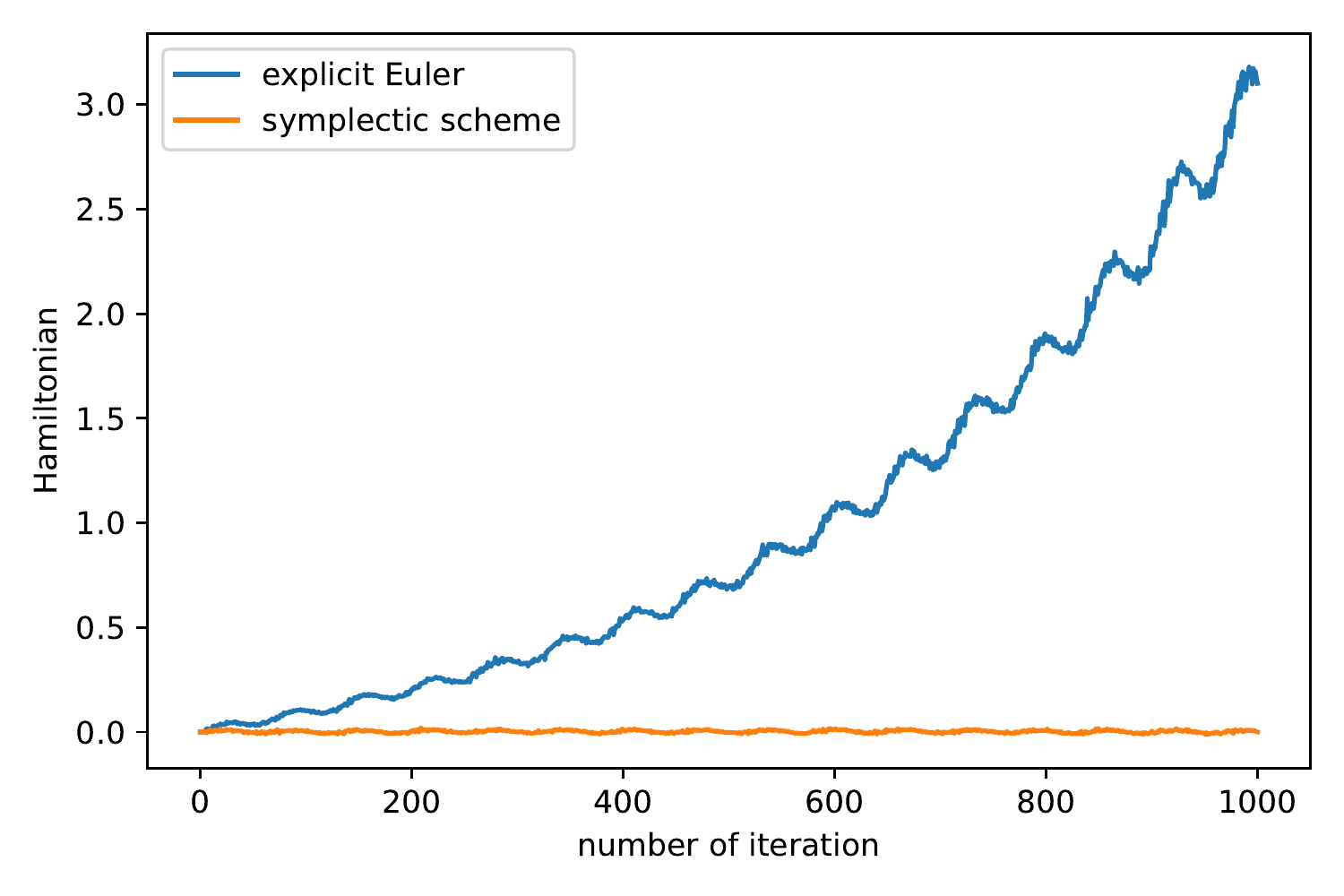}
    \caption{Conservation of Hamiltonian. The orange curve is obtained by using the proposed symplectic scheme. The blue curve is computed by forward Euler scheme.}
    \label{quadratic cons H}
\endminipage
\end{figure}

We also verify the symplectic preservation of our numerical scheme in Figure \ref{quadratic cons H}. We run Algorithm \ref{alg:HFsolver} and plot the change of Hamiltionian $\Delta H(\theta_k, p_k)$ as the orange curve. In comparison, we replace the symplectic step by a forward Euler step, and run the experiments, with corresponding Hamiltonian as the blue curve. It is clear that the proposed symplectic scheme preserves the Hamiltonian while the forward Euler scheme does not.

\subsubsection{2D case with Lissajous curve}
For a single point $x=(x_1, x_2)\in \mathbb{R}^2$, its motion under the Hamiltonian flow (\ref{HO equation}) is a $2$-dimensional harmonic oscillator. Denote $\delta \beta_0=\arctan(b_1) - \arctan(b_2)$ as the initial phase difference and $r=\sqrt{\frac{a_1}{a_2}}$ as the ratio of frequency for two components, it's well-known that we may have interesting periodic patterns known as Lissajous curve for some combination of $(\delta\beta_0, r)$. Numerical experiments show that our method can capture this structure as well. In fact, we run our algorithm for several different combination of $(\delta\beta_0, r)$, choose a suitable initial position, and plot its trajectory under the map $T_{\theta}$ in Figure \ref{2d lissajous}. To show the full structure of Lissajous curve, we run $20000$ iterations which corresponds to physical time $t_1=40$.

In this part, We use neural network as push-forward map where $T_{\theta}$ is same as in section \ref{geodesic sec} and set numerical step size $h=0.002$ in the computation.

\begin{figure*}[t!]
    \begin{subfigure}{0.3\textwidth}
        \centering
        \includegraphics[width=0.95\linewidth]{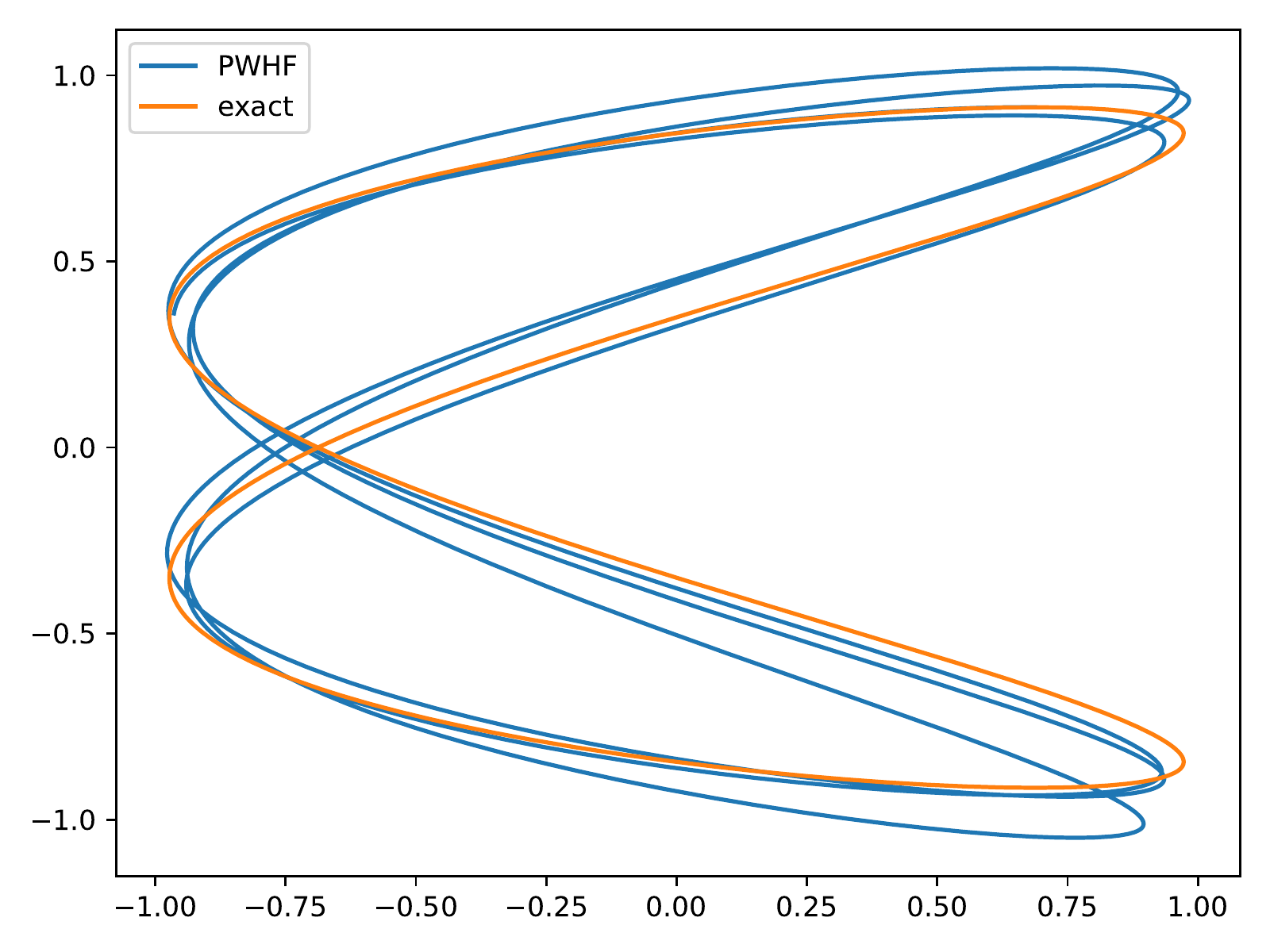}
        \caption{$\delta\beta_0=\frac{\pi}{2}, r = \frac{1}{2}$}
    \end{subfigure}%
    ~
    \begin{subfigure}{0.3\textwidth}
        \centering
        \includegraphics[width=0.95\linewidth]{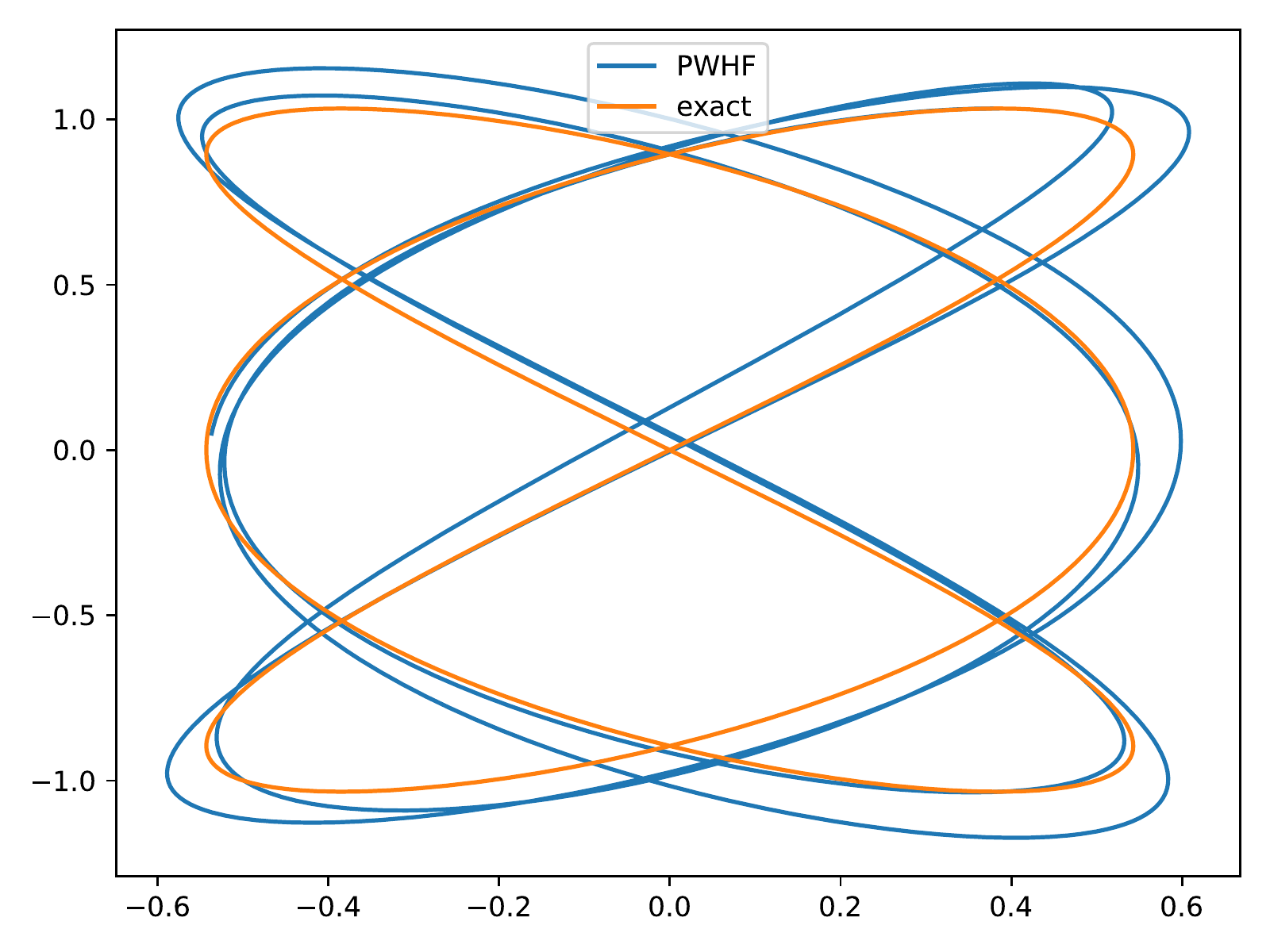}
        \caption{$\delta\beta_0=\frac{\pi}{2}, r = \frac{2}{3}$}
    \end{subfigure}
    ~
    \begin{subfigure}{0.3\textwidth}
        \centering
        \includegraphics[width=0.95\linewidth]{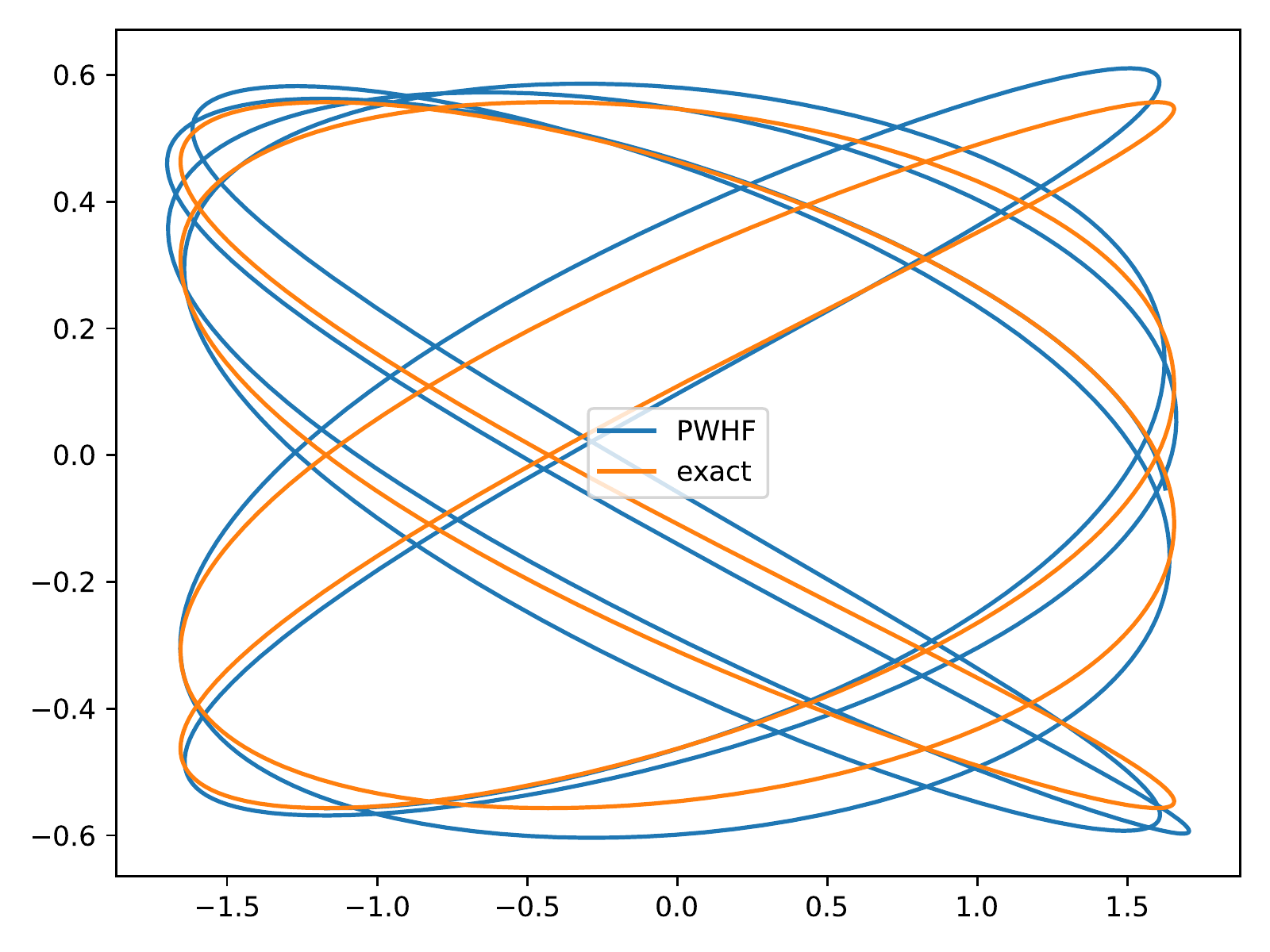}
        \caption{$\delta\beta_0=\frac{\pi}{2}, r = \frac{3}{4}$}
    \end{subfigure}
    \caption{Demonstration of Lissajous curve. (a)-(c) are trajectory plots which show the Lissajous pattern.}
    \label{2d lissajous}
\end{figure*}

We verify the velocity from our method in this subsection. We set 
\begin{align}
\mathcal{F}(\rho)=\int_{\mathbb{R}^{2}}\left( \frac{1}{2}x_1^2+\frac{1}{3}x_2^2 \right)\rho(x)~dx,\quad \Phi(0, x)=-\frac{1}{2}x_1^2.   
\end{align}

For a fixed initial point $x = T_{\theta_0}(z)$ where $z$ is from the reference distribution, its true velocity at time $t$ is $v(t, x)=\frac{d}{dt}X(t, x)$ where $X(t, x)$ is given in equation (\ref{eq:quad_potential_sol}), and the velocity from our model can be evaluated through $\Tilde{v}(t, x)=\frac{\partial}{\partial \theta}T_{\theta}(z)\cdot \dot{\theta}_t$. We choose $20$ initial points, plot their positions as well as the velocities on the plane as in Figure \ref{HO velocity plot}. The start point of each arrow represents the position and arrow itself represents its velocity. As shown in the figure, the true trajectories for different initial positions may intersect, which leads to the singularity of density function $\rho_t$. The velocity $\Tilde{v}$ from our model is still close to the true velocity despite the singularity.

\begin{figure*}[!htb]
    \begin{subfigure}{0.3\textwidth}
        \centering
        \includegraphics[width=0.95\linewidth]{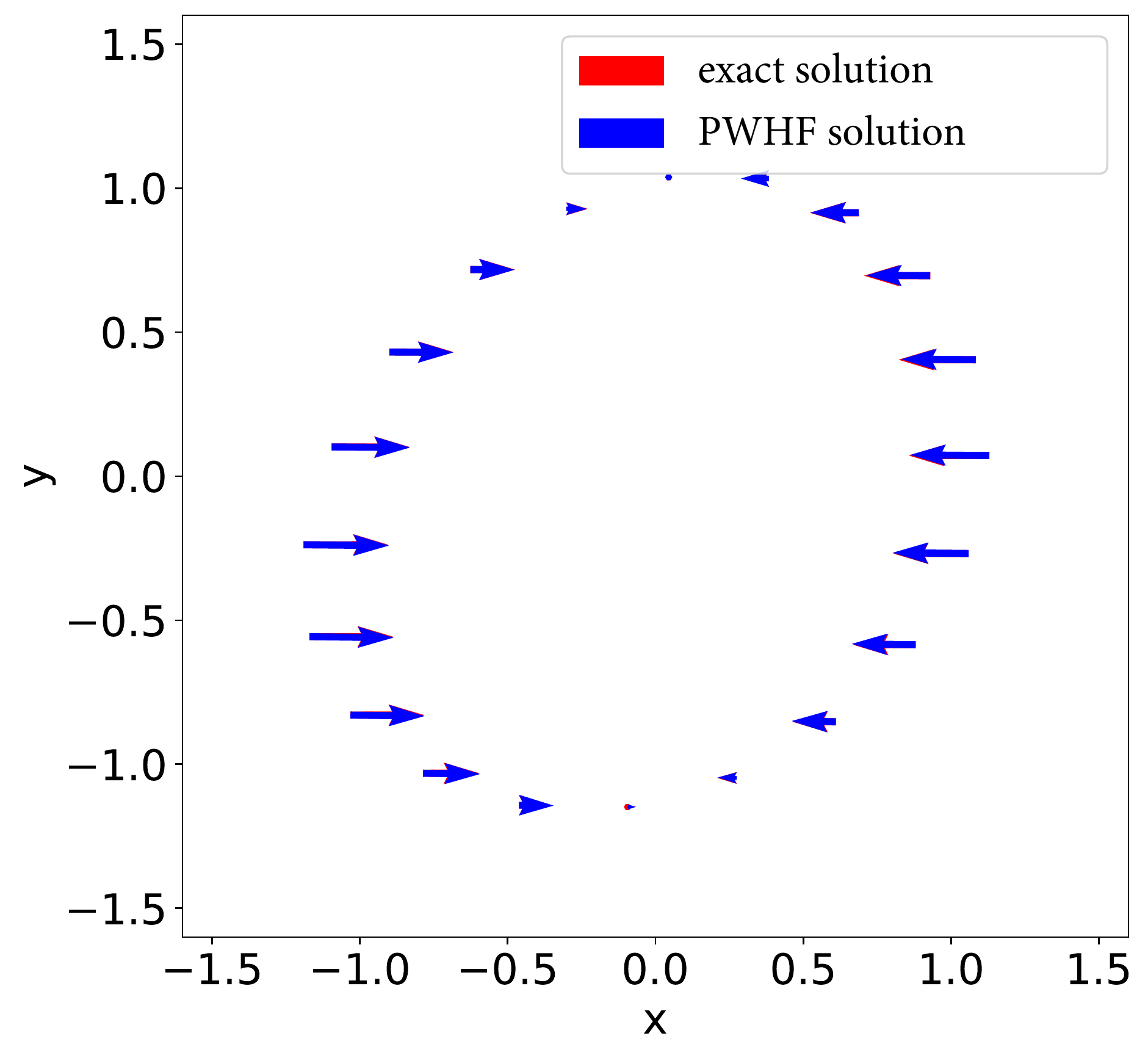}
        \caption{$t=0$}
    \end{subfigure}%
    ~
    \begin{subfigure}{0.3\textwidth}
        \centering
        \includegraphics[width=0.95\linewidth]{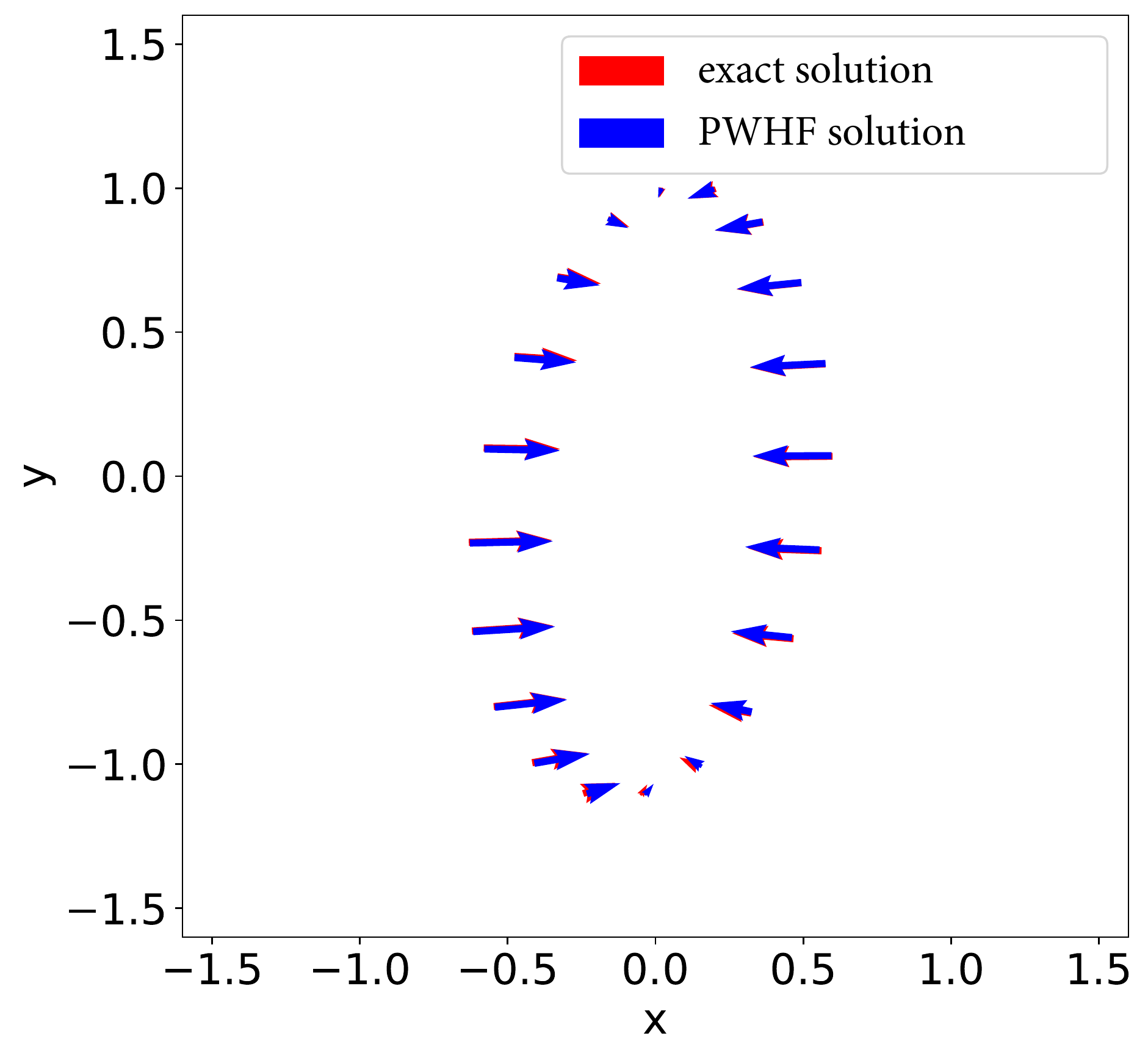}
        \caption{$t=0.4$}
    \end{subfigure}
    ~
    \begin{subfigure}{0.3\textwidth}
        \centering
        \includegraphics[width=0.95\linewidth]{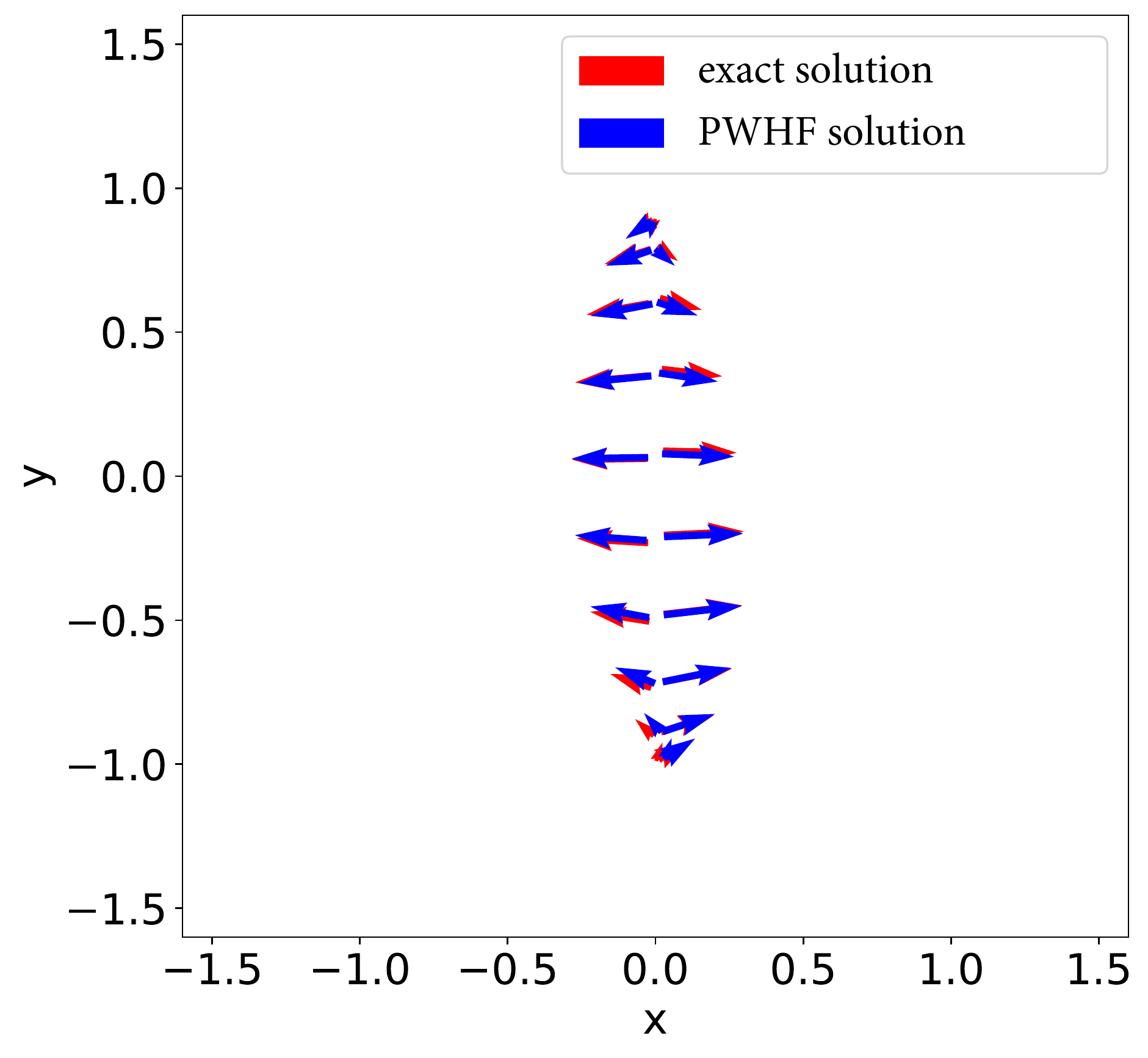}
        \caption{$t=0.8$}
    \end{subfigure}
    \\
    \begin{subfigure}{0.3\textwidth}
        \centering
        \includegraphics[width=0.95\linewidth]{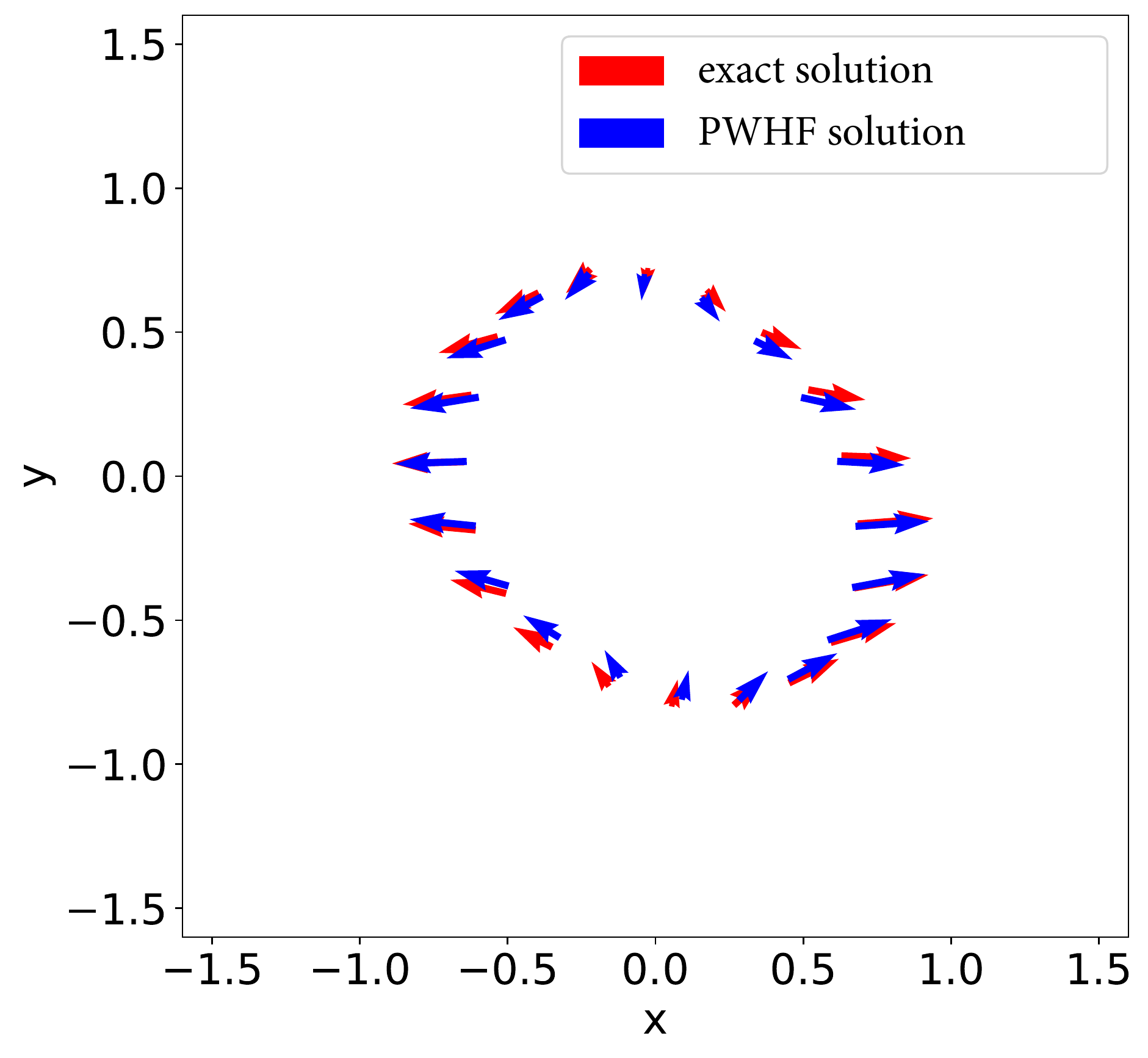}
        \caption{$t=1.2$}
    \end{subfigure}%
    ~
    \begin{subfigure}{0.3\textwidth}
        \centering
        \includegraphics[width=0.95\linewidth]{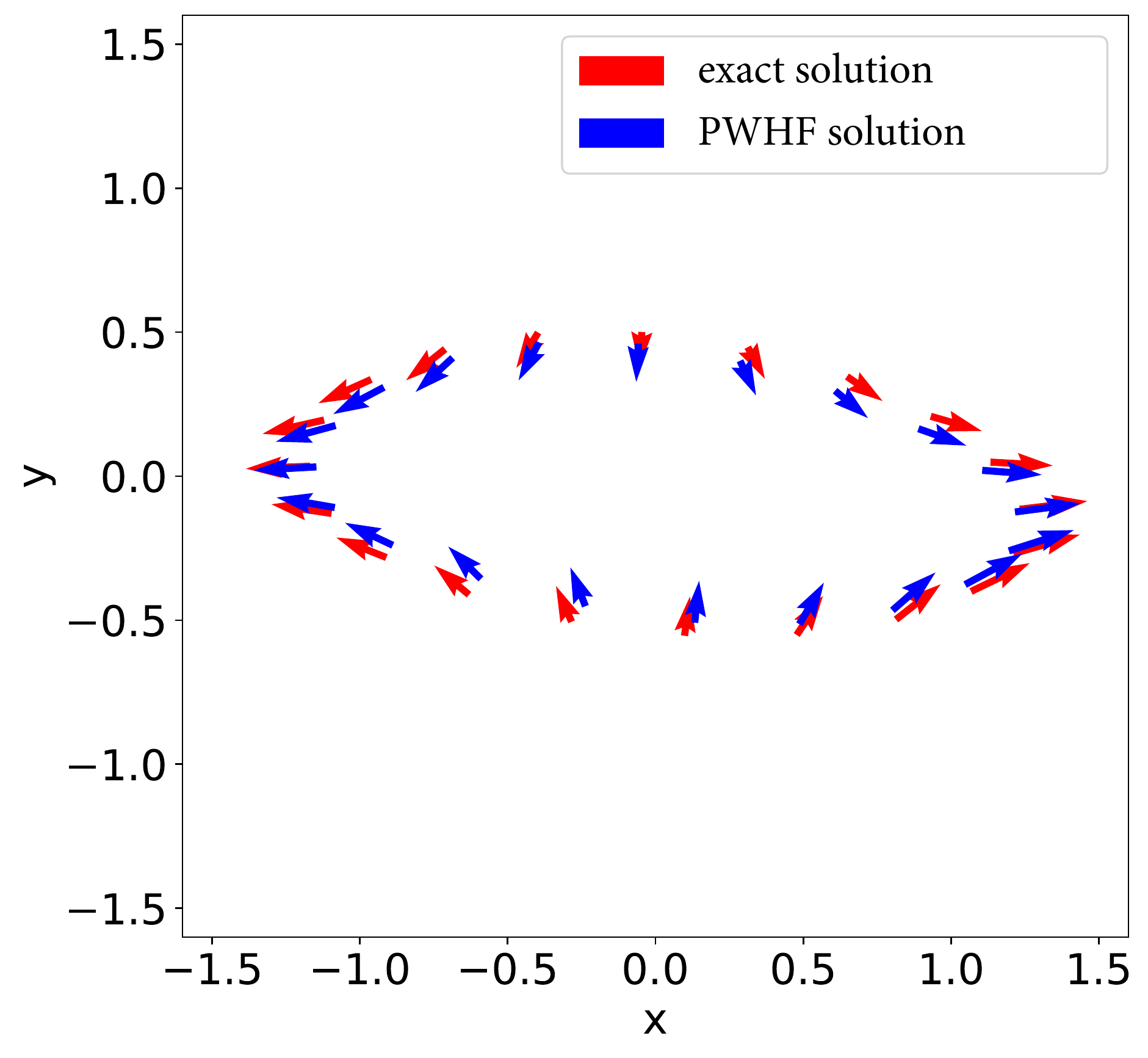}
        \caption{$t=1.6$}
    \end{subfigure}
    ~
    \begin{subfigure}{0.3\textwidth}
        \centering
        \includegraphics[width=0.95\linewidth]{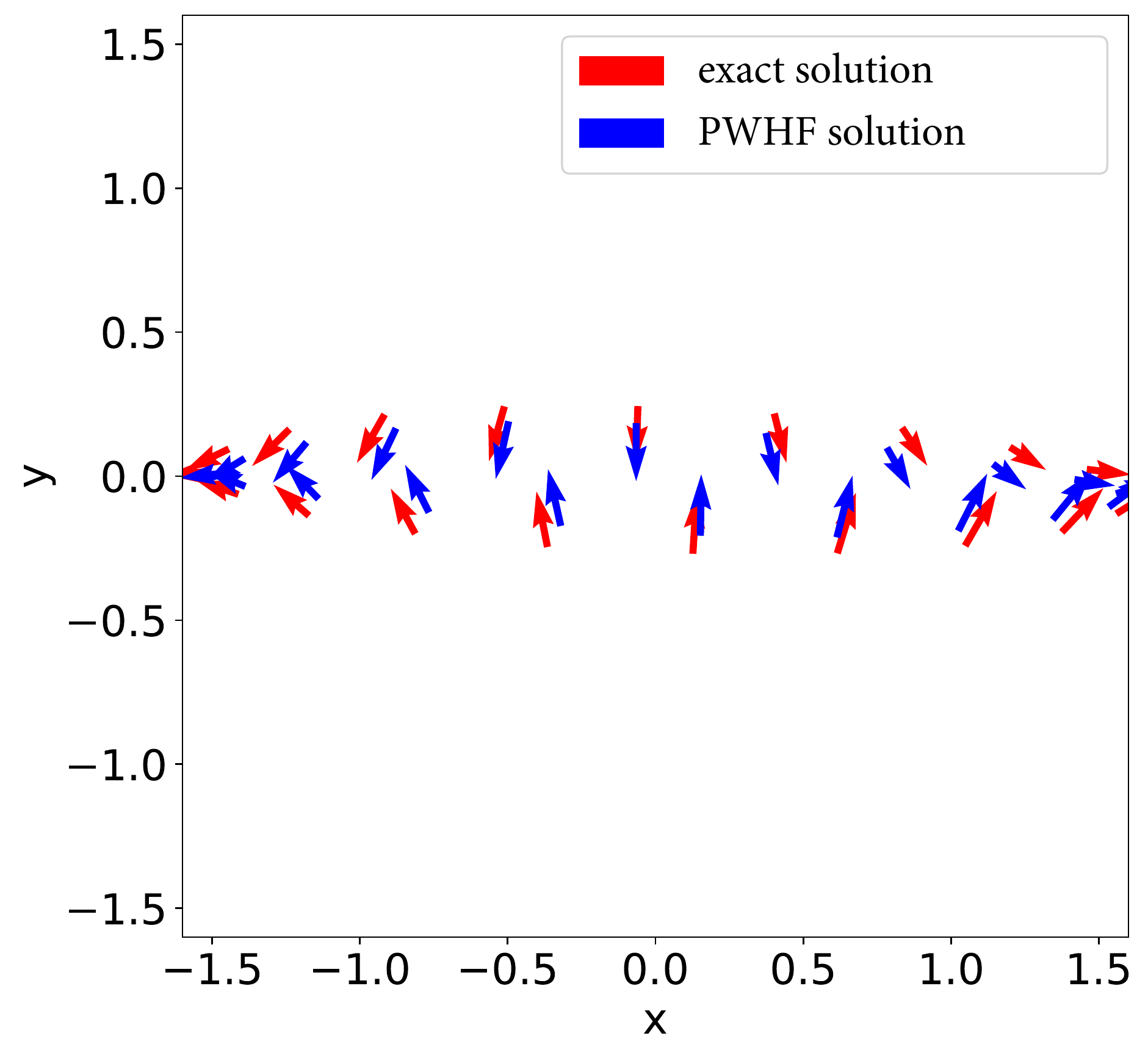}
        \caption{$t=2$}
    \end{subfigure}
    \caption{Evolution of positions and corresponding velocity}
    \label{HO velocity plot}
\end{figure*}

\subsubsection{10-D harmonic oscillator problem}
Traditional numerical schemes suffer from the curse of the dimensionality, namely the computational cost grows exponentially as the dimension increases, which makes solving high-dimensional PDEs numerically extremely expensive. On the contrary, our method is sampling based and can handle the challenges incurred from high dimensionality. To demonstrate it, we test our algorithm on  the $10$-dimensional harmonic oscillator problem with
\begin{align*}
    \mathcal{F}(\rho)=\int_{\mathbb{R}^{10}}\left(\frac{3}{8}x_1^2+\frac{1}{2}\sum_{i=2}^{10}x_i^2 \right)\rho(x)~dx,\quad \Phi(0, x)=\frac{1}{2}\sum_{i=2}^{10}x_i^2.
\end{align*}

We use the residual neural network as in (\ref{residual nn}) with $80$ neurons in each hidden layer. We solve the system on a time period $[0, 2\pi]$ with step size $h=0.001$.

We generate $10000$ samples from $\rho_{\theta}$, plot the samples histograms (orange) projected to the second dimension and compare it with true density function $\rho$ in Figure \ref{quad 10d histplot}. Here the projection of $\rho$ onto the second dimension is a Gaussian distribution whose variance is a $cosine$ function of time, depicted by the blue curve in the figure. Similar to the geodesic equation example, $\rho$ develops finite time singularity in this harmonic oscillator case, while numerical results show that our algorithm solves the problem well despite the singularity and dimension. We plot the empirical Hamiltonian from these samples in Figure \ref{10d Hamiltonian}. We emphasize that the scale used in Figure \ref{10d Hamiltonian} is concentrated around $9.35$ with small variation. This indicates that the Hamiltonian is essentially preserved while the kinetic and potential energy oscillate in the opposite phase in the computation as shown in Figures  \ref{10d KE} and \ref{10d PE} respectively.

\begin{figure*}[t!]
    \begin{subfigure}{0.24\textwidth}
        \centering
        \includegraphics[width=0.95\linewidth]{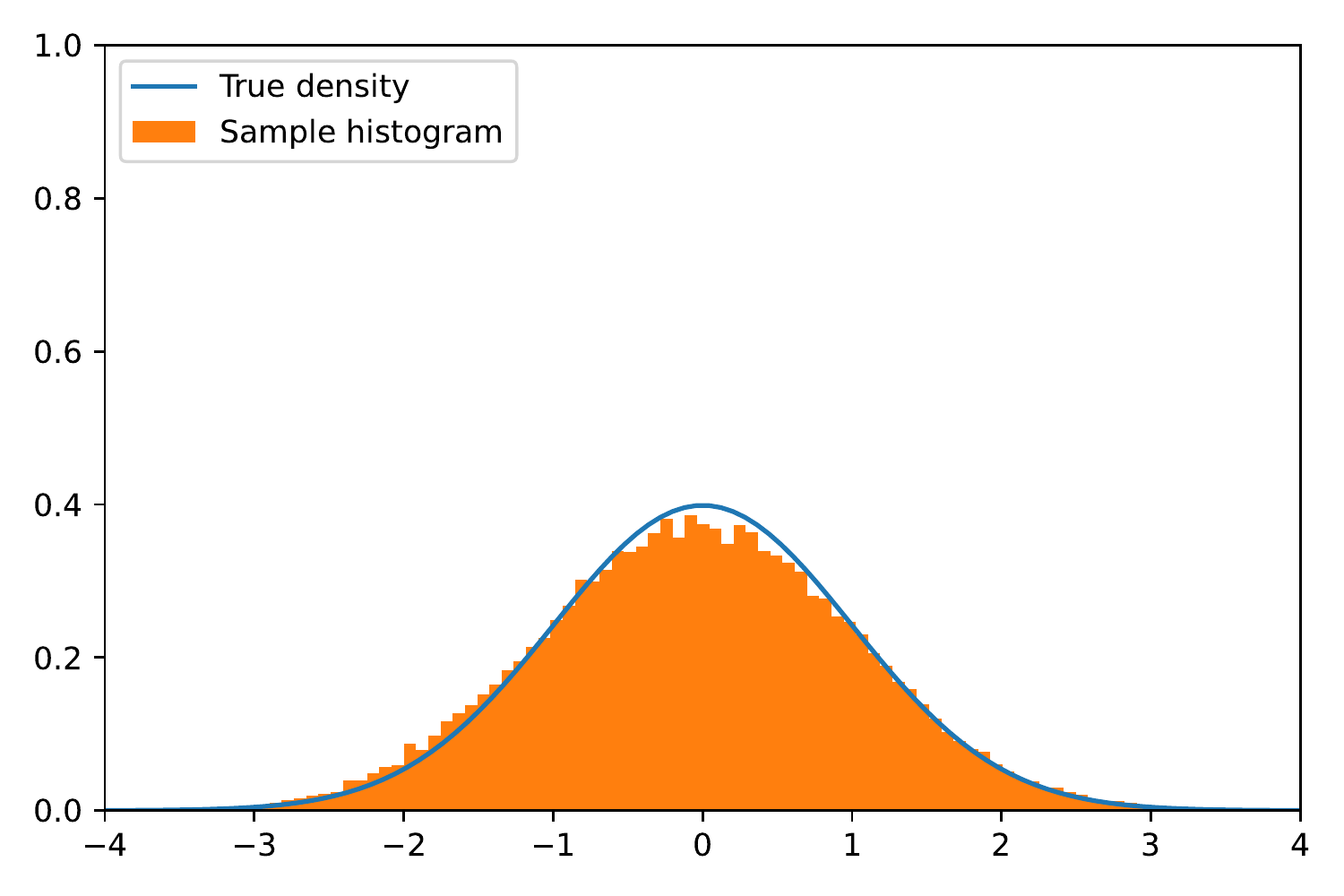}
        \caption{$t=0.$}
    \end{subfigure}%
    \begin{subfigure}{0.24\textwidth}
        \centering
        \includegraphics[width=0.95\linewidth]{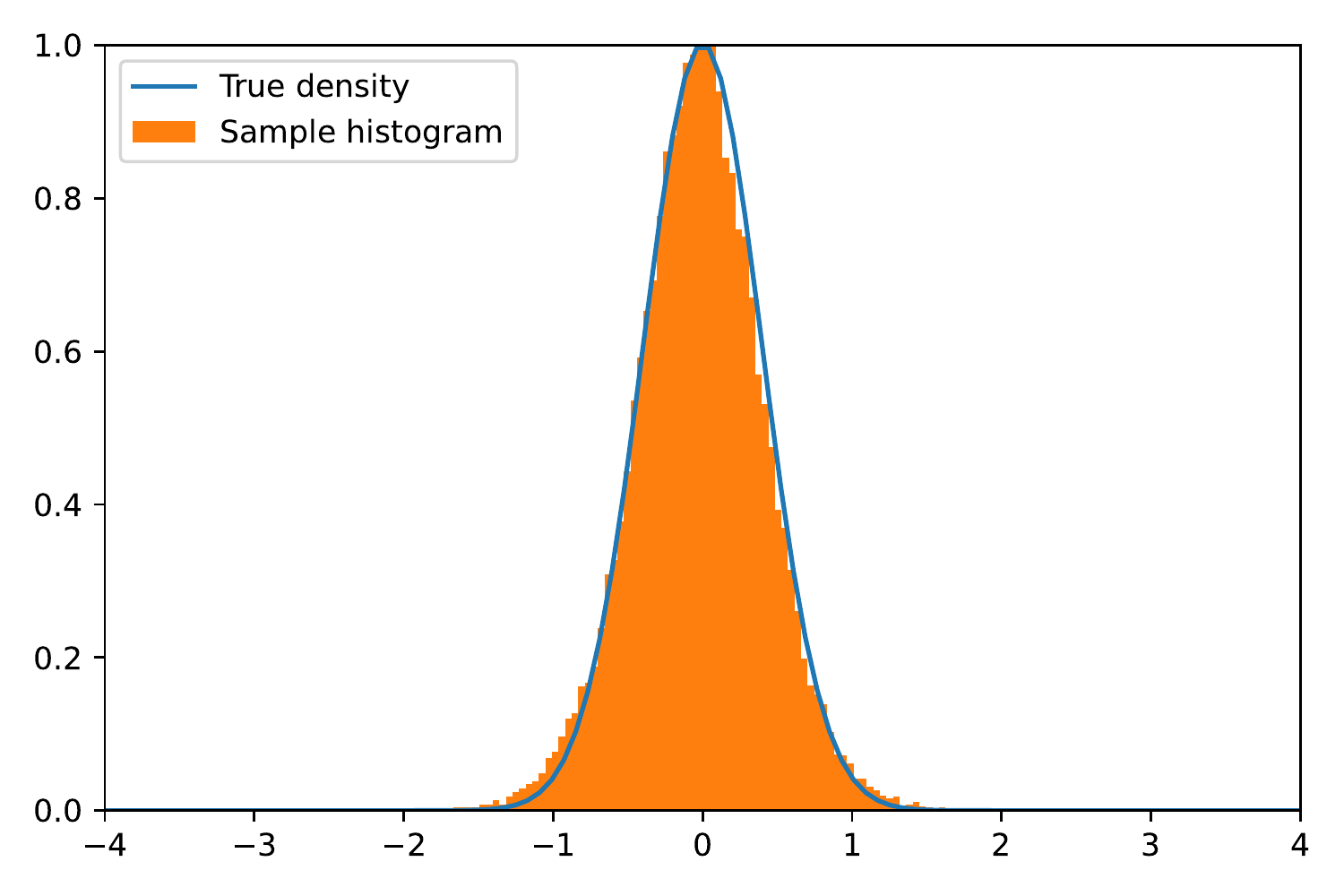}
        \caption{$t=0.5$}
    \end{subfigure}
    \begin{subfigure}{0.24\textwidth}
        \centering
        \includegraphics[width=0.95\linewidth]{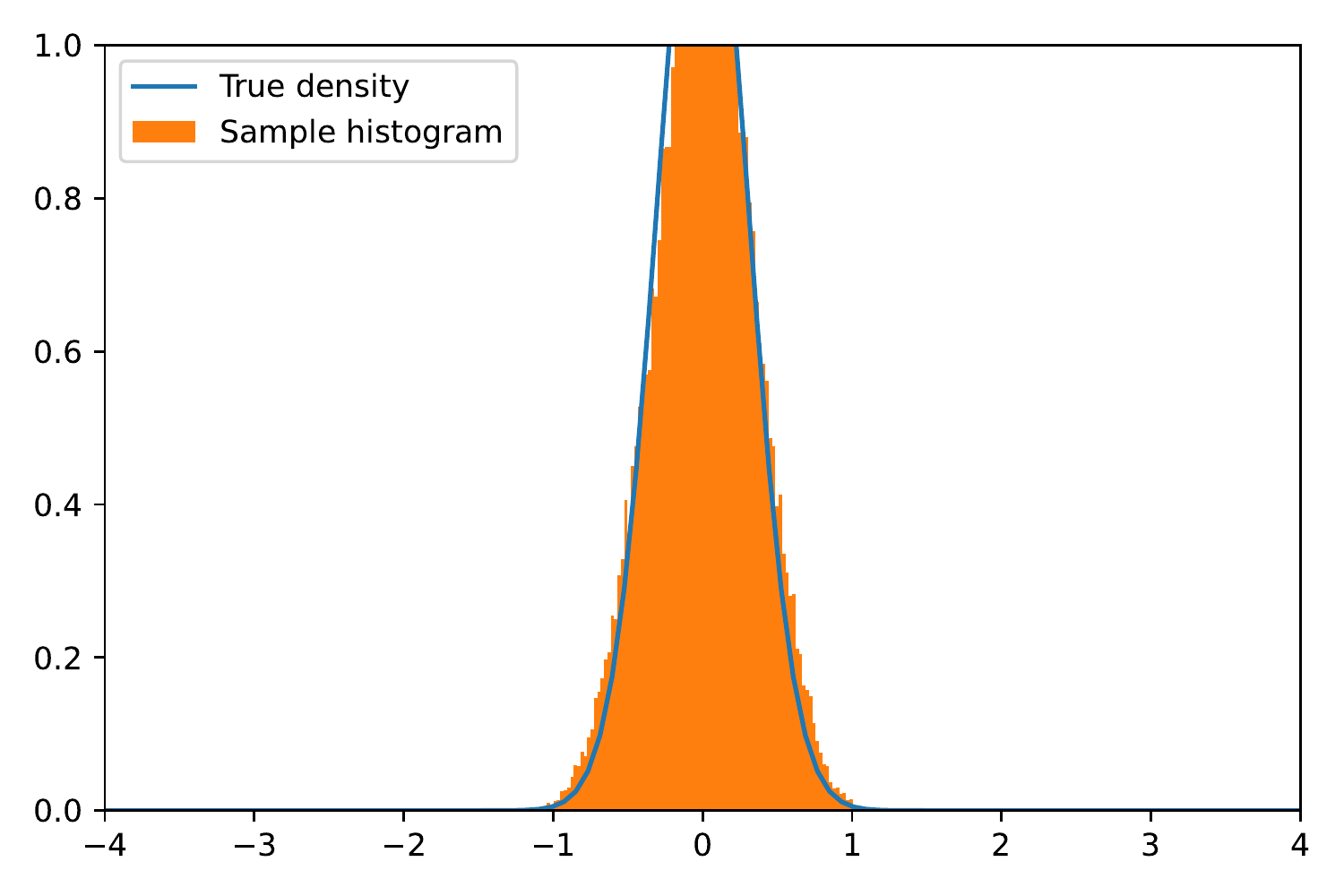}
        \caption{$t=1$}
    \end{subfigure}
    \begin{subfigure}{0.24\textwidth}
        \centering
        \includegraphics[width=0.95\linewidth]{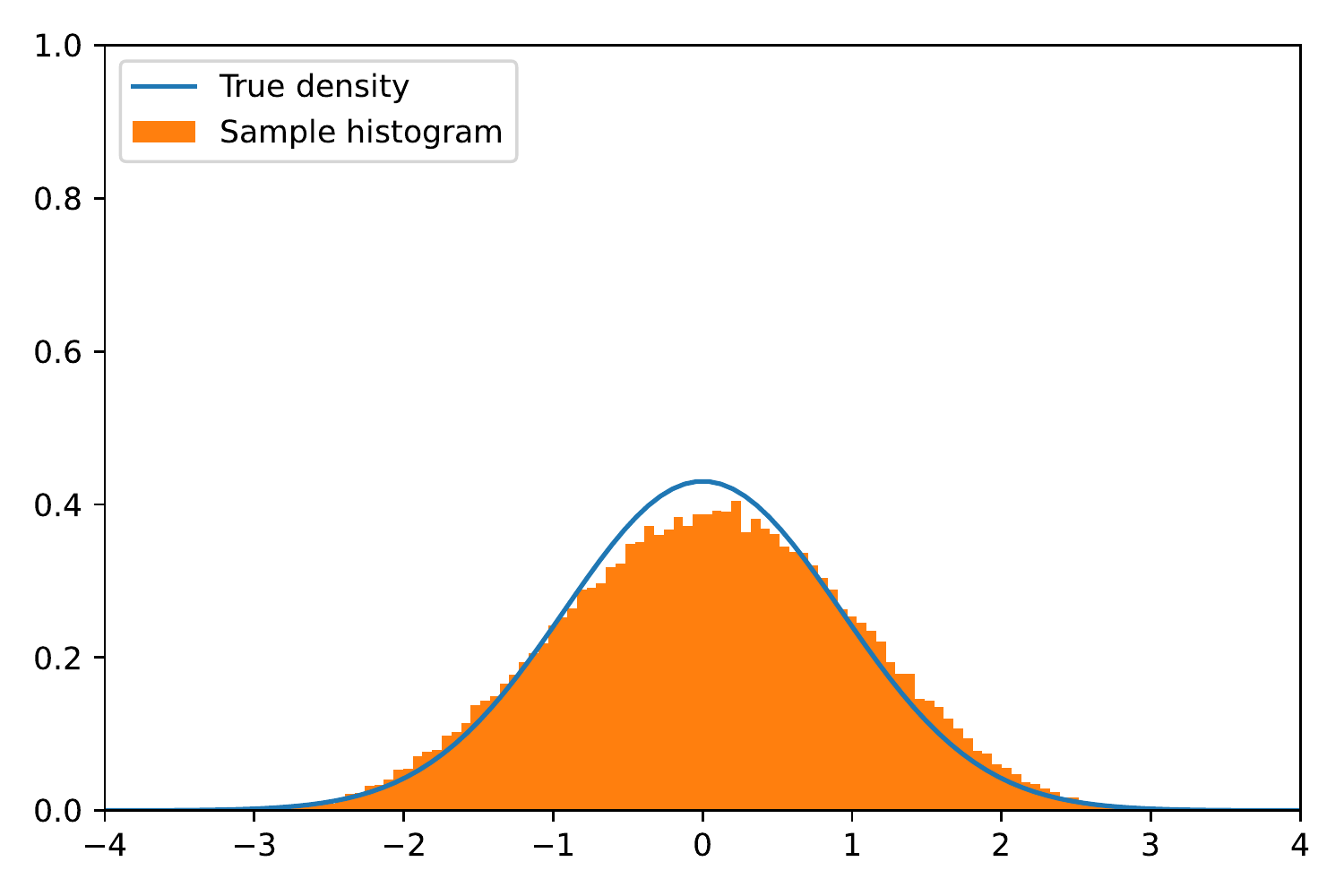}
        \caption{$t=1.5$}
    \end{subfigure}
    \\
       \begin{subfigure}{0.24\textwidth}
        \centering
        \includegraphics[width=0.95\linewidth]{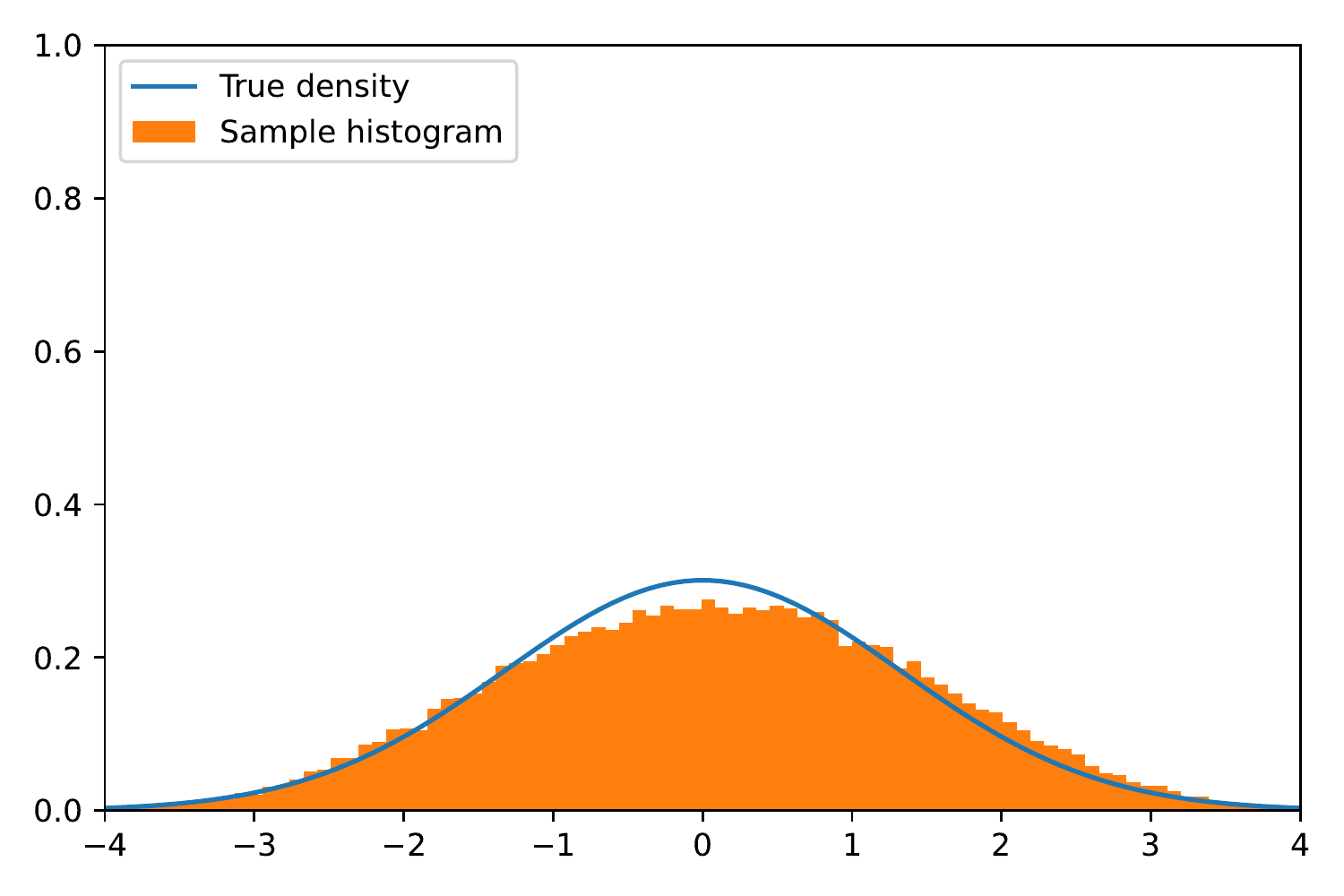}
        \caption{$t=2$}
    \end{subfigure}%
    \begin{subfigure}{0.24\textwidth}
        \centering
        \includegraphics[width=0.95\linewidth]{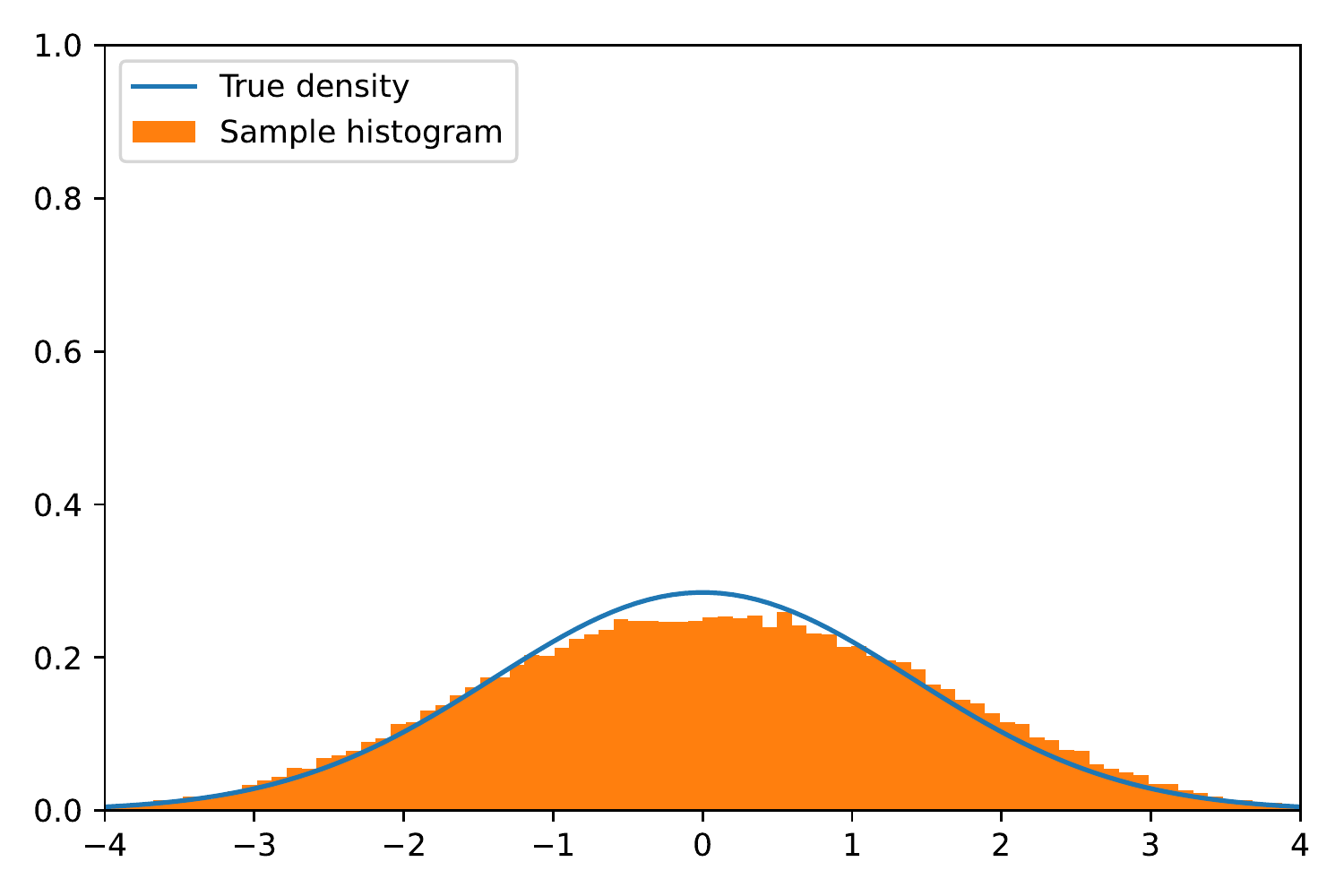}
        \caption{$t=2.5$}
    \end{subfigure}
    \begin{subfigure}{0.24\textwidth}
        \centering
        \includegraphics[width=0.95\linewidth]{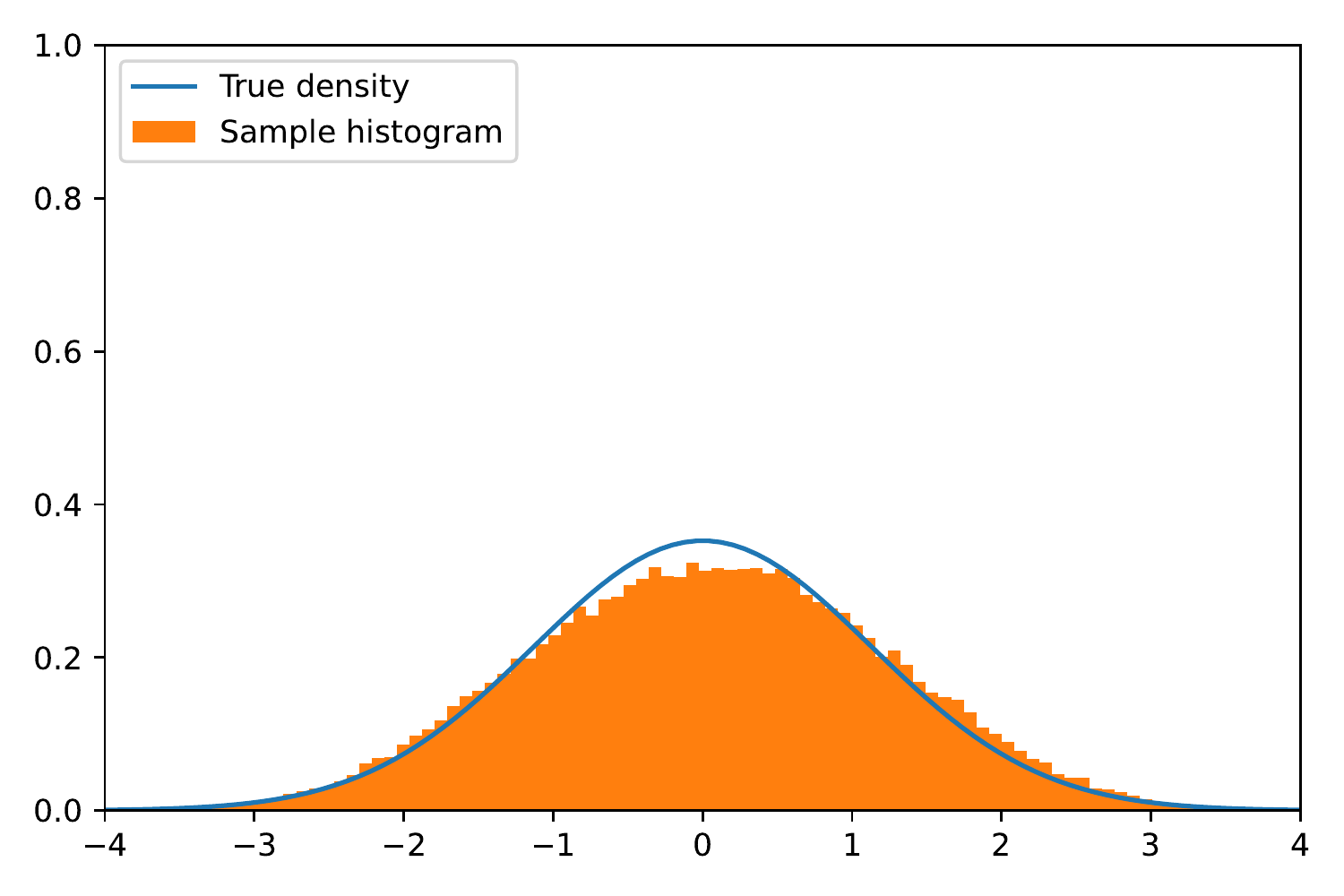}
        \caption{$t=3$}
    \end{subfigure}
    \begin{subfigure}{0.24\textwidth}
        \centering
        \includegraphics[width=0.95\linewidth]{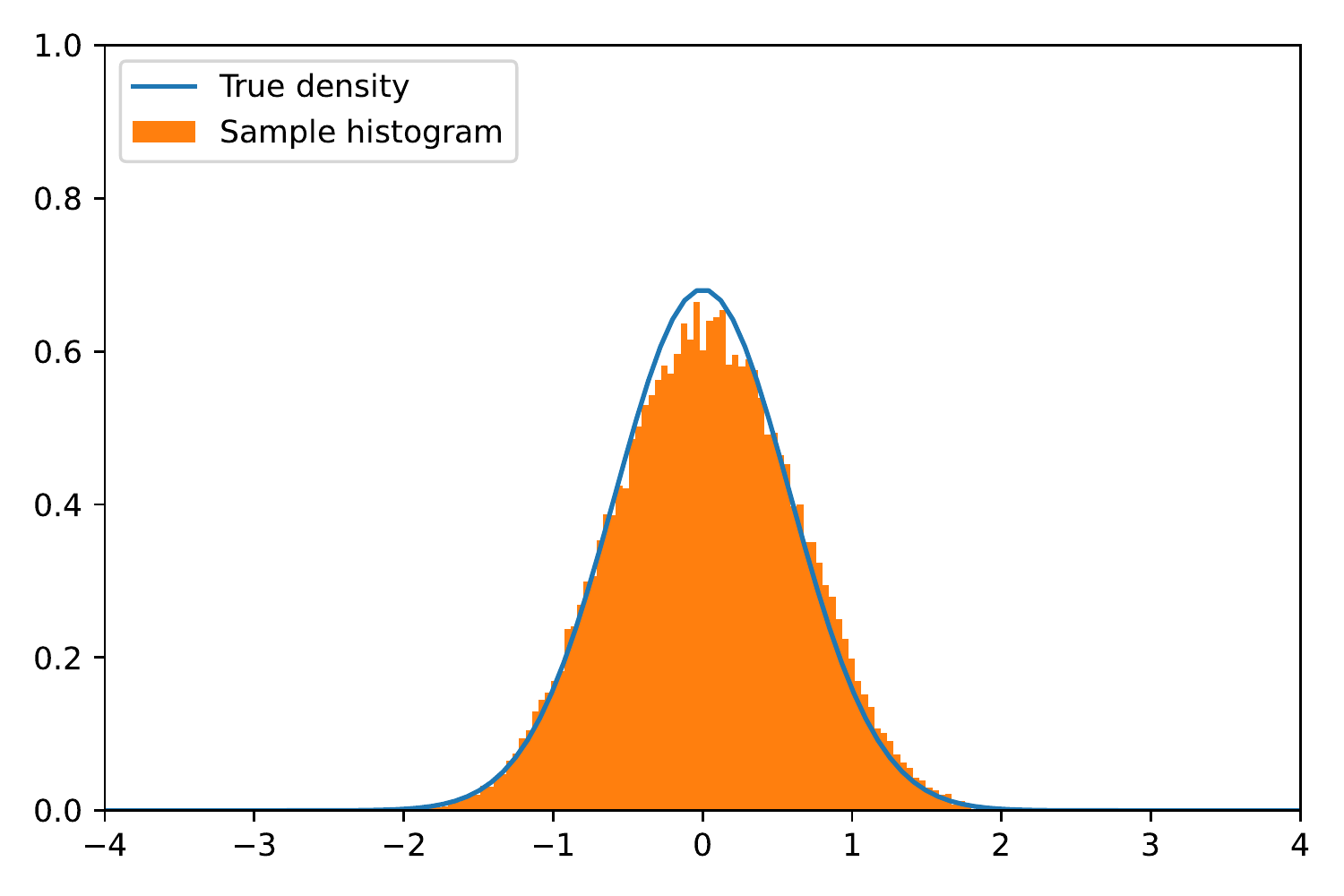}
        \caption{$t=3.5$}
    \end{subfigure}
    \caption{Time evolution of the projected histogram for $10$-d harmonic oscillator example}
    \label{quad 10d histplot}
\end{figure*}



\begin{figure}[!htb]
\centering
\minipage{0.3\textwidth}
\centering
  \includegraphics[width=0.95\linewidth]{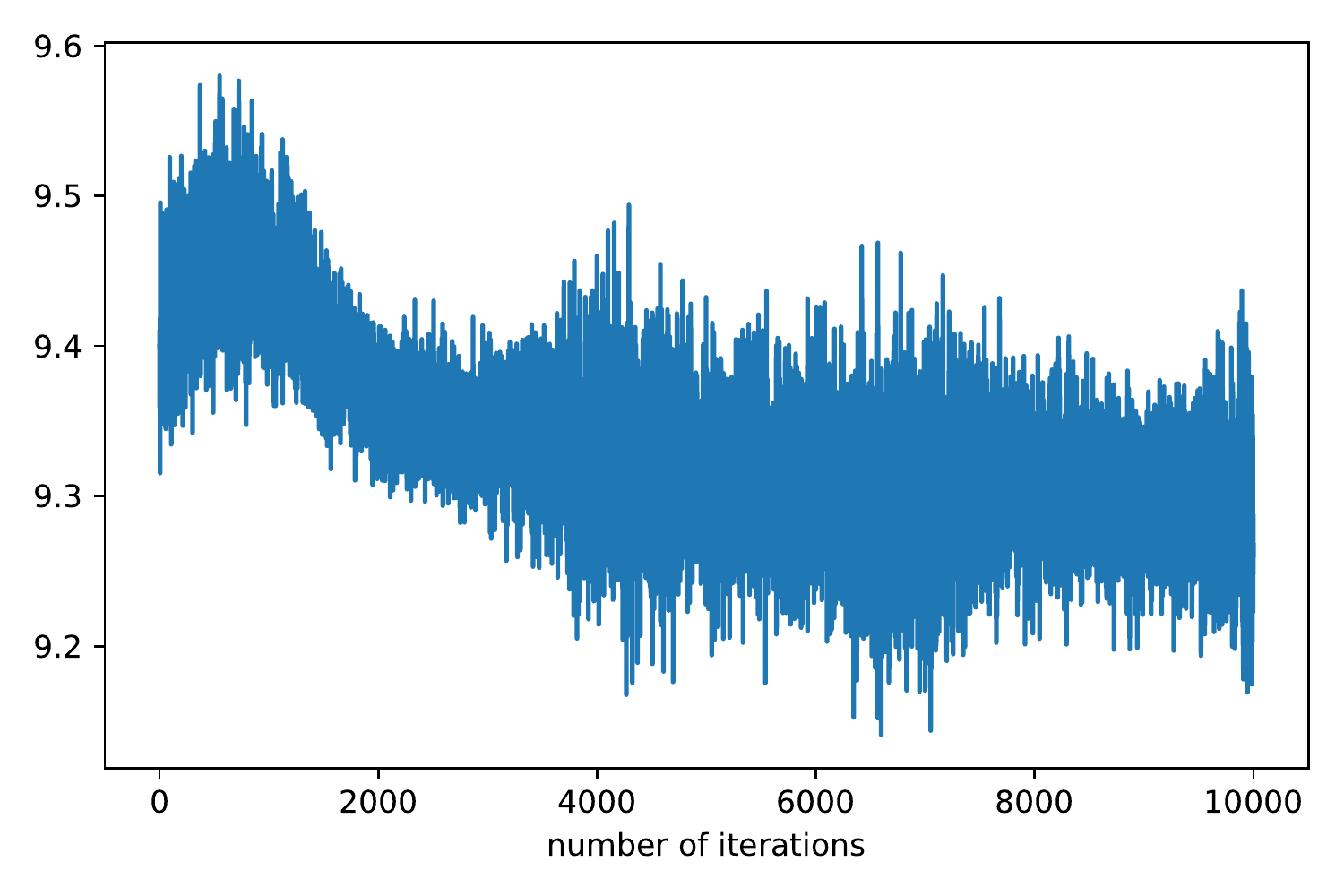}
  \caption{Hamiltonian}
  \label{10d Hamiltonian}
\endminipage\hfill
\minipage{0.3\textwidth}
\centering
  \includegraphics[width=0.95\linewidth]{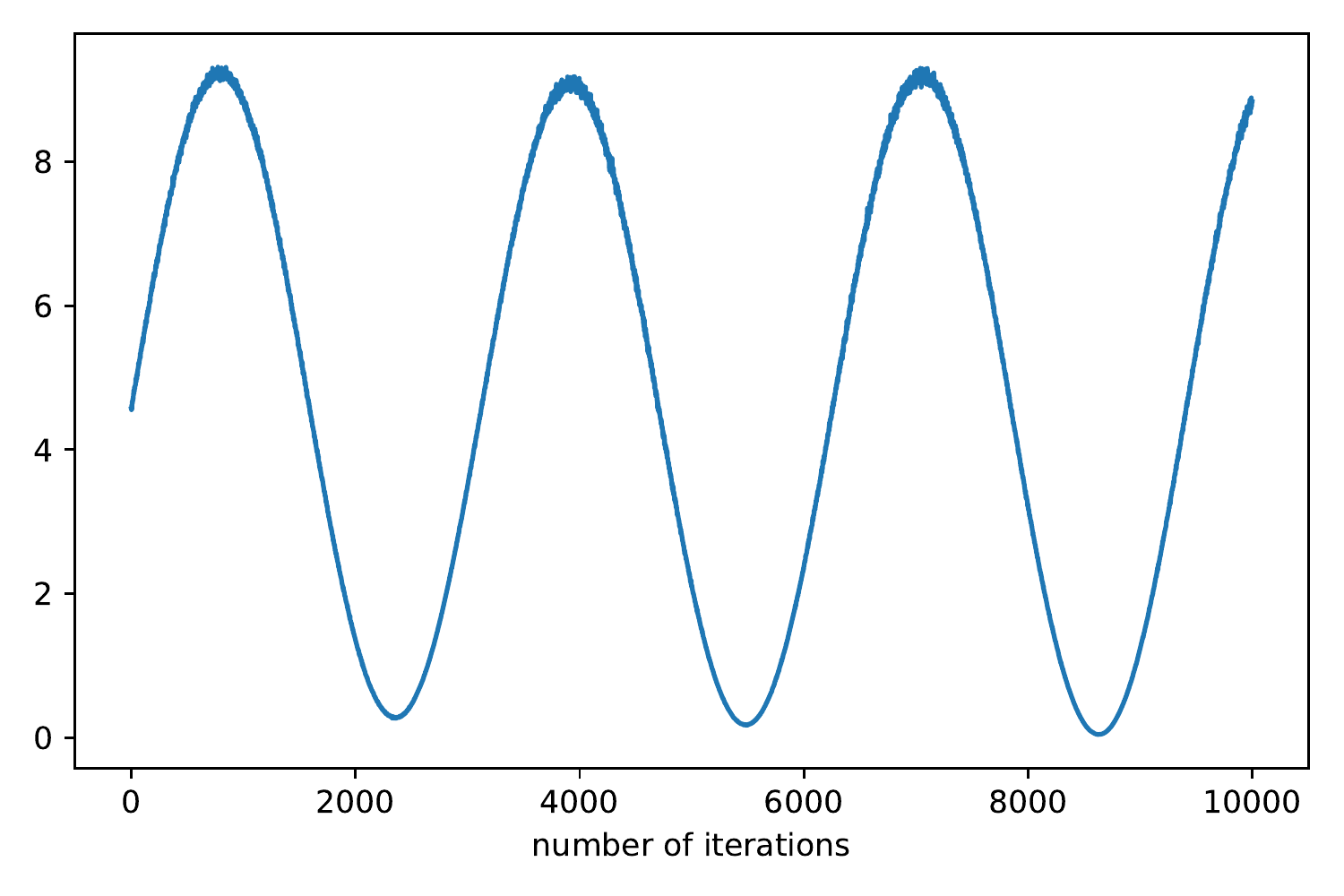}
  \caption{Kinetic energy}
  \label{10d KE}
\endminipage\hfill
\minipage{0.3\textwidth}
\centering
  \includegraphics[width=0.95\linewidth]{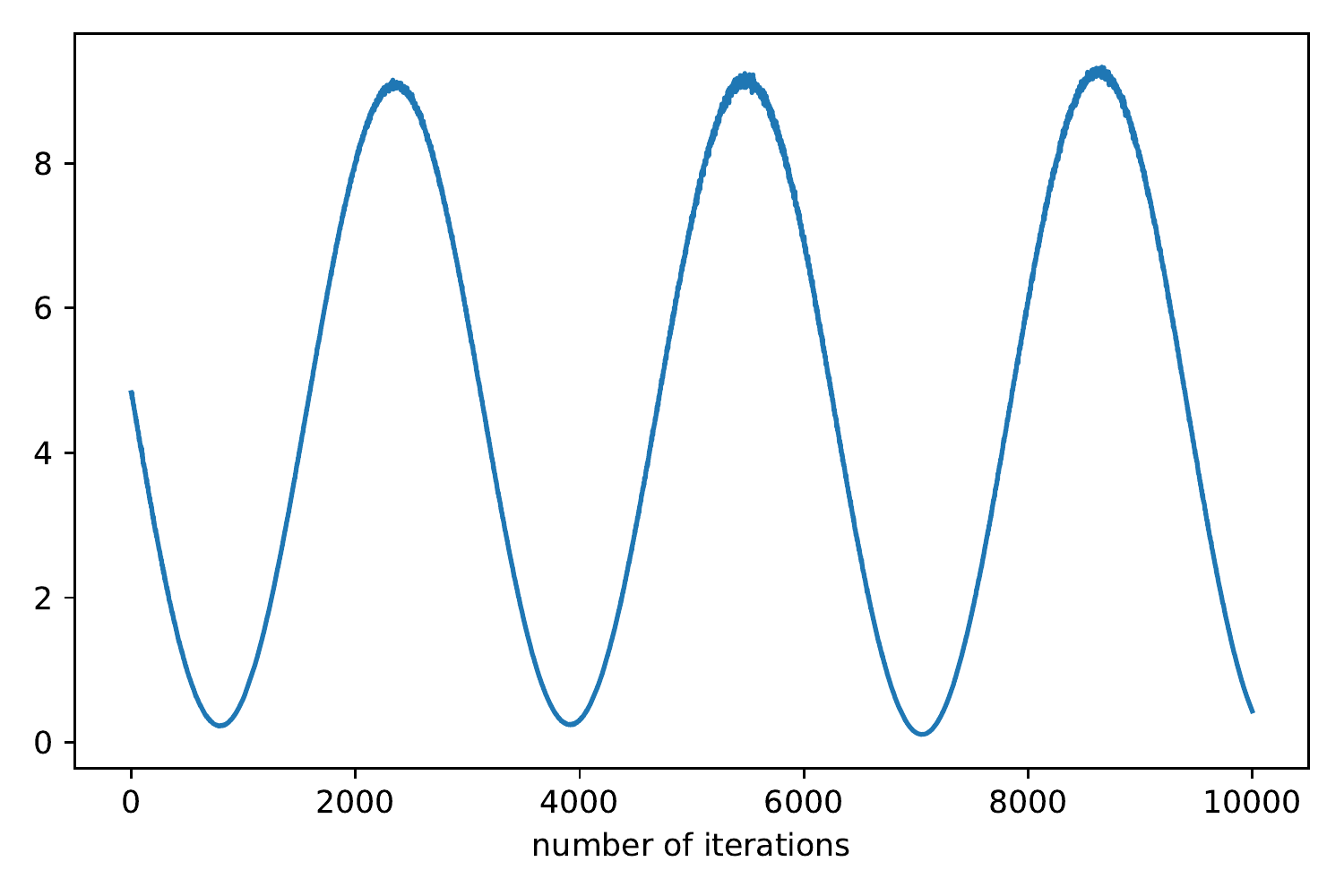}
  \caption{Potential energy}
  \label{10d PE}
\endminipage
\end{figure}


\subsection{Interaction potential}
We consider some nonlinear potentials, such as the interaction potential or entropy in the reminder of this section. The nonlinear terms are generally hard to evaluate directly through the traditional methods, although some algorithms such as kernel density estimation \cite{terrell1992variable, 10.1214/10-AOS799} can be used for it. On the contrary, push-forward maps may provide a good alternative for this purpose.

Interaction potential is described through the following potential energy $\mathcal{F}$: 
\begin{equation}
    \label{eq:coulomb-potential}
    \begin{aligned}
    \mathcal{F}(\rho)=\iint_{\mathbb{R}\times\mathbb{R}} C(x, y)\rho(x)\rho(y)dxdy.
    \end{aligned}
\end{equation}
$\mathcal{F}$ involves double integral of $\rho$ and introduces nonlinearity, which makes the problem much harder to solve in general. In physics, $C(x, y)=\frac{a}{|x-y|^2}$ is often used to model gravity or Coulomb force. To avoid the numerical instability we will use a modified $C_b$ with $b=0.1$ in the following experiment:
\begin{equation}
    \label{eq:interaction-func}
    \begin{aligned}
    C_b(x, y)=\frac{1}{b+|x-y|^2}.
    \end{aligned}
\end{equation}
We choose the same forward map $T_{\theta}$ as the neural network described in section \ref{geodesic sec}. For this problem we don't have a close form solution, so we compare the result with particle level numerical simulations. More precisely, we generate 10000 random samples and run the particle level dynamics to get numerical approximation to $T_t$, then compare our results $T_{\theta}(x)$ with the numerical results. We pick an initial point $x$, plot its trajectory from our model $T_{\theta}$ as well as the trajectory from numerical simulation in Figure \ref{fig: interaction potential demo}, which shows good agreement.
\begin{figure}[H]
    \centering
    \includegraphics[width=0.5\linewidth]{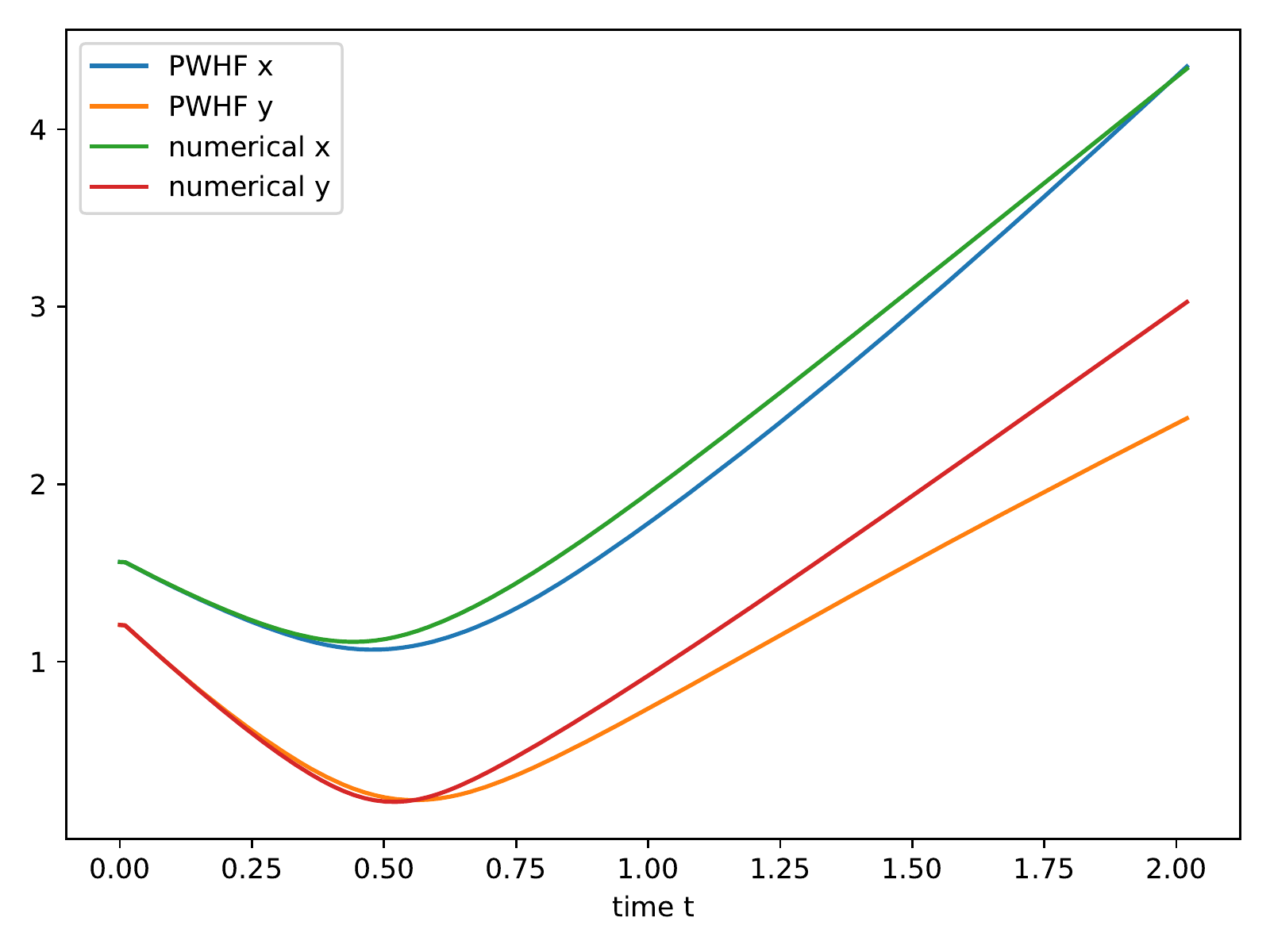}
    \caption{Projection of trajectory in interaction potential model}
    \label{fig: interaction potential demo}
\end{figure}

\subsection{Entropic potential}
Another example of nonlinear potential is
\begin{equation}
    \label{eq:entropy-potential}
    \begin{aligned}
    \mathcal{F}_{\textrm{entropy}}(\rho)=\int \rho(x)\log\rho(x)dx .
    \end{aligned}
\end{equation}
If we take diagonal map $T_{\theta}(x)=D_{\theta}x$ as the push-forward map where $D_{\theta}$ is a diagonal matrix and $\theta$ is its diagonal element, then the parameter dynamics can be solved exactly as shown in Section \ref{exact para Entropy}.
Again we solve the PWHF and compare our solution with true solution. The exact solution can be expressed as $T_t(x) = D(t)x$ with $D(t)$ defined in (\ref{entropy true para}). Figure \ref{fig: entropy params demo} shows the results from our PWHF (blue) against the exact solution (orange). They are nearly identical, which is expected because the error is close to zero according to the theoretical estimates.  

\begin{figure}[H]
    \centering
    \includegraphics[width=0.5\linewidth]{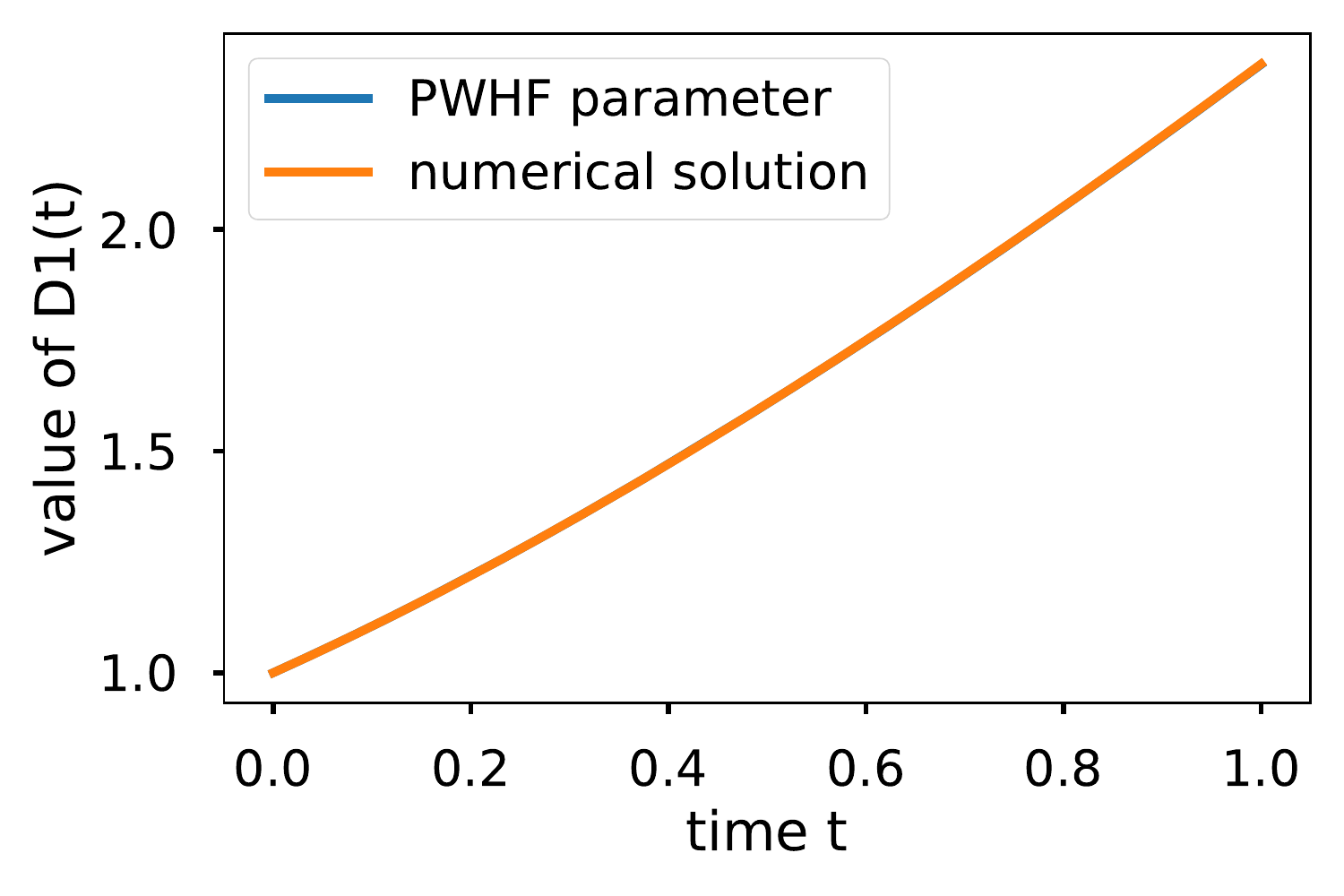}
    \caption{Value of $D, D_{\theta}$}
    \label{fig: entropy params demo}
\end{figure}
\subsection{Empirical bound on $\delta_0$} In section \ref{pseudo inverse error analysis}, we proved the convergence result for our $\theta(t)$. There are two crucial terms $\delta_0, \delta_1$ which show up in the error estimation (\ref{err est}) as well as (\ref{theorem: error estimation on Phi}). It's generally hard to give a sharp bound on these values, although one may resort to the well-known universal approximation theorem stating that they are small if proper neural networks are used. We can provide an empirical calculations for $\delta_0$ in the experiments.

We denote the approximation error at $\theta$ as:
\begin{align}
    \delta(\theta)= \int|\nabla\frac{\delta}{\delta \rho}\mathcal{F}\circ T_{\theta} (z) - \mathcal{K}_\theta[\nabla\frac{\delta}{\delta \rho}\mathcal{F}\circ T_{\theta}](z)|^2~d\lambda(z) .
\end{align}
It is easy to see that $\delta_0$ can be replaced by $\widehat{\delta}_0=\sup_{t\in[0, T]}\delta(\theta(t))$ in our error estimation, since only the information of $\delta(\theta)$ along the solution curve $\{\theta(t):t\in[0, T]\}$ is used in the proof.

Assume $\{\theta_k:0\leq k\leq K\}$ is the numerical solution from algorithm (\ref{alg:HFsolver}), where $K=T/h$ is the number of iterations. Denote the empirical bound on $\widehat{\delta}_0$ as:
\begin{align}
    \tilde{\delta}_0=\max_{0\leq k\leq K} \delta(\theta_k).
\end{align}
Let $\eta=\widehat{G}(\theta)^{\dagger}\nabla_{\theta}F(\theta)$ and notice that
\begin{align*}
    \mathcal{K}_\theta[\nabla\frac{\delta}{\delta \rho}\mathcal{F}\circ T_{\theta}](z)&=\partial_{\theta} T_\theta(z)\widehat{G}^{\dagger}(\theta) \int \partial_{\theta} T_\theta(z_0)^\top \nabla\frac{\delta}{\delta \rho}\mathcal{F}\circ T_{\theta}(z_0)\rho_\theta(z_0)~d\lambda(z_0)\\
    &=\partial_{\theta} T_\theta(z)\widehat{G}^{\dagger}(\theta) \nabla_{\theta}F(\theta)\\
    &=\partial_{\theta} T_\theta(z)\eta.
\end{align*}
Hence we can rewrite $\delta(\theta)$ as:
\begin{align*}
    \delta(\theta)&=\int|\nabla\frac{\delta}{\delta \rho}\mathcal{F}\circ T_{\theta}(z) -\partial_{\theta} T_\theta(z)\eta|^2~d\lambda(z)\\
    &=\int\left[\eta ^{\top}\partial_{\theta} T_\theta(z)^{\top}\partial_{\theta} T_\theta(z)\eta-2\nabla\frac{\delta}{\delta \rho}\mathcal{F}\circ T_{\theta}(z)^{\top}\partial_{\theta} T_\theta(z)\eta+|\nabla\frac{\delta}{\delta \rho}\mathcal{F}\circ T_{\theta}(z)|^2\right]~d\lambda(z)\\
    &=\eta^{\top}\widehat{G}(\theta)\eta -2\nabla_{\theta}F(\theta)^{\top}\eta + \mathbb{E}_{\rho_{\theta}}[|\nabla\frac{\delta}{\delta \rho}\mathcal{F}\circ T_{\theta}(z)|^2].
\end{align*}
With the above identity, we are able to evaluate the empirical error $\tilde{\delta}_0$. We report its value for each linear potential $V(x)$ in table~\ref{tab: empirical error}.

\begin{table}[htbp]
  \centering
  \caption{Empirical approximation error}
  \label{tab: empirical error}
  \begin{tabular}{|c|c|c|c|}
    \hline
    \textbf{Experiment}&
    \textbf{2d Geodesic} & \textbf{2d Harmonic Oscillator} & \textbf{10d Harmonic Oscillator} \\
    \hline
    Physical time $T$ & 4 & 40 & 10 \\
    \hline
    Number of iterations $K$& 2000 & 20000 & 10000 \\
    \hline
    Empirical error $\delta_0$&0 & 0.0035 & 0.0908 \\
    \hline
  \end{tabular}
\end{table}




\section{Discussion}
\label{sec:discussion}
The proposed method is also potentially applicable to many other important topics, for example, the nonlinear Schr\"odinger equation, the Schr\"odinger bridge problem, and the geodesic between two points on Wasserstein manifold. However, it needs further investigations to extend our algorithm to them, which is beyond the scope of this paper. We only provide a brief discussion here.

{\bf Geodesic between two points on Wasserstein manifold:} In the numerical experiments, we consider the initial value problem for Wasserstein geodesic equation with given $\rho_0, \Phi_0$. The commonly encountered geodesic problem is a 2-point boundary value problem, i.e., with given $\rho_0, \rho_T$ but no prior knowledge about the initial $\Phi_0$. In this case, there is not enough information for us to initialize the $p$ variable. A possible attempt is to choose a initial guess $\Phi(0, \cdot) $ and then apply shooting method on the ODE dynamics. Collocation or finite difference can be other options too. They deserve careful study in the future.  

{\bf Schr\"odinger equation:} Consider a potential $V\in C^{\infty}(\mathbb{R}^d)$ and functional $\mathcal{F}_1\in C^{\infty}(\mathcal{P}_{+}(\mathbb{R}^d))$ with variation $\frac{\delta}{\delta\rho}\mathcal{F}_1(\rho)=f(\rho)$, the nonlinear Schrodinger equation is given by:
\begin{align}
    i\frac{\partial}{\partial t}\psi(t, x)=-\frac{1}{2}\Delta\psi(t, x)+V(x)\psi(t, x)+f(\lvert{\psi}\rvert^2)\psi(t, x).
\end{align}
With the Madelung transform 
we can rewrite the complex wave function as $\psi(t, x)=\sqrt{\rho(t, x)}e^{i\Phi(t, x)}$, then the above equation in $\psi$ can be reformulated as a Wasserstein Hamiltonian flow with Hamiltonian:
\begin{align}
    \mathcal{H}(\rho, \Phi) = \int_{\mathbb{R}^d} \frac{1}{2}|\nabla \Phi(x)|^2\rho(x) dx+\int_{\mathbb{R}^d}V(x)\rho(x)dx+\mathcal{F}_1(\rho)+\frac{1}{8}\int_{\mathbb{R}^d}\lvert \nabla \log\ \rho(x)\rvert^2\rho(x)dx.
\end{align}

Possible challenges may arise due to the nonlinear potential $\int_{\mathbb{R}^d}\lvert \nabla \log\ \rho(x)\rvert^2\rho(x)dx$. This term involves the computation of space derivative of log density function, which is generally unavailable for multi-layer perceptron or the normalizing flow. One strategy is to use the Neural ODE as the push-forward map \cite{chen2018neural, grathwohl2018ffjord}, which supports such a computation. Another possible solution is to reformulate this functional as a mininization problem and estimate it via optimization techniques.

{\bf Schr\"odinger Bridge problem:} Similar to the Schrodinger equation, the Schr\"odinger Bridge problem can also be reformulated as WHF through the Hopf-Cole transform. Consider the Schrodinger bridge equation, sometimes also known as ``Schr\"{o}dinger system",
\begin{equation}
\label{SBE}
    \partial_t\eta_t=\frac{1}{2}\Delta \eta_t, \quad \partial_t\eta^*_t=-\frac{1}{2}\Delta \eta^*_t
\end{equation}
Here $\eta,\eta^*$ are two real valued functions. With the Hopf-Cole transformation:
\begin{equation}
    \eta = \sqrt{\rho}e^{\Phi/2}, \quad \eta^*=\sqrt{\rho}e^{-\Phi/2},
\end{equation}
equation \eqref{SBE} becomes a Wasserstein Hamiltonian flow with Hamiltonian:
\begin{align}
\label{SBE Hamiltonian}
    \mathcal{H}(\rho, \Phi) = \int_{\mathbb{R}^d} \frac{1}{2}|\nabla \Phi(x)|^2\rho(x) dx-\frac{1}{8}\int_{\mathbb{R}^d}\lvert \nabla \log\ \rho(x)\rvert^2\rho(x)dx.
\end{align}
The challenge in this problem is the combination of the difficulties in the two aforementioned examples. On the one side, we need to handle the computational challenge of $\nabla \log\ \rho$ term as mentioned in the Schrodinger equation. On the other side, Schr\"odinger bridge problem is a 2-point boundary value problem with given $\rho_0, \rho_T$ but no prior knowledge about $\Phi_0$. 

\section{Conclusion}
\label{sec:conclusion}
We close the discussion by summarizing that we developed a sampling based approach called PWHF for solving WHF in this work. PWHF is derived by applying Hamiltonian mechanics in the parameter space equipped with the pullback Wasserstein metric. Error estimates show that PWHF can approximate WHF with provable accuracy provided the pushforward map being efficient in approximation. Numerical examples demonstrate that our method is robust to the singularity of the equation and can scale up to high dimensional problems. There are still many work to be done about the WHF, which includes but not limited to: the application of PWHF to other Hamiltonian system such as Schr\"{o}dinger equation or Schr\"{o}dinger Bridge system; theoretical analysis on the quantities $\delta_0, \delta_1, \delta_2, \lambda_{\min}(\widehat{G})$; extension of PWHF to general Hamiltonian flow with on-quadratic kinetic energy. We hope the current study may serve as an starting point for furthering those investigations.





\section{Acknowledgments}
This research is partially supported by NSF grants DMS-1925263, DMS-2152960, DMS-2307465, DMS-2307466, and ONR grant N00014-21-1-2891. 

\appendix

\section{Derivation of Lagrangian $L$}\label{calculate L}
Recall that we introduce Lagrangian $L$ defined as
\begin{equation*}
  L(\theta, \dot\theta)=\mathcal{L}(T_{\theta \sharp}\lambda, (T_{\theta \sharp})_*\dot\theta).
\end{equation*}
We denote $\rho_\theta=T_{\theta\sharp}\lambda$, then $(T_{\theta \sharp})_*\dot \theta = \frac{\partial \rho_\theta}{\partial t}$. Actually, we can calculate the term $\frac{\partial \rho_\theta}{\partial t}$ as follows (c.f. proof of Theorem 3.4 of \cite{liu2022neural})
\begin{equation}
\frac{\partial \rho_\theta}{\partial t} = \frac{\partial\rho_\theta(x)}{\partial \theta}\cdot\dot\theta = -\nabla\cdot(\rho_\theta(x)\nabla\Psi_\theta(x)^\top\dot\theta).  \label{calculate dt rho theta}
\end{equation}

Now the Lagrangian $L$ can be computed as
\begin{align*}
    L(\theta, \dot\theta) & = \mathcal{L}(\rho_\theta, \frac{\partial \rho_\theta}{\partial \theta} \cdot \dot\theta)\\
    & = \frac{1}{2}\left(\int_{\mathbb{R}^d} -\nabla\cdot(\rho_\theta(x)\nabla\Psi_\theta(x)^\top\dot\theta) (-\Delta_{\rho_\theta})^{\dagger} (-\nabla\cdot(\rho_\theta(x)\nabla\Psi_\theta(x)^\top\dot\theta))~dx\right) - \mathcal{F}(\rho_\theta) \\ 
    & = \frac{1}{2}\left(\int_{\mathbb{R}^d} -\nabla\cdot(\rho_\theta(x)\nabla\Psi_\theta(x)^\top \dot\theta) \Psi_\theta^\top\dot\theta ~dx\right) - \mathcal{F}(\rho_\theta) \\
    & = \frac{1}{2}\left(\int_{\mathbb{R}^d} \dot\theta^\top \nabla\Psi_\theta(x)\nabla\Psi_\theta(x)^\top\dot\theta \rho_\theta(x)~dx \right) - \mathcal{F}(\rho_\theta) \\
    & = \frac{1}{2}\dot\theta^\top G(\theta)   \dot\theta-F(\theta).
\end{align*}

\section{Further discussion on geometric property of the map $\tau$}\label{append a }

Let us recall that $\tau$ is a map defined as
\begin{align*}
  \tau: & ~   T^*\Theta \longrightarrow T^*\mathcal{P}_\Theta\subset\mathcal{T}^*\mathcal{P},\nonumber \\
        & (\theta,~p) \longmapsto (T_{\theta\sharp}\lambda,~ \Psi_\theta^\top G(\theta)^{-1}p ).
\end{align*}
We are going to discuss the condition under which the map $\tau$  preserves the symplectic form. To express our idea clearly, let us first introduce the symplectic forms on the phase spaces $\mathcal{T}^*\Theta$ and $\mathcal{T}^*\mathcal{P}$. 



Let us recall that $\Theta$ is an open subset of $\mathbb{R}^m$. It is natural to treat $\mathcal{T}^*\Theta$ as a symplectic manifold equipped with the symplectic form 
$\omega_\Theta$ whose associated matrix representation is $$\Omega_\Theta = \left[\begin{array}{cc}
     & I_m \\
    -I_m  & 
\end{array}\right].$$ 
That is, $\omega_\Theta$ is a bilinear form defined on $\mathcal{T}(\mathcal{T}^*\Theta)$ such that for any two $C^1$ curves $\{\theta_t^1, p_t^1\}_{t\geq 0}$, $\{\theta_t^2, p_t^2\}_{t\geq 0}$ starting at the same point $(\theta_0, p_0)$, $\omega_\Theta$ is defined as $$\omega_\Theta((\dot\theta_0^1, \dot p_0^1), (\dot\theta_0^2, \dot p_0^2)) = \dot \theta_0^{1\top} \dot p_0^2 - \dot \theta_0^{2\top} \dot p_0^1.$$

On the other hand, we can also treat $\mathcal{T}^*\mathcal{P}$ as a symplectic manifold equipped with the symplectic form 
$\omega_{\mathcal{P}}$ whose associated matrix representation is $$\Omega_{\mathcal{P}} = \left[\begin{array}{cc}
     &  \textrm{Id} \\
    -\textrm{Id}  & 
\end{array}\right]$$. That is, $\omega_{\mathcal{P}}$ is a bilinear form defined on $\mathcal{T}(\mathcal{T}^*\mathcal{P})$ such that for any two $C^1$ curves $\{\rho_t^1, \Phi_t^1\}_{t\geq 0}$, $\{\rho_t^2, \Phi_t^2\}_{t\geq 0}$ both starting at $(\rho_0, \Phi_0)$, $\omega_{\mathcal{P}}$ is defined as $$\omega_{\mathcal{P}}((\dot\rho_0^1, \dot \Phi_0^1), (\dot\rho_0^2, \dot \Phi_0^2)) = \int_{\mathbb{R}^d} \partial_t\rho_0^1 \cdot \partial_t\Phi_0^2 - \partial_t \rho_0^2 \cdot \partial_t \Phi_0^1~dx.$$

We may treat both $(\mathcal{T}^*\Theta, \omega_\Theta)$ and $(\mathcal{T}^*\mathcal{P}, \omega_\mathcal{P})$ as symplectic manifolds.


We say a map $f: (M, \omega_M) \rightarrow (N, \omega_N)$ preserves the symplectic form if $f^*\omega_N = \omega_M$. Such geometric property is satisfied by a class of important maps in classical mechanics known as canonical transformations. For the sake of the completeness of our paper, we will investigate whether $\tau$ used in our method satisfies such a property.

Let us treat $M = \Theta$, $\omega_M = \omega_{\Theta}$ and $N = \mathcal{P}$, $\omega_{N}=\omega_{\mathcal{P}}$. In order to calculate $\tau^*\omega_\Theta$, we pick two arbitrary smooth curves $\{(\theta^{1}, p^1)\}$, $\{(\theta^2, p^2)\}$ on $\mathcal{T}^*\Theta$. Suppose the two curves intersect at $(\theta, p)$ when $t=0$. The the push-forward of vector fields $(\dot\theta^{i}, \dot p^i)$ ($i=1,2$) via $\tau$ is computed as
\begin{equation}
  \tau_* (\dot\theta^i, \dot p^i) = (-\nabla\cdot(\rho_{\theta}\nabla\Psi_{\theta}^\top \dot\theta^i),~  \Psi_{\theta}^\top G(\theta)^{-1}\dot p^i + \dot\theta^{i\top}\partial_\theta (\Psi_{\theta}^\top G(\theta)^{-1}) p^i )\in \mathcal{T}_{\tau(\theta, p)}\mathcal{T}^*\mathcal{P}_\Theta.  \quad i=1,2 \label{tau pushfwd velocity field}
\end{equation}

Then we compute
\begin{align}
  & \omega_\mathcal{P}(\tau_*(\dot\theta^1, \dot p^1), \tau_*(\dot\theta^2, \dot p^2))  \nonumber\\
  = &   \int_{\mathbb{R}^d} -\nabla\cdot(\rho_{\theta}\nabla\Psi_{\theta}^\top \dot\theta^1) \Psi_{\theta}^\top G(\theta)^{-1}\dot p^2~dx + \int_{\mathbb{R}^d}  -\nabla\cdot(\rho_{\theta}  \frac{\partial T_\theta}{\partial \theta}\circ T^{-1}_\theta(\cdot)  \dot\theta^1) 
 \dot\theta^{2\top}\partial_\theta ( p^{2\top} G(\theta)^{-1}\Psi_{\theta})~dx  \nonumber \\
  &   - \int_{\mathbb{R}^d} -\nabla\cdot(\rho_{\theta}\nabla\Psi_{\theta}^\top \dot\theta^2) \Psi_{\theta}^\top G(\theta )^{-1}\dot p^1~dx - \int_{\mathbb{R}^d}  -\nabla\cdot(\rho_{\theta}\frac{\partial T_\theta}{\partial \theta}\circ T^{-1}_\theta(\cdot) \dot\theta^2) 
 \dot\theta^{1\top}\partial_\theta (p^{1\top} G(\theta)^{-1}\Psi_{\theta})~dx  \nonumber
\end{align}
Notice that we replace $-\nabla\cdot(\rho_{\theta}\nabla\Psi_{\theta}^\top \dot\theta^i)$ by $-\nabla\cdot(\rho_{\theta}\frac{\partial T_\theta}{\partial \theta}\circ T^{-1}_\theta(\cdot) ~\dot\theta^i)$ for the second and the third term above.

Now the first integral equals $$  \dot\theta^{1\top} \left(\int_{\mathbb{R}^d}\nabla\Psi_{\theta} \nabla\Psi_{\theta}^\top\rho_{\theta}~dx\right) G(\theta)^{-1}\dot p^2=\dot\theta^{1\top} G(\theta) G(\theta)^{-1}\dot p=\dot\theta^{1\top} \dot p^2. $$
Similarly, the fourth term equals $\dot\theta^{2\top}\dot p^1$.

In order to analyze the second and the third term, we focus on the following third-order tensor
\begin{equation}
   \chi_\theta   = \int_{\mathbb{R}^d} \left[ {\frac{\partial T_\theta}{\partial \theta}(x)}^\top ~ \partial_\theta(\nabla (G( \theta )^{-1}\Psi_{\theta})_k )\circ T_\theta(x)    \right]_{k=1}^m ~d\lambda.
\end{equation}
Let us denote $\varphi_{\theta, k} = (G(\theta)^{-1}\Psi_\theta)_k$, i.e., $\varphi_{\theta,k}$ is the $k-$th component function of $G(\theta)^{-1}\Psi_\theta$. Notice that $\partial_\theta$ does not really take the derivative of $T_\theta$. Then one can verify that for all $k$, $1\leq k\leq m$,  
\begin{align*}
\left[ {\frac{\partial T_\theta}{\partial \theta}(x)}^\top ~ \partial_\theta(\nabla \varphi_{\theta,k} )\circ T_\theta(x)   \right]_{k=1}^m = & \underbrace{\partial_\theta\left[{\frac{\partial T_\theta}{\partial \theta}(x)}^\top {\nabla\Psi_{\theta}\circ T_\theta(x)}^\top G( \theta )^{-1}\right]}_{(1)}\\
 & - \underbrace{\left[{\frac{\partial^2 T_\theta}{\partial\theta^2}(x)}^\top \nabla \varphi_{\theta,k}\circ T_\theta(x)\right]_{k=1}^m}_{(2)}
 - \underbrace{\left[{\frac{\partial T_\theta}{\partial \theta}(x)}^\top ~ \nabla^2\varphi_{\theta,k}\circ T_\theta(x)\frac{\partial T_\theta(x)}{\partial\theta}\right]_{k=1}^m}_{(3)} 
\end{align*}
Let us integrate (1) w.r.t. $\lambda$, by swapping integration and $\partial_\theta$, it is not hard to verify that the integration of (1) equals $0$, since the integral inside the square brackets equals identity matrix $I_m$, which is independent of $\theta$. Thus we know that the tensor  $\chi(\theta)$ is the integration of the sum of terms (2) and (3), i.e.,
\begin{equation}
 \chi_\theta = \int_{\mathbb{R}^d} \left[{\frac{\partial^2 T_\theta}{\partial\theta^2}(x)}^\top \nabla \varphi_{\theta,k}\circ T_\theta(x) + {\frac{\partial T_\theta}{\partial \theta}(x)}^\top ~ \nabla^2\varphi_{\theta,k}\circ T_\theta(x)\frac{\partial T_\theta(x)}{\partial\theta} \right]_{k=1}^m 
 ~d\lambda.
\end{equation}
Thus one can verify that
\begin{equation}
   \omega_\mathcal{P}(\tau_*(\dot\theta^1, \dot p^1), \tau_*(\dot\theta^2, \dot p^2)) = \dot\theta^{1\top} \dot p^2 - \dot\theta^{2\top} \dot p^1  - \chi_\theta(\dot\theta^1, \dot\theta^2, p) - \chi_\theta(\dot\theta^2, \dot\theta^1, p).  \label{preserve symp }
\end{equation}
Here, for any $u,v,w\in\mathbb{R}^m$, the tensor-vector multiplication is defined as
\begin{align*}
\chi(\theta)(u,v,w) =  \int_{\mathbb{R}^d} & {u^\top\frac{\partial^2 T_\theta}{\partial\theta^2} v } \cdot \nabla (w^\top G(\theta)^{-1}\Psi_\theta)\circ T_\theta(x) \\
& + {\frac{\partial T_\theta(x)}{\partial \theta} u}^\top ~ \nabla^2(w^\top G(\theta)^{-1}\Psi_\theta)\circ T_\theta(x) ~\frac{\partial T_\theta (x)}{\partial\theta} v ~d\lambda.
\end{align*}
By definition of pullback of differential form, we have $\tau^*\omega_\mathcal{P}((\dot\theta^1, \dot p^1), (\dot\theta^2, \dot p^2)) = \omega_\mathcal{P}(\tau_*(\dot\theta^1, \dot p^1), \tau_*(\dot\theta^2, \dot p^2))$; 
We can further verify that $\dot\theta^{1\top} \dot p^2 - \dot\theta^{2\top} \dot p^1 =\omega_\Theta((\dot\theta^1, \dot p^1), (\dot\theta^2, \dot p^2))$. Also, $\chi_\theta$ is symmetric w.r.t. the first two components, i.e., $\chi_\theta(\dot\theta_1, \dot\theta^2, \cdot) = \chi_\theta(\dot\theta^2, \dot\theta^1, \cdot)$; Thus the above calculation \eqref{preserve symp } leads to
\begin{equation}
  \tau^*\omega_\mathcal{P}((\dot\theta^1, \dot p^1), (\dot\theta^2, \dot p^2))=\omega_\Theta((\dot\theta^1, \dot p^1), (\dot\theta^2, \dot p^2)) - 2\chi_\theta(\dot\theta^1, \dot\theta^2,   p).
\end{equation}
This implies that the symplectic matrix associated with $\tau^*\omega_{\mathcal{P}}$ takes the following form
\begin{equation}
   \Omega ( \theta, p ) = \left[\begin{array}{cc}
      -  2\chi_\theta(\cdot,\cdot, p)  &  -I_m  \\
       I_m  &    O_m 
    \end{array}\right].
\end{equation}


In most cases, it is not guaranteed that $\tau$ preserves the symplectic form $\omega_{\mathcal{P}}$ since the parametrized push-forward map $T_\theta$ may not guarantee that $\chi_\theta=0$.

It is worth mentioning that the preservation of the symplectic form is not the necessary condition for the convergence of our numerical method: Although our $\tau$ is not guaranteed to preserve the symplectic form, we still have theoretical guarantees on the numerical accuracy of our method (c.f. Section 3.4). 

We end our discussion with two interesting questions that may serve as future research directions. 
\begin{enumerate}
    \item Does there exist a special family of pushforward maps $T_\theta$ that vanish the tensor $\chi_\theta$ and thus preserve the symplectic form?
    \item We may recast our PWHF by using the directly pull-backed symplectic matrix $\Omega(\theta, p)$, i.e., we consider the modified PWHF
    \begin{align*}
      (\dot \theta , \dot p )^\top = \Omega(\theta, p)^{-1} \nabla H(\theta, p).
    \end{align*}
    Will the above modified PWHF gain better theoretical or numerical properties?
\end{enumerate}

\bibliographystyle{siamplain}
\bibliography{references}
\end{document}


\maketitle

\section{A detailed example}

Here we include some equations and theorem-like environments to show
how these are labeled in a supplement and can be referenced from the
main text.
Consider the following equation:
\begin{equation}
  \label{eq:suppa}
  a^2 + b^2 = c^2.
\end{equation}
You can also reference equations such as \cref{eq:matrices,eq:bb} 
from the main article in this supplement.

\lipsum[100-101]

\begin{theorem}
  An example theorem.
\end{theorem}

\lipsum[102]
 
\begin{lemma}
  An example lemma.
\end{lemma}

\lipsum[103-105]

Here is an example citation: \cite{KoMa14}.

\section[Proof of Thm]{Proof of \cref{thm:bigthm}}
\label{sec:proof}
\lipsum[106-112]

\section{Additional experimental results}
\Cref{tab:foo} shows additional
supporting evidence. 

\begin{table}[htbp]
{\footnotesize
  \caption{Example table}  \label{tab:foo}
\begin{center}
  \begin{tabular}{|c|c|c|} \hline
   Species & \bf Mean & \bf Std.~Dev. \\ \hline
    1 & 3.4 & 1.2 \\
    2 & 5.4 & 0.6 \\ \hline
  \end{tabular}
\end{center}
}
\end{table}

\bibliographystyle{siamplain}
\bibliography{references}